\numberwithin{equation}{section}
\newtheorem{thm}{Theorem}[section]
\newtheorem{lem}[thm]{Lemma}
\theoremstyle{definition}
\newtheorem{definition}[thm]{Definition}
\newtheorem{rem}[thm]{Remark}
\newcommand{\Om}{\Omega}
\newcommand{\Ombar}{\overline \Omega}
\newcommand{\Ombarinf}{\Ombar \times [0, \infty)}
\newcommand{\Tmax}{T_{\mathrm{max}}}
\newcommand{\tmax}{\Tmax}
\newcommand{\f}[2]{\frac{#1}{#2}}
\newcommand{\ddt}{\frac{\mathrm{d}}{\mathrm{d}t}}
\newcommand{\io}{\int_{\Om}}
\newcommand{\intom}{\io}
\newcommand{\intntom}{\int_0^T \io}
\newcommand{\intnstom}{\int_0^t \io}
\newcommand{\intninfom}{\int_0^\infty \io}
\newcommand{\ol}{\overline}
\newcommand{\ur}[1]{\mathrm{#1}}
\newcommand{\ure}{\ur e}
\newcommand{\leb}[2][\Omega]{{L^{#2}(#1)}}
\newcommand{\sob}[3][\Omega]{{W^{#2, #3}(#1)}}
\newcommand{\con}[2][\Ombar]{{C^{#2}(#1)}}
\newcommand{\hp}{\hphantom}
\newcommand{\pe}{\mathrel{\hp{=}}}
\newcommand{\embed}{\hookrightarrow}
\newcommand{\eps}{\varepsilon}
\newcommand{\gt}{>}
\newcommand{\lt}{<}
\newcommand{\defs}{\coloneqq}
\newcommand{\sfed}{\eqqcolon}
\newcommand{\R}{\mathbb{R}}
\newcommand{\N}{\mathbb{N}}
\DeclareMathOperator{\supp}{supp}
\newcommand{\diff}{\mathrm d}
\newcommand{\dx}{\,\diff x}
\newcommand{\ds}{\,\diff s}
\newcommand{\dtau}{\,\diff \tau}
\newcommand{\ue}{u_\eps}
\newcommand{\uej}{u_{\eps_j}}
\newcommand{\uet}{u_{\eps t}}
\newcommand{\ve}{v_\eps}
\newcommand{\vej}{v_{\eps_j}}
\newcommand{\vet}{v_{\eps t}}
\newcommand{\we}{w_\eps}
\newcommand{\wej}{w_{\eps_j}}
\newcommand{\wet}{w_{\eps t}}
\newcommand{\une}{u_{0\eps}}
\newcommand{\vne}{v_{0\eps}}
\newcommand{\wne}{w_{0\eps}}
\newcommand{\wnej}{w_{0\eps_j}}
\newcommand{\sigmae}{\sigma_{\eps}}
\newcommand{\Tmaxe}{T_{\mathrm{max}, \eps}}
\newcommand{\tmaxe}{\Tmaxe}
\newcommand{\ra}{\rightarrow}
\newcommand{\sea}{\searrow}
\newcommand{\rh}{\rightharpoonup}
\newcommand{\wto}{\rightharpoonup}
\newcommand{\loc}{\mathrm{loc}}
\renewenvironment{proof}[1][\proofname]{\par
  \pushQED{\qed}%
  \normalfont \topsep0\p@\relax
  \trivlist
  \item[\hskip\labelsep\scshape
  #1\@addpunct{.}]\ignorespaces
}{%
  \popQED\endtrivlist\@endpefalse
}
\author[1]{Mario~Fuest\footnote{e-mail: fuest@ifam.uni-hannover.de, ORCID: \href{https://orcid.org/0000-0002-8471-4451}{\color{black}0000-0002-8471-4451}}}
\author[1]{Johannes~Lankeit\footnote{e-mail: lankeit@ifam.uni-hannover.de, ORCID: \href{https://orcid.org/0000-0002-2563-7759}{\color{black}0000-0002-2563-7759}}}
\affil[1]{Leibniz Universität Hannover \protect\\ Institut für Angewandte Mathematik \protect\\ Welfengarten 1, 30167 Hannover, Germany}
\title{Classical and generalized solutions of an alarm-taxis model}
\begin{document}
\date{}
\maketitle  
\begin{abstract}
\noindent
\textbf{Abstract.}
In bounded, spatially two-dimensional domains, the system
\begin{equation*}
  \left\lbrace\begin{alignedat}{3}
    u_t &= d_1 \Delta u &&                                    &&+ u(\lambda_1 - \mu_1 u - a_1 v - a_2 w), \\
    v_t &= d_2 \Delta v &&- \xi \nabla \cdot (v \nabla u)     &&+ v(\lambda_2 - \mu_2 v + b_1 u - a_3 w), \\
    w_t &= d_3 \Delta w &&- \chi \nabla \cdot (w \nabla (uv)) &&+ w(\lambda_3 - \mu_3 w + b_2 u + b_3 v),
  \end{alignedat}\right.
\end{equation*}
complemented with initial and homogeneous Neumann boundary conditions,
models the interaction between prey (with density $u$), predator (with density $v$) and superpredator (with density $w$), which preys on both other populations.
Apart from random motion and prey-tactical behavior of the primary predator, the key aspect of this system is that the secondary predator reacts to alarm calls of the prey,
issued by the latter whenever attacked by the primary predator.

We first show in the pure alarm-taxis model, i.e.\ if $\xi = 0$, that global classical solutions exist.

For the full model (with $\xi > 0$),
the taxis terms and the presence of the term $-a_2 uw$ in the first equation
apparently hinder certain bootstrap procedures, meaning that the available regularity information is rather limited.
Nonetheless, we are able to obtain global generalized solutions.
An important technical challenge is to guarantee strong convergence of (weighted) gradients of the first two solution components
in order to conclude that approximate solutions converge to a generalized solution of the limit problem.\\
\textbf{Key words:} alarm-taxis, predator-prey, prey-taxis, food-chain, classical solution, generalized solution\\
\textbf{Mathematics Subject Classification (MSC 2020):} 35K20, 92D40, 35Q92, 35A01, 35D99
\end{abstract}

\section{Introduction}
An essential step in understanding ecosystems is shedding light on the interactions of the participating species 
with one another or with the environment. The first among such interactions studied in mathematical models are predator-prey, competitive or symbiotic relations between two species, as modeled by Lotka--Volterra type systems \cite{lotka,volterra,lotka-volterra-book}, these three types differing from each other by the signs of their `reaction terms'. Incorporating spatial dependence admits the description of motion, usually in form of random motion \cite{cantrell_cosner_book,juengel} which can already alter the relative success of competing species \cite{cantrell_cosner_lou}. But also directed advances towards, e.g.\ stationary food sources (e.g.\ \cite{cantrell_cosner_lou_movement}) are treated (and shown to be advantageous). 
A particular of such sources may be given by individuals of prey, resulting in prey-taxis, as studied by Kareiva and Odell \cite{KareivaOdellSwarmsPredatorsExhibit1987} for ladybug beetles and goldenrod aphids 
and mathematically investigated (with regards to solvability in classical or weak settings) 
in, e.g.\ \cite{WuEtAlGlobalExistenceSolutions2016,JinWangGlobalStabilityPreytaxis2017, XiangGlobalDynamicsDiffusive2018,HeZhengGlobalBoundednessSolutions2015, TaoGlobalExistenceClassical2010,WinklerAsymptoticHomogenizationThreedimensional2017}.
On the opposite side of this phenomenon, the prey may react to the presence of predators by attempting to evade, leading to predator-taxis models (for a mathematical treatment, see \cite{WuEtAlDynamicsPatternFormation2018}). 

The combination of both types of taxis (predator-prey taxis, pursuit-evasion dynamics; \cite{TsyganovEtAlQuasisolitonInteractionPursuit2003,GoudonUrrutiaAnalysisKineticMacroscopic2016,TyutyunovEtAlMinimalModelPursuitEvasion2007}) leads to mathematically more challenging systems, due to their highly cross-diffusive structure. Nevertheless, some results on 
weak solvability \cite{TaoWinklerFullyCrossdiffusiveTwocomponent2021,TaoWinklerExistenceTheoryQualitative2022,FuestGlobalWeakSolutions2022} or stability of stationary states \cite{FuestGlobalSolutionsHomogeneous2020} are available. 

The step to more than two species can make a significant difference: Even in ODE models, suddenly chaotic behaviour can be observed \cite{hastings_powell}. Regarding mechanisms of interaction, it opens up further possibilities 
(besides additional sets of predator/prey relationships). For example, it becomes possible for prey to engage in anti-predator behaviour by way of attracting a secondary predator that attacks the primary predator, thus aiding the prey 
(even though, at the same time, it may still prey on the original prey).  An alarm call model capturing this behaviour has been suggested by \cite{HaskellBellModelBurglarAlarm2021}. We consider the following system (with $u$ denoting the density of prey, $v$ that of the predator und $w$ that of the superpredator): 
\begin{align}\label{system}
  \begin{cases}
    u_t = d_1 \Delta u                                     + f(u, v, w) & \text{in $\Omega \times (0, \infty)$}, \\
    v_t = d_2 \Delta v - \xi \nabla \cdot (v \nabla u)     + g(u, v, w) & \text{in $\Omega \times (0, \infty)$}, \\
    w_t = d_3 \Delta w - \chi \nabla \cdot (w \nabla (uv)) + h(u, v, w) & \text{in $\Omega \times (0, \infty)$}, \\
    \partial_\nu u = \partial_\nu v = \partial_\nu w = 0                & \text{on $\partial \Omega \times (0, \infty)$}, \\
    u(\cdot, 0) = u_0, v(\cdot, 0) = v_0, w(\cdot, 0) = w_0             & \text{in $\Omega$}
  \end{cases}  
\end{align}
with
\begin{align}
  f(u, v, w) &= u(\lambda_1 - \mu_1 u - a_1 v - a_2 w), \label{eq:intro:def_f}\\
  g(u, v, w) &= v(\lambda_2 - \mu_2 v + b_1 u - a_3 w), \label{eq:intro:def_g}\\
  h(u, v, w) &= w(\lambda_3 - \mu_3 w + b_2 u + b_3 v), \label{eq:intro:def_h}
\end{align}
where
\begin{align}\label{eq:intro:params}
  d_i, \chi, \lambda_i, \mu_i, a_i, b_i > 0 \text{ for  } i \in \{1, 2, 3\} \quad \text{and} \quad \xi \ge 0.
\end{align}
The most notable feature of \eqref{system} is the alarm-taxis term, $-χ∇\cdot(w∇(uv))$. Compared to other taxis models, the `signal' therein consists in the nonlinear expression $uv$ arising from the clash of prey and primary predator. Another key challenge for the mathematical analysis stems from the fact that the directed motion of the secondary predator is (at least partially) influenced by another component which itself undergoes some taxis, a feature \eqref{system} shares with certain food-chain models (e.g.\ \cite{JinWangWu,JinLuZou,LiWang_Threespecies}) and forager-exploiter systems (cf.\ \cite{forager_exploiter_win,forager_exploiter_taowin,forager_exploiter_black,forager_exploiter_wangwang,forager_exploiter_liuzhuang,forager_exploiter_liwang}, for instance). 

For one-dimensional domains, global existence of classical solutions to \eqref{system} was established in \cite{HaskellBellModelBurglarAlarm2021}, where also pattern formation was discussed. 

In two-dimensional settings, replacing the summand $w(1-w)$ in $h$ by $w(1-w^{\sigma})$ for any $\sigma > 1$ ensures global  boundedness of solutions \cite{LiWangGlobalDynamicsPredatorprey2023}, but for $\sigma=1$, a crucial gradient estimate no longer works (cf.\ \cite[Remark 1.2]{LiWangGlobalDynamicsPredatorprey2023}). If, instead, the term $uw$ in $f$ and $h$ is replaced by a functional response of the form $\f{uw}{u+w}$, \cite{JinEtAlGlobalSolvabilityStability2023} proves global boundedness of classical solutions. 
In both of these settings, smallness of the taxis coefficients $\xi$ and $\chi$ leads to exponential convergence of solutions to a constant steady state.

Higher-dimensional domains have been treated in \cite{ZhangXuXin} (it seems worth noting that also three-dimensional domains are biologically relevant; for an overview of alarm signals in aquatic ecosystems, see e.g.\ \cite{ferrari_wisenden_chivers}) and it was shown there that the system admits globally bounded classical solutions as long as the coefficients of the logistic terms are sufficiently large.

At first glance, the main difference between \eqref{system} and the three-species food-chain model in \cite{JinWangWu} seems to be that between alarm-taxis and prey-taxis in the third equation. Actually, however, at least for questions of global solvability, this inclusion of another function in the taxis term does not constitute an overwhelmingly large difference -- after all, this function is rather soon seen to be bounded. A larger change is the interaction between first and third trophic level, i.e.\ the insertion of the $-uw$ term in the first equation.
Its absence enables the authors of \cite{JinWangWu} to apply suitable testing procedures eventually obtain uniform-in-time $\leb4$ bounds for $\nabla u$,
which, as $4$ is larger than the spatial dimension of $\Omega$, allows for semigroup arguments yielding boundedness of $v$, even without $g$ containing logistic terms. 
Biologically, this term models a direct predator-prey relationship between secondary predator and prey
(that is, its presence turns ``the prey calls for help'' into ``the prey calls for help, despite detrimental effects'').

The full coupling term $uw$ is contained in the alarm-taxis model in \cite{LiWangGlobalDynamicsPredatorprey2023} and the food-chain model in \cite{XuYangXin}, but both results require stronger than quadratic absorption terms in the equation for $w$.

A weaker form of this effect is considered for instance in the above-mentioned \cite{JinEtAlGlobalSolvabilityStability2023},
where an alarm-taxis system with `ratio-dependent' functional response ($\f{uw}{u+w}$ instead of $uw$ in \eqref{system}) is studied.
However, although such a term can be biologically motivated, mathematically the change from $uw$ is rather drastic.
After all, it enables the estimate $|\f{uw}{u+w}|\le u$ and due to a~priori estimates for $u$ and $v$ (see Lemma~\ref{lm:ue_bdds} and Lemma~\ref{lm:ve_bdds}), 
the function $f(u, v, w)$ is readily seen to be uniformly in time bounded in $\leb2$, which in turn implies an $L^\infty$-$L^4$ bound for $\nabla u$ and then, as above, boundedness of $v$
(cf.\ \cite[Lemma~3.3 and Lemma~3.5]{JinEtAlGlobalSolvabilityStability2023}).
In contrast, for \eqref{system}, we are only able to bound $\nabla u$ in $L^\infty$-$L^2$, which appears to be (barely) insufficient to start any bootstrap procedures for $v$.

The results of the present article are twofold. First, we consider a pure alarm-taxis system without prey-tactic effects on the intermediate trophic level. Here it turns out that classical solutions are global.
\begin{thm}\label{th:xi=0}
  Suppose that $\Omega \subset \R^2$ is a smooth, bounded domain and that \eqref{eq:intro:def_f}--\eqref{eq:intro:params} are fulfilled, and let
  \begin{align}\label{eq:intro:init}
  0 \le u_0 \in \bigcup_{\theta > 2} \sob1\theta, \quad
  0 \le v_0 \in \bigcup_{\theta > 2} \sob1\theta
  \quad \text{and} \quad
  0 \le w_0 \in \con0.
  \end{align}
  If $\xi = 0$, then there exists a global classical solution $(u, v, w)$ of \eqref{system}.
\end{thm}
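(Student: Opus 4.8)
The plan is to fix an arbitrary $T\in(0,\Tmax)$, where $\Tmax\in(0,\infty]$ denotes the maximal existence time of the local classical solution $(u,v,w)$ provided by a standard fixed point argument, and to derive bounds on $\Om\times(0,T)$ -- depending only on $T$ and the data -- that are incompatible with the blow-up alternative of that local existence statement unless $\Tmax=\infty$. First I would collect the elementary a priori information. Since $u,v,w\ge0$ by the comparison principle and $u_0,v_0\in\con0$ by the embedding $\sob1\theta\embed\con0$, and since $u_t\le d_1\Delta u+u(\lambda_1-\mu_1 u)$ and, consequently, $v_t\le d_2\Delta v+v(\lambda_2+b_1\norm[\leb\infty]{u}-\mu_2 v)$, the comparison principle shows that $u$ and $v$ are bounded in $\leb\infty$ uniformly in time (these are Lemma~\ref{lm:ue_bdds} and Lemma~\ref{lm:ve_bdds}). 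Testing the $w$-equation by $1$ removes the taxis term, and the boundedness of $u,v$ then yields $\sup_{t<T}\io w<\infty$ and $\int_0^T\io w^2<\infty$. Testing the $u$-equation by $-\Delta u$ and estimating the reaction term through $|f(u,v,w)|\le C(1+w)$ together with $\int_0^T\io w^2<\infty$ gives $\na u\in L^\infty((0,T);\leb2)$ and $\Delta u\in L^2(\Om\times(0,T))$; by elliptic regularity $u\in L^2((0,T);\sob22)$, and an interpolation then yields $\na u\in L^4(\Om\times(0,T))$. The same applies to $v$, so that $\Delta(uv)=v\Delta u+2\na u\cdot\na v+u\Delta v\in L^2(\Om\times(0,T))$.

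The crucial step is to upgrade $w$ to $L^\infty((0,T);\leb p)$ for every $p<\infty$. Here I would test the $w$-equation by $w^{p-1}$. The key observation is that the taxis contribution should \emph{not} be handled in its ``natural'' form $\io w^p|\na(uv)|^2$ -- for which only $\na(uv)\in L^\infty((0,T);\leb q)$ with $q<2$ is available a priori, which is borderline insufficient -- but rather integrated by parts: since $\delny(uv)=v\,\delny u+u\,\delny v=0$ on $\dOm$,
\begin{equation*}
  \chi(p-1)\io w^{p-1}\na w\cdot\na(uv)=\f{\chi(p-1)}{p}\io\na(w^p)\cdot\na(uv)=-\f{\chi(p-1)}{p}\io w^p\,\Delta(uv),
\end{equation*}
which can now be controlled using the $L^2$-information on $\Delta(uv)$. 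Combining $\io w^p|\Delta(uv)|\le\norm[\leb2]{\Delta(uv)}\norm[\leb4]{w^{p/2}}^2$ with the two-dimensional Gagliardo--Nirenberg inequality $\norm[\leb4]{w^{p/2}}^2\le C\norm[\leb2]{\na w^{p/2}}\norm[\leb2]{w^{p/2}}+C\norm[\leb2]{w^{p/2}}^2$ and Young's inequality, one absorbs the dissipation coming from $-\Delta w$ and is left with a differential inequality $\ddt\io w^p\le G(t)\io w^p$, where $G(t)=C\kl{\norm[\leb2]{\Delta(uv)(\cdot,t)}^2+\norm[\leb2]{\Delta(uv)(\cdot,t)}+1}\in L^1((0,T))$. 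Since $\io w^p(\cdot,0)=\io w_0^p<\infty$, Grönwall's lemma yields $\sup_{t<T}\io w^p<\infty$ (the logistic term $-\mu_3 w^2$ provides additional room but is not needed in this step).

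From here the argument is a routine bootstrap. With $w\in L^\infty((0,T);\leb p)$ for all $p<\infty$, the reaction terms $f$ and $g$ lie in $L^\infty((0,T);\leb p)$ for all $p<\infty$, so applying smoothing estimates for the Neumann heat semigroup to the variation-of-constants representations of $u$ and $v$ yields $u,v\in C^{1+\alpha,(1+\alpha)/2}(\Ombar\times[0,T])$, and hence $\na(uv)\in L^\infty(\Om\times(0,T))$. Inserting this -- together with the fact that the remaining terms of the $w$-equation are bounded in every $\leb p$ -- into the variation-of-constants representation of $w$ gives $w\in L^\infty(\Om\times(0,T))$. With all three components bounded, De~Giorgi--Nash--Moser theory applied to the divergence-form equation for $w$ yields $w\in C^{\alpha,\alpha/2}$, after which Schauder theory applied successively to the (now Hölder) equations for $u$, $v$ and then (in nondivergence form) for $w$ produces the claimed classical solution on $\Ombar\times[0,T]$. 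As $T<\Tmax$ was arbitrary and all bounds depend only on $T$ and the data, the extensibility criterion forces $\Tmax=\infty$.

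I expect the main obstacle to be precisely the second step above: the available a priori estimates pin $\na u$, $\na v$, and hence $\na(uv)$, only in $L^\infty_t\leb2$, which is critical rather than subcritical for the taxis term in the $w$-equation, so that a straightforward $L^p$-testing procedure stalls. It is the integration-by-parts identity -- legitimate thanks to the homogeneous Neumann conditions satisfied by $u$ and $v$, and converting the problematic quantity into one controllable by $\Delta(uv)\in L^2(\Om\times(0,T))$ -- together with the resulting Grönwall structure that lets the otherwise borderline bootstrap go through.
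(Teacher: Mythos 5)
Your argument is correct and follows the same overall skeleton as the paper (comparison principle for $\|u\|_{L^\infty}$, $\|v\|_{L^\infty}$; testing the $u$- and $v$-equations with the Laplacian to get $\nabla u,\nabla v\in L^\infty_tL^2$ and $\Delta u,\Delta v\in L^2_{t,x}$; a Gagliardo--Nirenberg/Gr\"onwall step for $w$; a semigroup bootstrap; contradiction with the extensibility criterion), but your treatment of the alarm-taxis term is genuinely different in its details. The paper only establishes $w\in L^\infty_tL^2$ (Lemma~\ref{lm:w_l2}), by applying Lemma~\ref{lm:gen_l2_bdd} with $\psi_1=\chi uv$: there the taxis contribution is kept in the form $\int w^2|\nabla(uv)|^2$ and controlled by the combined Gagliardo--Nirenberg estimate \eqref{eq:gni_comb:comb}; the price is that the final $L^\infty$ bound for $w$ in Lemma~\ref{lm:w_linfty} needs a more careful variation-of-constants argument with the interpolation $\|w\|_{L^r}\le\|w\|_{L^\infty}^{1-1/r}\|w\|_{L^1}^{1/r}$ and the self-improving inequality $M(t)\le c_7+c_7M^{1-1/\max\{4,r\}}(t)$. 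You instead first upgrade $w$ to $L^\infty_tL^p$ for every finite $p$ by testing with $w^{p-1}$ and integrating the taxis term by parts onto $\Delta(uv)$ (legitimate since $\partial_\nu(uv)=0$), which requires the extra -- correctly supplied -- ingredient $\nabla u,\nabla v\in L^4_{t,x}$ (via \eqref{eq:gni_comb:lady}) to ensure $\Delta(uv)\in L^2_{t,x}$; after that, your Duhamel step for $\|w\|_{L^\infty}$ is simpler, since $w\nabla(uv)$ is already bounded in $L^\infty_tL^q$ for some $q>2$. Both routes rest on the same analytic core (2D Gagliardo--Nirenberg plus space-time $L^2$ control of second derivatives of $u$, $v$, plus Gr\"onwall), so the difference is one of bookkeeping rather than of substance; your version buys a cleaner endgame at the cost of an additional product-rule/interpolation step. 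Two minor inaccuracies that do not affect the argument: with $u_0,v_0$ merely in $W^{1,\theta}$, you cannot claim $u,v\in C^{1+\alpha,(1+\alpha)/2}(\Ombar\times[0,T])$ uniformly up to $t=0$, but you only need $L^\infty((0,T);W^{1,\theta})$ bounds (exactly what the paper derives in \eqref{eq:w_linfty:u_est}--\eqref{eq:w_linfty:v_est}, and what your semigroup estimates actually give), so nothing is lost; and the concluding De~Giorgi--Nash--Moser/Schauder step is superfluous, since the extensibility criterion \eqref{eq:local_ex:ext} is already contradicted by the $W^{1,q}$ bounds for $u,v$ and the $L^\infty$ bound for $w$.
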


For the full model, including prey-taxis of the primary and alarm-taxis of the secondary predator, we obtain global generalized solutions under mere positivity assumptions (and without largeness conditions) on the parameters.

\begin{thm}\label{th:xi>0}
  Suppose that $\Omega \subset \R^2$ is a smooth, bounded domain and that \eqref{eq:intro:def_f}--\eqref{eq:intro:params} are fulfilled, and let $0 \le u_0 \in \sob12$, $0 \le v_0 \in \leb2$ and $0 \le w_0 \in \leb1$.
  If $\xi > 0$, then there exists a global generalized solution $(u, v, w)$ of \eqref{system} in the sense of Definition~\ref{def:sol_concept} below.
\end{thm}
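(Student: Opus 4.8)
The plan is to obtain the generalized solution as a limit of classical solutions of suitably regularized problems, along the familiar scheme \emph{regularization $\to$ $\eps$-independent estimates $\to$ compactness $\to$ limit passage}, with essentially all the difficulty concentrated in a gradient-compactness argument.

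First I would fix $(\eps_j)_{j\in\N}\subset(0,1)$ with $\eps_j\sea0$, replace $u_0,v_0,w_0$ by nonnegative smooth data $\une,\vne,\wne$ converging in $\sob12$, $\leb2$, $\leb1$, and regularize the cross-diffusive terms, for instance replacing $\nabla\ue$ and $\nabla(\ue\ve)$ by $\psie(\ve)\nabla\ue$ and $\psie(\we)\nabla(\ue\ve)$ with $\psie(s)=(1+\eps s)^{-1}$ (so that, for instance, $\ve\psie(\ve)$ is bounded); then, by standard parabolic theory as used for Theorem~\ref{th:xi=0}, one obtains componentwise nonnegative global classical solutions $(\ue,\ve,\we)$ for each such $\eps$. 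For the $\eps$-independent estimates, Lemma~\ref{lm:ue_bdds} bounds $\ue$ in $L^\infty(\Om\times(0,\infty))$, while Lemma~\ref{lm:ve_bdds} and an analogous statement for $\we$, combined with the logistic absorption, bound $\ve,\we$ in $L^\infty((0,\infty);\Lom1)$ and in $L^2_\loc([0,\infty);\Lom2)$. Testing the first equation with $-\Delta\ue$ controls $\nabla\ue$ in $L^\infty_\loc([0,\infty);\Lom2)$ and $\Delta\ue$ in $L^2_\loc$, hence $\ue$ in $L^2_\loc(\sob22)$, and thus, by two-dimensional embeddings and interpolation, $\nabla\ue$ in $L^{4-\delta}_\loc(\Om\times(0,\infty))$ for each $\delta\gt0$. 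Testing the second and third equations with $\tfrac1{\ve+1}$, $\tfrac1{\we+1}$ and with $(\ve+1)^{\alpha-1}$, $(\we+1)^{\alpha-1}$ for small $\alpha\gt0$ bounds $\nabla\ln(\ve+1)$, $\nabla\ln(\we+1)$ in $L^2_\loc$ and then $\nabla\ve$, $\nabla\we$ in $L^{1+\delta}_\loc$ for some $\delta\gt0$; reading off the equations also bounds $\uet,\vet,\wet$ in spaces of negative differentiability order.

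By the Aubin--Lions lemma, a diagonal procedure over exhausting time intervals, and possibly passing to a further subsequence, one finds $(u,v,w)$ with $\ue\to u$, $\ve\to v$, $\we\to w$ almost everywhere in $\Om\times(0,\infty)$ and strongly in suitable $L^p_\loc$, while $\nabla\ue\wto\nabla u$ and $\nabla\ln(\ve+1)\wto\nabla\ln(v+1)$ in $L^2_\loc$ together with the analogous weak limits for the remaining gradient quantities. The core of the proof is to upgrade the weak convergence of $\nabla\ue$ to strong convergence in $L^2_\loc(\Om\times(0,\infty))$. Since no taxis term occurs in the first equation, these convergences already let one pass to the limit there and identify $u$ as a weak solution of $u_t=d_1\Delta u + f(u,v,w)$, with enough regularity ($u\in L^\infty\cap L^2_\loc(\sob12)$, $u_t\in L^2_\loc$) for the energy identity $\tfrac12\|u(t)\|_{\leb2}^2 + d_1\int_0^t\intom|\nabla u|^2 = \tfrac12\|u_0\|_{\leb2}^2 + \int_0^t\intom u\,f(u,v,w)$ to hold; writing the same identity for each $\ue$ and letting $\eps\sea0$ — the reaction integral converging by Vitali's theorem (using the $L^\infty$-bound for $\ue$, the $L^2_\loc$-bounds for $\ve,\we$ and a.e.\ convergence), and $\|\ue(t)\|_{\leb2}\to\|u(t)\|_{\leb2}$ following from compactness in $C^0_\loc([0,\infty);\leb2)$ — gives $\int_0^t\intom|\nabla\ue|^2\to\int_0^t\intom|\nabla u|^2$, which with the weak convergence forces strong convergence of $\nabla\ue$ in the Hilbert space $L^2((0,t)\times\Om)$.

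For the \emph{weighted} gradient of $v$ no such energy \emph{identity} is at hand — which is exactly where the non-variational, cross-diffusive structure bites — so instead I would test the regularized second equation with a carefully chosen (concave) function of $\ve$, use the strong convergence of $\nabla\ue$ just obtained to pass to the limit in the resulting taxis cross term $\int\int\psie(\ve)\tfrac{\nabla\ue\cdot\nabla\ve}{\ve+1}$ (a product of one strongly and one weakly convergent factor), and sandwich the ensuing $\limsup$-inequality against weak lower semicontinuity of $\int\int\tfrac{|\nabla\ve|^2}{(\ve+1)^2}$ to pin down its limit, thereby promoting $\nabla\ln(\ve+1)$ to strong convergence in $L^2_\loc$; consequently $\tfrac{\ve}{\ve+1}\nabla\ue$, $\nabla\ve$, and hence $\nabla(\ue\ve)=\ve\nabla\ue+\ue\nabla\ve$ converge in the relevant $L^{1+\delta}_\loc$-type topologies. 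The final step is then to let $\eps=\eps_j\sea0$ in the weak formulation of the first equation (immediate from the above), in the (logarithmically) transformed supersolution inequalities for the second and third equations prescribed by Definition~\ref{def:sol_concept} — where the dissipation terms pass by weak lower semicontinuity (their sign being favorable), the taxis cross terms by the strong gradient convergences just obtained, and the zeroth-order terms by Vitali, crucially exploiting that $-\mu_2 v^2$ and $-\mu_3 w^2$ carry the right sign and that $v,w\in L^2_\loc$ — and in the mass-type inequalities for $\intom(v+w)$ (by Fatou). I expect this last stage, and within it the upgrade from weak to strong convergence of the weighted gradient of $v$ in the absence of the usual energy-equality trick, to be the principal obstacle of the whole argument.
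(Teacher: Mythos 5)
Your overall scheme coincides with the paper's: regularize the taxis coefficients, approximate the data, derive the same family of $\eps$-independent bounds ($\|\ue\|_{L^\infty}$, $\nabla\ue\in L^\infty_tL^2_x$, $\Delta\ue\in L^2$, $\ve\in L^\infty_tL^2_x$ with $\nabla\ve\in L^2$, $\nabla\ln(\we+1)\in L^2$, time-derivative bounds), use Aubin--Lions, upgrade $\nabla\ue$ to strong $L^2_\loc$ convergence (your energy-identity route is fine; the paper notes the even shorter alternative via $\ue\to u$ in $L^2$ and $\Delta\ue\rightharpoonup\Delta u$), and then pass to the limit in \eqref{eq:sol_concept:u_weak}, \eqref{eq:sol_concept:v_weak}, \eqref{eq:sol_concept:vw_phi_supersol} and \eqref{eq:sol_def:mass_ineq}. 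The one place where you genuinely deviate is the step you yourself single out as the principal obstacle: the paper obtains the strong convergence of $\mathds 1_{\{\ve\le k\}}\nabla\ve$ by verifying an $L^{6/5}$ bound for the right-hand side of the $\ve$-equation (Lemma~\ref{lm:nabla_ve_65}) and then invoking the black-box strong-convergence theorems of \cite{FuestStrongConvergenceWeighted2023} (Lemma~\ref{lm:strong_grad_conv}), whereas you propose to reprove this by hand via a concave-testing ``sandwich''.

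Here your sketch has a real gap. Testing the approximate $\ve$-equation with, say, $\tfrac1{1+\ve}$ and letting $\eps\searrow0$ yields only an upper bound of the form $d_2\,\limsup\intntom\frac{|\nabla\ve|^2}{(1+\ve)^2}\le\intom\ln(1+v(\cdot,T))-\intom\ln(1+v_0)+\xi\intntom\frac{v\,\nabla u\cdot\nabla v}{(1+v)^2}-\intntom\frac{g(u,v,w)}{1+v}$, while weak lower semicontinuity gives the opposite bound with $\intntom\frac{|\nabla v|^2}{(1+v)^2}$ on the left. These two only ``pin down'' the limit if you additionally prove that the right-hand side of the first inequality equals (or is at most) $d_2\intntom\frac{|\nabla v|^2}{(1+v)^2}$, i.e.\ you must rigorously test the \emph{limit} equation for $v$ — known only as a weak solution with $v_t$ in a dual space, fluxes in $L^2$ and a source merely in $L^1$ — with the nonlinear function $\tfrac1{1+v}$, including a chain rule for $t\mapsto\intom\ln(1+v(\cdot,t))$ and attainment of the initial trace. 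This renormalization step at low regularity is precisely the nontrivial content of the cited theorem and is not addressed in your proposal; without it the sandwich does not close. Two further, more minor, points: your claim that testing with $(\we+1)^{\alpha-1}$ yields $\nabla\we\in L^{1+\delta}_\loc$ is not justified by the available bounds (the resulting cross term requires control of $\intntom(\we+1)^\alpha|\nabla(\ue\ve)|^2$, which the uniform $L^2$ bound on $\nabla(\ue\ve)$ does not give) — but it is also unnecessary, since Definition~\ref{def:sol_concept} only asks for $\mathds 1_{\{w\le k\}}\nabla w$, handled through $\nabla\ln(w+1)$; and with your regularization $s\mapsto s/(1+\eps s)$, which is bounded but does not vanish for large arguments, the global existence of the approximate solutions needs a different (semigroup/bootstrap) argument than the comparison used for the paper's cut-off $\sigmae$.
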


The solution concept will be made precise in Definition~\ref{def:sol_concept}. It is a relative of renormalized solutions, as established in the context of the Boltzmann equation in \cite{diperna_lions} and used in context of chemotaxis models (see, e.g.\ the survey \cite{lanwin_facing_low_regularity} or \cite[Section~1.2]{FuestStrongConvergenceWeighted2023} for an overview).

\paragraph{Plan of the paper.}
We start by stating a local existence result and collecting first basic estimates in Section~\ref{sec:prelim}.
As observed at the beginning of Section~\ref{sec:xi=0}, for $\xi = 0$ the system \eqref{system} resembles a chemotaxis-consumption system with a logistic source term,
for which global existence of classical solutions in two-dimensional settings is to be expected.
That is, by choosing a proof which does not rely on intricate energy-type arguments based on cancellations of worrisome terms, we rather rapidly obtain Theorem~\ref{th:xi=0}.

The proof of Theorem~\ref{th:xi>0} is considerably more delicate.
Mainly due to the fact that $u$ and $uv$ are contained in the cross-diffusive terms in the second and third equation in \eqref{system}, respectively,
the available a~priori estimates for $u$, $v$ and $w$ get steadily worse.
While the quadratic dampening term in the (second and) third equation implies a uniform-in-time bound for $\intom |\nabla v|^2$,
the exponent $2$ does not exceed the spatial dimension and hence well-established semigroup arguments yielding boundedness of $v$ seem to be unavailable.
Nonetheless, in Subsection~\ref{lm:sol_concept_consistent} we are able to collect sufficient a~priori estimates
to conclude that a pointwise limit of solutions to approximate problems is sufficiently regular for all integral terms in  Definition~\ref{def:sol_concept} of generalized solutions to make sense.

However, as some of the integrands there are quadratic in $(\nabla u, \nabla v, \nabla w)$, the weak convergence obtained by boundedness in certain reflexive spaces turns out to be insufficient.
To overcome this issue in Subsection~\ref{sec:xi>0:eps_sea_0}, we crucially make use of the main results in \cite{FuestStrongConvergenceWeighted2023}
which inter alia assert \emph{strong} convergence of weighted gradients as long as the right-hand side of the considered parabolic equation converges weakly in space-time $L^1$.
This allows us prove in Subsection~\ref{sec:xi>0:proof_thm} that the limit $(u, v, w)$ is indeed a global generalized solution of \eqref{system}, i.e.\ to prove Theorem~\ref{th:xi>0}.

\section{Preliminaries}\label{sec:prelim}
As first step, we state a local existence result, including an extensibility criterion. The function $\sigma$ is included so that we can use the same lemma for both the classical solutions of Theorem~\ref{th:xi=0} and for solutions to the approximate system used during the proof of Theorem~\ref{th:xi>0} in Section~\ref{sec:xi>0}.

\begin{lem}\label{lm:local_ex}
  Let $\Omega \subset \R^2$ be a smooth, bounded domain, $f, g, h$ as in \eqref{eq:intro:def_f}--\eqref{eq:intro:def_h}, $d_i, \chi, \xi, \lambda_i, a_i, b_i$ as in \eqref{eq:intro:params},
  $\sigma \in C^\infty([0, \infty))$ and $0 \le u_0 \in \sob1\theta$, $0 \le v_0 \in \sob1\theta$, $0 \le w_0 \in \con0$ for some $\theta > 2$.
  Then there exists $\tmax = \tmax(u_0, v_0, w_0) \in (0, \infty]$ and a nonnegative, unique maximal classical solution
  \begin{align}\label{eq:local_ex:reg}
    (u, v, w) \in \big(C^{2, 1}(\Ombar \times [0, \tmax))\big)^3 \times C^0([0, \tmax); \sob1\theta \times \sob1\theta \times \con0)
  \end{align}
  of
  \begin{align}\label{prob:gen}
    \begin{cases}
      u_t = d_1 \Delta u                                              + f(u, v, w) & \text{in $\Omega \times (0, \tmax)$}, \\
      v_t = d_2 \Delta v - \xi \nabla \cdot (\sigma(v) \nabla u)      + g(u, v, w) & \text{in $\Omega \times (0, \tmax)$}, \\
      w_t = d_3 \Delta w - \chi \nabla \cdot (\sigma(w) \nabla (u v)) + h(u, v, w) & \text{in $\Omega \times (0, \tmax)$}, \\
      \partial_\nu u = \partial_\nu v = \partial_\nu w = 0                         & \text{on $\partial \Omega \times (0, \tmax)$}, \\
      u(\cdot, 0) = u_0, v(\cdot, 0) = v_0, w(\cdot, 0) = w_0                      & \text{in $\Omega$}.
    \end{cases}  
  \end{align}
  Moreover, $\tmax < \infty$ implies
  \begin{align}\label{eq:local_ex:ext}
    \limsup_{t \nearrow \tmax} \left( \|u(\cdot, t)\|_{\sob1q} + \|v(\cdot, t)\|_{\sob1q} + \|w(\cdot, t)\|_{\con0} \right) = \infty
    \qquad \text{ for all $q > 2$}.
  \end{align}
\end{lem}
\begin{proof}
  This follows by a straightforward adaptation of well-established fixed point arguments as employed for instance in \cite[Lemma 3.1]{BellomoEtAlMathematicalTheoryKeller2015}.
\end{proof}

Essentially because $f$, $g$ and $h$ model the interaction of a food chain, there is a positive linear combination of these functions which grows at most linearly.
Thus, as first yet very basic a~priori estimates for solutions of \eqref{prob:gen} we obtain uniform-in-time $L^1$ bounds and space-time $L^2$-$L^2$ bounds.
\begin{lem}\label{lm:u_v_w_l1}
  Let $\Omega \subset \R^2$ be a smooth, bounded domain, $f, g, h$ as in \eqref{eq:intro:def_f}--\eqref{eq:intro:def_h},
  $d_i, \chi, \xi, \lambda_i, a_i, b_i$ as in \eqref{eq:intro:params}, $T_0 > 0$ and $M > 0$.
  Then there exists $C > 0$ such that for any $\sigma \in C^\infty([0, \infty))$ with $0 \le \sigma \le \mathrm{id}$
  and any nonnegative $u_0 \in \bigcup_{\theta > 2} \sob1\theta$, $v_0 \in \bigcup_{\theta > 2} \sob1\theta$, $w_0 \in \con0$ with
  \begin{align*}
    \intom ( u_0 + v_0 + w_0 ) \le M,
  \end{align*}
  the solution $(u, v, w)$ of \eqref{prob:gen} given by Lemma~\ref{lm:local_ex} fulfils
  \begin{align}\label{eq:u_v_w_l1:l1}
    \sup_{t \in (0, T)} \left( \intom u(\cdot, t) + \intom v(\cdot, t) + \intom w(\cdot, t) \right) \le C
  \end{align}
  and
  \begin{align}\label{eq:u_v_w_l1:l2l2}
    \intntom u^2 + \intntom v^2 + \intntom w^2 \le C,
  \end{align}
  where $T \defs \min\{T_0, \tmax(u_0, v_0, w_0)\}$.
\end{lem}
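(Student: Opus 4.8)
The plan is to find a suitable positive linear combination $z \defs \alpha u + \beta v + w$ (with $\alpha, \beta > 0$ to be chosen) whose evolution admits a favourable absorptive structure. Adding $\alpha$ times the first equation, $\beta$ times the second and the third equation in \eqref{prob:gen}, all divergence (diffusion and taxis) terms integrate to zero over $\Omega$ thanks to the homogeneous Neumann boundary conditions, so testing with $1$ gives
\begin{align*}
  \ddt \intom z = \intom \big( \alpha f(u,v,w) + \beta g(u,v,w) + h(u,v,w) \big).
\end{align*}
The key observation is that, because this models a food chain, the coupling terms can be made to cancel: the term $+b_1 \beta uv$ coming from $\beta g$ cancels $-a_1 \alpha uv$ from $\alpha f$ provided $b_1 \beta = a_1 \alpha$; similarly $+b_2 uw$ and $+b_3 vw$ from $h$ are dominated by $-a_2 \alpha uw$ and $-a_3 \beta vw$ from $\alpha f$ and $\beta g$ once $\alpha$ and $\beta$ are large enough (e.g.\ $a_2\alpha \ge b_2$ and $a_3 \beta \ge b_3$). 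With such a choice the right-hand side is bounded above by $\alpha f + \beta g + h \le C_1(\alpha u + \beta v + w) - C_2(\alpha \mu_1 u^2 + \beta \mu_2 v^2 + \mu_3 w^2)$ for suitable $C_1, C_2 > 0$ depending only on the parameters; here I use that each of $\lambda_i z_i - \mu_i z_i^2$ (with $z_i \in \{u,v,w\}$) is bounded by a multiple of $z_i$ minus a multiple of $z_i^2$.

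From $\ddt \intom z \le C_1 \intom z - C_2 \intom (\cdots)$ and the elementary inequality $\intom z \le |\Omega|^{1/2} (\intom z^2)^{1/2}$, an ODE comparison argument (absorbing the linear term into the quadratic one and a constant) yields $\ddt \intom z \le C_3 - C_4 \intom z$ for constants depending only on the parameters and $|\Omega|$. Since $\intom z(\cdot,0) \le \max\{\alpha,\beta,1\} M$, Grönwall's inequality gives the uniform-in-time $L^1$ bound \eqref{eq:u_v_w_l1:l1}, with a constant independent of $T_0$ (and in fact of $T_0$ altogether, which is even a little stronger than claimed). For \eqref{eq:u_v_w_l1:l2l2}, I integrate the differential inequality $\ddt \intom z + C_2 \intom(\alpha\mu_1 u^2 + \beta\mu_2 v^2 + \mu_3 w^2) \le C_1 \intom z$ over $(0,T)$, using the already-established $L^1$ bound on the right and nonnegativity of $\intom z(\cdot,T)$ on the left; this controls $\int_0^T \intom u^2$, $\int_0^T \intom v^2$ and $\int_0^T \intom w^2$ on the interval $(0,T)$ with $T \le T_0$, giving a constant that does depend on $T_0$.

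The only genuinely delicate point is bookkeeping: one must check that the constants can be chosen uniformly over all admissible $\sigma$ (which is immediate, since $\sigma$ enters only through divergence terms that vanish upon integration and $0 \le \sigma \le \mathrm{id}$ plays no role here) and over all admissible initial data with the prescribed mass bound. There is no real analytic obstacle — the argument is the standard ``quasi-energy in $L^1$'' computation for logistic-type systems — so the main work is simply verifying that the food-chain sign pattern of $f,g,h$ permits the cancellation $b_1\beta = a_1\alpha$ together with the domination of $b_2, b_3$, which it does for every parameter choice satisfying \eqref{eq:intro:params}.
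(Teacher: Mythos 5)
Your proposal is correct and follows essentially the same route as the paper: the paper tests with the combination $u + \eta_1 v + \eta_2 w$ (small weights $\eta_1,\eta_2$ chosen so that $b_1\eta_1\le a_1$, $b_2\eta_2\le a_2$, $b_3\eta_2\le a_3\eta_1$), which is just your $\alpha u+\beta v+w$ rescaled, and then likewise uses the sign of the cross terms, a logistic ODE comparison for \eqref{eq:u_v_w_l1:l1} and a time integration of the same differential inequality for \eqref{eq:u_v_w_l1:l2l2}. The only cosmetic differences are your choice of large weights versus the paper's small ones and the intermediate reduction to $\ddt\intom z\le C_3-C_4\intom z$, which the paper skips by comparing directly with the logistic ODE.
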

\begin{proof}
  We choose $\eta_1, \eta_2 \in (0, 1)$ with $b_1 \eta_1 \le a_1$, $b_2 \eta_2 \le a_2$ and $b_3 \eta_2 \le a_3 \eta_1$
  and set $\lambda \defs \max\{\lambda_1, \lambda_2, \lambda_3\}$ and $\mu \defs \min \{\mu_1, \mu_2, \mu_3\}$.
  Then two applications of Jensen's inequality imply
  \begin{align}\label{eq:u_v_w_l1:testing}
          \ddt \intom (u + \eta_1 v + \eta_2 w )
    &=    \lambda \intom (u + \eta_1 v + \eta_2 w )
          - \mu \intom  (u^2 + \eta_1 v^2 + \eta_2 w^2 ) \notag \\
    &\pe  + \intom (-a_1 uv - a_2 uw + b_1\eta_1 uv - a_3\eta_1 vw + b_2 \eta_2 uw + b_3 \eta_2 vw ) \notag \\
    &\le  \lambda \intom (u + \eta_1 v + \eta_2 w )
          - \frac{\mu}{3} \intom ( u + \eta_1 v + \eta_2 w )^2 \notag \\
    &\le  \lambda \intom (u + \eta_1 v + \eta_2 w ) 
          - \frac{\mu}{3|\Omega|} \left(\intom (u + \eta_1 v + \eta_2 w ) \right)^2
  \end{align}
  in $[0, T)$.
  Thus, by an ODE comparison argument,
  \begin{align*}
        \intom (u + \eta_1 v + \eta_2 w )
    \le \max\left\{\intom (u_0 + \eta_1 v_0 + \eta_2 w_0 ), \frac{3\lambda |\Omega|}{\mu}\right\}
    \le \max\left\{M, \frac{3\lambda |\Omega|}{\mu}\right\}
    \sfed c_1
  \end{align*}
  in $(0, T)$.
  Moreover, integrating the first inequality in \eqref{eq:u_v_w_l1:testing} in time
  shows
  \begin{align*}
          \mu \int_0^T \intom (u^2 + \eta_1 v^2 + \eta_2 w^2 )
    &\le  c_1 + \lambda c_1 T,
  \end{align*}
  so that \eqref{eq:u_v_w_l1:l1} and \eqref{eq:u_v_w_l1:l2l2} hold with $C \defs \frac{\max\{c_1, \frac{1}{\mu}(c_1 + \lambda c_1 T)\}}{\min\{\eta_1, \eta_2\}}$.
\end{proof}

Next, we state two consequences of the Gagliardo--Nirenberg inequality.
\begin{lem}\label{lm:gni_comb}
  Let $\Omega \subset \R^2$ be a smooth, bounded domain.
  For all $\eps > 0$, there exists $C > 0$ such that 
  \begin{align}\label{eq:gni_comb:lady}
    \intom |\nabla \psi|^4 \le C \intom |\Delta \psi|^2 \intom |\nabla \psi|^2
  \end{align}
  and
  \begin{align}\label{eq:gni_comb:comb}
    \intom \varphi^2 |\nabla \psi|^2 \le \eps \intom |\nabla \varphi|^2 + C \left( \intom |\Delta \psi|^2 \intom |\nabla \psi|^2 + 1 \right) \intom \varphi^2
  \end{align}
  for all $(\varphi, \psi) \in X \defs \{\,(\tilde \varphi, \tilde \psi) \in \con1 \times \con2 \mid \partial_\nu \tilde \psi = 0 \text{ on } \partial \Omega\,\}$.
\end{lem}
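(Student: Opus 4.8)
Both estimates are consequences of the two-dimensional Gagliardo--Nirenberg inequality combined with standard elliptic regularity for the Neumann Laplacian, so the plan is essentially to organize these ingredients correctly. As a preliminary step I would record, for every $\psi \in \con2$ with $\partial_\nu\psi = 0$ on $\partial\Omega$, the Poincaré-type bound
\[
  \intom |\na\psi|^2 = -\intom \Big(\psi - \tfrac{1}{|\Omega|}\intom\psi\Big)\Delta\psi \le C_{\mathrm P}\Big(\intom|\na\psi|^2\Big)^{1/2}\Big(\intom|\Delta\psi|^2\Big)^{1/2},
\]
so that $\intom|\na\psi|^2 \le C_{\mathrm P}^2 \intom|\Delta\psi|^2$, together with the second-order estimate $\intom |D^2\psi|^2 \le C\big(\intom|\Delta\psi|^2 + \intom|\na\psi|^2\big)$, which for the Neumann Laplacian follows from a Reilly-type identity after controlling the arising boundary curvature integral via the trace inequality and Young's inequality (alternatively, one may cite a standard reference for this $W^{2,2}$-bound).

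To prove \eqref{eq:gni_comb:lady}, I would use that for $\psi \in \con2$ the function $|\na\psi|$ is Lipschitz, hence lies in $\sob12$ with $\big|\na|\na\psi|\big| \le |D^2\psi|$ a.e.\ in $\Omega$. Applying the Gagliardo--Nirenberg inequality in the form $\|z\|_{\leb4}^2 \le C_{\mathrm{GN}}\big(\|\na z\|_{\leb2}\|z\|_{\leb2} + \|z\|_{\leb2}^2\big)$ to $z = |\na\psi|$ then yields
\[
  \intom|\na\psi|^4 \le C\Big(\intom|D^2\psi|^2 + \intom|\na\psi|^2\Big)\intom|\na\psi|^2.
\]
Inserting the second-order estimate and afterwards using $\intom|\na\psi|^2 \le C_{\mathrm P}^2\intom|\Delta\psi|^2$ to dominate the remaining lower-order factor $\intom|\na\psi|^2$ by $\intom|\Delta\psi|^2$ turns the right-hand side into $C'\intom|\Delta\psi|^2\intom|\na\psi|^2$, which is \eqref{eq:gni_comb:lady}.

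For \eqref{eq:gni_comb:comb} and a given $\eps > 0$, I would start from Hölder's inequality, $\intom\varphi^2|\na\psi|^2 \le \|\varphi\|_{\leb4}^2\,\|\na\psi\|_{\leb4}^2$, estimate the second factor by \eqref{eq:gni_comb:lady} as $\|\na\psi\|_{\leb4}^2 \le C A$ with $A \defs \big(\intom|\Delta\psi|^2\intom|\na\psi|^2\big)^{1/2}$, and the first factor by Gagliardo--Nirenberg as $\|\varphi\|_{\leb4}^2 \le C_{\mathrm{GN}}\big(\|\na\varphi\|_{\leb2}\|\varphi\|_{\leb2} + \|\varphi\|_{\leb2}^2\big)$. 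Combining these and applying Young's inequality twice — once to split $\eps\|\na\varphi\|_{\leb2}^2$ off the mixed term $CA\|\na\varphi\|_{\leb2}\|\varphi\|_{\leb2}$, and once in the form $A \le \tfrac12(A^2+1)$ — produces precisely the right-hand side of \eqref{eq:gni_comb:comb}, with the constant $C$ depending on $\eps$ and $\Omega$.

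The computations are routine; the one place that deserves attention is the step leading to \eqref{eq:gni_comb:lady}, where the additive lower-order term $\big(\intom|\na\psi|^2\big)^2$ produced by Gagliardo--Nirenberg is \emph{not} of the required product form on its own, yet can be absorbed into $\intom|\Delta\psi|^2\intom|\na\psi|^2$ thanks to the Poincaré-type inequality — which is why no additive constant is needed on the right-hand side of \eqref{eq:gni_comb:lady}, in contrast to \eqref{eq:gni_comb:comb}. The boundary term in the $W^{2,2}$-regularity estimate is a second, milder point, handled by a standard absorption argument.
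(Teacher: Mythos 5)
Your proposal is correct and follows essentially the same route as the paper: the Gagliardo--Nirenberg inequality combined with Neumann elliptic regularity and the Poincar\'e-type bound $\intom|\nabla\psi|^2\le C\intom|\Delta\psi|^2$ gives \eqref{eq:gni_comb:lady}, and then H\"older, Gagliardo--Nirenberg for $\varphi$ and Young yield \eqref{eq:gni_comb:comb}. The only (immaterial) difference is that you apply Gagliardo--Nirenberg to $z=|\nabla\psi|$ with the pointwise bound $|\nabla|\nabla\psi||\le|D^2\psi|$ and invoke the $W^{2,2}$-estimate separately, whereas the paper states the inequality for $\nabla\psi$ with $\|\Delta\psi\|_{\leb2}$ directly, citing elliptic regularity.
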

\begin{proof}
  By the Gagliardo--Nirenberg inequality, elliptic regularity theory (cf.\ \cite[Theorem~19.1]{FriedmanPartialDifferentialEquations1976}) and the Poincar\'e inequality,
  there exist $c_1, c_2 > 0$ such that
  \begin{align*}
    \|\varphi\|_{\leb4}^2
    &\le c_1 \|\nabla \varphi\|_{\leb2} \|\varphi\|_{\leb2} + c_1 \|\varphi\|_{\leb2}^2
    \intertext{and}
    \|\nabla \psi\|_{\leb4}^2
    &\le c_1 \|\Delta \psi\|_{\leb2} \|\nabla \psi\|_{\leb2} + c_1 \|\nabla \psi\|_{\leb2}^2
     \le c_2 \|\Delta \psi\|_{\leb2} \|\nabla \psi\|_{\leb2}
  \end{align*}
  for all $(\varphi, \psi) \in X$.
  Thus, by Hölder's and Young's inequality there is some $c_3 > 0$ with
  \begin{align*}
          \|\varphi \nabla \psi\|_{\leb2}^2
    &\le  \|\varphi\|_{\leb4}^2 \|\nabla \psi\|_{\leb4}^2 \\
    &\le  c_1 c_2 \left( \|\nabla \varphi\|_{\leb2} \|\varphi\|_{\leb2} + \|\varphi\|_{\leb2}^2 \right) \|\Delta \psi\|_{\leb2} \|\nabla \psi\|_{\leb2} \\
    &\le  \eps \|\nabla \varphi\|_{\leb2}^2 + c_3 \left(\|\Delta \psi\|_{\leb2}^2 \|\nabla \psi\|_{\leb2}^2+1\right) \|\varphi\|_{\leb2}^2
  \end{align*}
  for all $(\varphi, \psi) \in X$,
  which implies \eqref{eq:gni_comb:lady} and \eqref{eq:gni_comb:comb} for $C \defs \max\{c_2^2, c_3\}$.
\end{proof}

As the second and third subproblem in \eqref{prob:gen} share some structural properties,
it appears sensible to study the quite general convection--diffusion equation
\begin{align}\label{prob:z}
  \begin{cases}
    z_t = d \Delta z - \nabla \cdot (\sigma(z) \nabla \psi_1) + \psi_2 & \text{in $\Omega \times (0, T)$}, \\
    \partial_\nu z = 0                                                 & \text{on $\Omega \times (0, T)$}, \\
    z(\cdot, 0) = z_0                                                  & \text{in $\Omega$}.
  \end{cases}
\end{align}
Provided that $d > 0$ and that the given functions $\sigma, \psi_1, \psi_2, z_0$ are bounded in appropriate spaces,
we obtain a~priori estimates for solutions of \eqref{prob:z} by employing a testing procedure and Ladyzhenskaya's trick (cf.\ \cite{LadyzenskajaSolutionLargeNonstationary1959}).
In less general settings, corresponding estimates have for instance been obtained in \cite[Lemma~3.2]{JinEtAlGlobalSolvabilityStability2023} by a similar method.
\begin{lem}\label{lm:gen_l2_bdd}
  Let $\Omega \subset \R^2$ be a smooth, bounded domain, $T \in (0,\infty)$, $M > 0$ and $d > 0$.
  Then there exists $C > 0$ such that for all $\sigma \in C^1([0, \infty))$, $\psi_1 \in C^{2,0}(\Ombar \times [0, T); \R^2)$, $\psi_2 \in C^0(\Ombar \times [0, T))$ and $z_0 \in \con0$
  with $|\sigma| \le \mathrm{id}$, $\partial_\nu \psi_1 = 0$ on $\partial \Omega$, $z_0 \ge 0$,
  \begin{align}\label{eq:gen_l2_bdd:cond}
    \sup_{t \in (0, T)} \intom |\nabla \psi_1(\cdot, t)|^2 \le M,
    \quad
    \intntom |\Delta \psi_1|^2 \le M
    \quad \text{and} \quad
    \intom z_0^2 \le M,
  \end{align}
  all nonnegative solutions $z \in C^0(\Ombar \times [0, T)) \cap C^{2, 1}(\Ombar \times (0, T))$ of \eqref{prob:z} satisfy
  \begin{align}\label{eq:gen_l2_bdd:statement}
    \sup_{t \in (0, T)} \intom z^2(\cdot, t) + \intntom |\nabla z|^2 + \intntom z (\psi_2)_- \le C \intntom z (\psi_2)_+ + C,
  \end{align}
  where $\xi_+ \defs \max\{\xi, 0\}$ and $\xi_- \defs \max\{-\xi, 0\}$ denote the positive and negative part of a real number $\xi$, respectively.
\end{lem}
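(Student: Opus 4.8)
The plan is to test the first equation in \eqref{prob:z} with $z$ and then invoke the Gagliardo--Nirenberg-type estimate \eqref{eq:gni_comb:comb} from Lemma~\ref{lm:gni_comb}, in the spirit of Ladyzhenskaya's trick. Since $z$ has the regularity asserted in the lemma, satisfies homogeneous Neumann boundary conditions and is nonnegative, and since moreover $\delny \psi_1 = 0$ on $\dOm$, a standard computation (integrating over $\Om \times (\tau, t)$ for $0 < \tau < t < T$ and letting $\tau \sea 0$, which is admissible as $z$ is continuous up to $t = 0$ with $z(\cdot, 0) = z_0 \in \leb2$) yields
\begin{align*}
  \f12 \ddt \io z^2 + d \io |\na z|^2 + \io z (\psi_2)_-
  &= \io \sigma(z) \na \psi_1 \cdot \na z + \io z (\psi_2)_+
  \qquad \text{for all } t \in (0, T).
\end{align*}

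To control the cross-diffusive term, I would use that $|\sigma| \le \mathrm{id}$ together with $z \ge 0$ gives $|\sigma(z)| \le z$, so that by Young's inequality
\begin{align*}
  \io \sigma(z) \na \psi_1 \cdot \na z &\le \f d2 \io |\na z|^2 + \f{1}{2d} \io z^2 |\na \psi_1|^2 .
\end{align*}
The last integral is exactly of the type handled by \eqref{eq:gni_comb:comb}; applying that inequality with $\varphi \defs z(\cdot, t)$ and $\psi \defs \psi_1(\cdot, t)$ (which belong to $X$ for each $t \in (0, T)$) and choosing the free parameter there small enough, one may absorb all contributions involving $\na z$ into $d \io |\na z|^2$ and arrive at
\begin{align*}
  \f12 \ddt \io z^2 + \f d4 \io |\na z|^2 + \io z (\psi_2)_-
  &\le c_1 h(t) \io z^2 + \io z (\psi_2)_+
  \qquad \text{for all } t \in (0, T) ,
\end{align*}
with some $c_1 > 0$ depending only on $\Om$ and $d$, and with $h(t) \defs \io |\Delta \psi_1(\cdot, t)|^2 \io |\na \psi_1(\cdot, t)|^2 + 1$. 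Here the decisive observation is that \eqref{eq:gen_l2_bdd:cond} entails $\int_0^T h(t) \dt \le M^2 + T$.

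From this the statement follows by an elementary Grönwall argument. Writing $y(t) \defs \io z^2(\cdot, t)$, the differential inequality reads $y' \le 2 c_1 h y + 2 \io z (\psi_2)_+$; multiplying by $\exp(-2 c_1 \int_0^t h(s) \ds) \in (0, 1]$ and integrating over $(0, t)$ shows
\begin{align*}
  \io z^2(\cdot, t) &\le e^{2 c_1 (M^2 + T)} \left( \io z_0^2 + 2 \intntom z (\psi_2)_+ \right) \le e^{2 c_1 (M^2 + T)} \left( M + 2 \intntom z (\psi_2)_+ \right)
\end{align*}
for all $t \in (0, T)$, which is the asserted bound for $\sup_{t \in (0, T)} \io z^2(\cdot, t)$. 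Integrating the differential inequality over $(0, T)$ once more and inserting this estimate for $y$ then also yields the claimed bounds for $\intntom |\na z|^2$ and $\intntom z (\psi_2)_-$, so that \eqref{eq:gen_l2_bdd:statement} holds.

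The only genuinely delicate point is the estimate of $\io z^2 |\na \psi_1|^2$: in two spatial dimensions this quantity is borderline and can be absorbed into $\io |\na z|^2$ only at the price of the time-dependent factor $h(t)$. It is therefore essential that the combination of the uniform-in-time $\leb2$ bound for $\na \psi_1$ with the space-time $\leb2$ bound for $\Delta \psi_1$ in \eqref{eq:gen_l2_bdd:cond} renders $h$ integrable in time, since only then does the Grönwall argument close; in addition one has to carefully keep the (a~priori possibly large) term $\io z (\psi_2)_+$ unchanged throughout that argument and to preserve the favourable sign of $\io z (\psi_2)_-$.
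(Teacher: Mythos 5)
Your proposal is correct and follows essentially the same route as the paper: testing \eqref{prob:z} with $z$, using $|\sigma(z)|\le z$ and Young's inequality, absorbing $\intom z^2|\nabla\psi_1|^2$ via \eqref{eq:gni_comb:comb}, and closing with a Grönwall/ODE-comparison argument based on the time-integrability of the coefficient guaranteed by \eqref{eq:gen_l2_bdd:cond}. The only cosmetic difference is that the paper keeps the $-\frac d2\intom|\nabla z|^2$ and $-\intom z(\psi_2)_-$ terms inside the variation-of-constants formula (using that the exponential weights are $\ge 1$), whereas you first derive the $\sup_t\intom z^2$ bound and then integrate the differential inequality once more to recover the gradient and $(\psi_2)_-$ terms — both are valid.
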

\begin{proof}
  Testing \eqref{prob:z} with $z$ and applying Young's inequality yields 
  \begin{align*}
          \frac12 \ddt \intom z^2
    &=    - d \intom |\nabla z|^2 + \intom \sigma(z) \nabla z \cdot \nabla \psi_1 + \intom z \psi_2 \\
    &\le  - \frac{d}{2} \intom |\nabla z|^2 + \frac{1}{2d} \intom z^2 |\nabla \psi_1|^2 + \intom z \psi_2
    \qquad \text{in $(0, T)$}.
  \end{align*}
  Here, Lemma~\ref{lm:gni_comb} provides $c_1 > 0$ such that
  \begin{align*}
          \frac{1}{2d} \intom z^2 |\nabla \psi_1|^2
    &\le  \frac{d}{4} \intom |\nabla z|^2 + c_1 \left( \intom |\Delta \psi_1|^2 \intom |\nabla \psi_1|^2 + 1 \right) \intom z^2 \\
    &\le  \frac{d}{4} \intom |\nabla z|^2 + \frac{c_2}{2} \left( \intom |\Delta \psi_1|^2 + 1 \right) \intom z^2
    \qquad \text{in $(0, T)$},
  \end{align*}
  where $c_2 \defs 2\max\{1, M\} c_1$.
  Therefore, $y \colon [0, T) \to \R$, $t \mapsto \intom z^2(\cdot, t)$, fulfils
  \begin{align*}
    y' \le c_2 \left( \intom |\Delta \psi_1|^2 + 1 \right) y - \frac{d}{2} \intom |\nabla z|^2 + 2 \intom z \psi_2
    \qquad \text{in $(0, T)$},
  \end{align*}
  so that an ODE comparison argument and the variation-of-constants formula assert
  \begin{align*}
          \intom z^2(\cdot, t)
    &\le  \ure^{c_2 \int_0^t \left(\intom |\Delta \psi_1(x, s)|^2 \dx + 1 \right) \ds} \intom z_0^2 \\
    &\pe  + \int_0^t \ure^{c_2 \int_s^t \left( \intom |\Delta \psi_1(x, \tau)|^2 \dx + 1\right) \dtau} \left( - \frac{d}{2} \intom |\nabla z(x, s)|^2 \dx + 2 \intom (z \psi_2)(x, s) \dx \right) \ds \\
    &\le  \ure^{c_2 (M+T)} M
          - \frac{d}{2} \int_0^t \intom |\nabla z|^2 
          - 2 \int_0^t \intom z (\psi_2)_-
          + 2 \ure^{c_2(M+T)} \int_0^t \intom z (\psi_2)_+
  \end{align*}
  for all $t \in (0, T)$.
  An application of the monotone convergence theorem then yields \eqref{eq:gen_l2_bdd:statement} for $C \defs \max\{\frac{2}{d}, 1\} \max\{M, 2\} \ure^{c_2(M+T)} > 0$.
\end{proof}

\section{Global classical solutions for the system without prey-taxis}\label{sec:xi=0}
Throughout this section, we fix a smooth, bounded domain $\Omega \subset \R^2$, functions, parameters and initial data as in \eqref{eq:intro:def_f}--\eqref{eq:intro:init}.
Importantly, we also set $\xi = 0$, that is, we consider the setting without prey-taxis.
Then Lemma~\ref{lm:local_ex} asserts that there is a nonnegative, unique maximal classical solution $(u, v, w) $ of regularity \eqref{eq:local_ex:reg} of \eqref{system}.
We fix this solution as well as its maximal existence time $\tmax = \tmax(u_0, v_0, w_0)$.

The goal of this section is to prove Theorem~\ref{th:xi=0}, i.e.\ that this solution is global in time.
To that end, we now collect several a~priori estimates which will eventually show that \eqref{eq:local_ex:ext} does not hold,
which according to Lemma~\ref{lm:local_ex} can only happen if $\tmax = \infty$.

The first such bound going beyond Lemma~\ref{lm:u_v_w_l1} makes use of the structure of $f$ and $g$, the comparison principle,  and the assumption $\xi = 0$.
\begin{lem}\label{lm:u_v_linfty}
  There is $C >0$ such that
  \begin{align*}
    \|u\|_{L^\infty(\Omega \times (0, \tmax))} \le C
    \quad \text{and} \quad
    \|v\|_{L^\infty(\Omega \times (0, \tmax))} \le C.
  \end{align*}
\end{lem}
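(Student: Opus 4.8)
The plan is to exploit that, because $\xi = 0$, the first and second equations in \eqref{system} are scalar semilinear parabolic equations with homogeneous Neumann boundary conditions to which the parabolic comparison principle applies; the nonnegativity of the remaining components then reduces the relevant reaction terms to logistic ones after dropping favourable negative contributions.

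First I would note that \eqref{eq:intro:init} together with the embedding $\sob1\theta \embed \con0$ (valid since $\theta > 2 = \dim \Omega$) gives $u_0, v_0 \in \Lom\infty$. As $v, w \ge 0$ on $\Omega \times (0, \tmax)$, the function $u$ satisfies $u_t \le d_1 \Delta u + u(\lambda_1 - \mu_1 u)$, so comparing it with the spatially homogeneous supersolution $\ol u$ solving $\ol u' = \ol u(\lambda_1 - \mu_1 \ol u)$ with $\ol u(0) = \|u_0\|_{\Lom\infty}$, and recalling the elementary bound $\ol u(t) \le \max\{\|u_0\|_{\Lom\infty}, \f{\lambda_1}{\mu_1}\}$ for all $t \ge 0$, yields
\[
  \|u\|_{L^\infty(\Omega \times (0, \tmax))} \le \max\left\{\|u_0\|_{\Lom\infty}, \f{\lambda_1}{\mu_1}\right\} \sfed c_1 .
\]

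With this bound at hand I would argue analogously for $v$: since $u \le c_1$ and $w \ge 0$, the second equation (with $\xi = 0$) shows $v_t \le d_2 \Delta v + v(\lambda_2 + b_1 c_1 - \mu_2 v)$, and comparison with the corresponding logistic ODE gives $\|v\|_{L^\infty(\Omega \times (0, \tmax))} \le \max\{\|v_0\|_{\Lom\infty}, \f{\lambda_2 + b_1 c_1}{\mu_2}\}$. Choosing $C$ as the larger of the two constants completes the proof.

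Since the comparison principle and the logistic ODE estimate are entirely standard, I anticipate no genuine obstacle here; the only points needing a moment's care are the Sobolev embedding placing $u_0, v_0$ in $\Lom\infty$ and, more importantly, the observation that it is precisely the hypothesis $\xi = 0$ that makes the comparison principle applicable to the $v$-equation — for $\xi > 0$ the cross-diffusion term $-\xi \nabla \cdot (v \nabla u)$ destroys this scalar structure, which is one reason the full model is substantially harder.
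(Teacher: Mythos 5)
Your proposal is correct and follows essentially the same route as the paper: the paper likewise invokes the comparison principle, directly with the constant supersolutions $\max\{\|u_0\|_{\leb\infty},\lambda_1/\mu_1\}$ and $\max\{\|v_0\|_{\leb\infty},(\lambda_2+b_1\ol u)/\mu_2\}$ rather than via the logistic ODE, which is an immaterial difference. (Only a cosmetic remark: the hypothesis $\xi=0$ is irrelevant for the $u$-equation, which contains no cross-diffusion in any case; as you correctly note at the end, it matters only for the $v$-equation.)
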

\begin{proof}
  Since $\ol u \defs \max\{\|u_0\|_{\leb\infty}, \frac{\lambda_1}{\mu_1}\}$ is a supersolution of the first equation in \eqref{system}, $u$ is bounded from above.
  Then $\ol v \defs \max\{\|v_0\|_{\leb\infty}, \frac{\lambda_2}{\mu_2} + \frac{b_1 \ol u}{\mu_2}\}$ is a supersolution of the second equation in \eqref{system}
  so that also $v$ is bounded from above.
  Finally, nonnegativity of $u$ and $v$ has already been asserted in Lemma~\ref{lm:local_ex}.
\end{proof}

Since the shape of $f$ and $g$ rather directly yield boundedness of $u$ and $v$ (as evidenced by Lemma~\ref{lm:u_v_linfty}), the situation of $\xi=0$ presents itself as similar to chemotaxis-consumption systems with one equation resembling 
\[
 w_t=Δw - ∇\cdot (w∇z) + \tilde{h}(w)
\]
for some bounded function $z$ and, in this case, essentially logistic source terms $\tilde{h}$. For this setting, global existence in two-dimensional domains is not surprising (cf.\ \cite{lanwin_consumptionsurvey}, \cite{lankeit_wang}), although some arguments relying on delicate energy-type arguments may not be transferable.

As $\xi = 0$, the $L^2$ space-time estimates provided by Lemma~\ref{lm:u_v_w_l1} rapidly imply a~priori estimates for certain spatial derivatives of $u$ and $v$.
\begin{lem}\label{lm:u_v_w12}
  Let $T \in (0, \infty) \cap (0, \tmax]$.
  Then there is $C > 0$ such that
  \begin{align*}
    \sup_{t \in (0, T)} \left( \intom |\nabla u(\cdot, t)|^2 + \intom |\nabla v(\cdot, t)|^2 \right) \le C
  \end{align*}
  and
  \begin{align*}
    \intntom |\Delta u|^2 + \intntom |\Delta v|^2 \le C.
  \end{align*}
\end{lem}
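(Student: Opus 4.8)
The plan is to exploit that, because $\xi = 0$, the first two equations in \eqref{system} are reaction--diffusion equations with no taxis contribution, so testing each of them against the negative Laplacian of its own solution component immediately yields the asserted bounds. First I would collect the two ingredients that make this work. By Lemma~\ref{lm:u_v_linfty}, $u$ and $v$ are bounded in $L^\infty(\Omega \times (0, \tmax))$; hence the explicit form of $f$ and $g$ in \eqref{eq:intro:def_f}--\eqref{eq:intro:def_g}, together with $u, v, w \ge 0$, gives a constant $c > 0$ with $|f(u,v,w)| \le c(1+w)$ and $|g(u,v,w)| \le c(1+w)$ pointwise in $\Omega \times (0, \tmax)$. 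Combined with the space--time estimate \eqref{eq:u_v_w_l1:l2l2} of Lemma~\ref{lm:u_v_w_l1}, this shows that $f(u,v,w)$ and $g(u,v,w)$ are bounded in $L^2(\Omega \times (0,T))$. Moreover, since $\Omega$ is bounded and the exponent $\theta$ in \eqref{eq:intro:init} satisfies $\theta > 2$, we have $\nabla u_0, \nabla v_0 \in \leb2$.

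Next I would multiply the $u$-equation by $-\Delta u$ and integrate over $\Omega$. As $\partial_\nu u = 0$ on $\partial\Omega$, integration by parts produces no boundary term, and inserting $u_t = d_1 \Delta u + f(u,v,w)$ together with Young's inequality gives
\begin{align*}
  \f12 \ddt \io |\nabla u|^2 + d_1 \io |\Delta u|^2
  &= - \io f(u,v,w)\, \Delta u \\
  &\le \f{d_1}{2} \io |\Delta u|^2 + \f{1}{2d_1} \io |f(u,v,w)|^2
  \qquad \text{in } (0, T),
\end{align*}
so that $\ddt \io |\nabla u|^2 + d_1 \io |\Delta u|^2 \le \f{1}{d_1} \io |f(u,v,w)|^2$ on $(0,T)$. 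Integrating this over $(0, t)$ for $t \in (0, T)$ and using the $L^2(\Omega \times (0,T))$-bound for $f(u,v,w)$ established above, as well as $\nabla u_0 \in \leb2$, yields $\sup_{t \in (0,T)} \io |\nabla u(\cdot, t)|^2 \le C$ and $\intntom |\Delta u|^2 \le C$. Running the identical computation with $v$, $d_2$ and $g(u,v,w)$ in place of $u$, $d_1$ and $f(u,v,w)$ gives the corresponding bounds for $v$, which completes the proof.

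There is essentially no serious obstacle here, which matches the remark in the text that these estimates follow ``rapidly''. The only point deserving a little attention is that the reaction terms $f$ and $g$ involve the third component $w$, for which at this stage only the space--time $L^2$ control from \eqref{eq:u_v_w_l1:l2l2} is available; but, as the displayed estimate shows, that is exactly the integrability needed to absorb the reaction term after Young's inequality. (The analogous scheme applied to the $w$-equation would additionally have to handle the alarm-taxis term $-\chi \nabla \cdot (w \nabla(uv))$, which is why a corresponding bound for $w$ is not claimed here and $w$ is dealt with separately afterwards.)
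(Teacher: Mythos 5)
Your proposal is correct and follows essentially the same route as the paper: testing the first (respectively second) equation with $-\Delta u$ (respectively $-\Delta v$), absorbing the reaction term via Young's inequality using the $L^\infty$ bounds from Lemma~\ref{lm:u_v_linfty} and the space--time $L^2$ estimate \eqref{eq:u_v_w_l1:l2l2}, and integrating in time. The only (immaterial) difference is bookkeeping: you bound $|f|,|g|\le c(1+w)$ using boundedness of both $u$ and $v$, while the paper keeps $u+v+w$ inside the square and invokes \eqref{eq:u_v_w_l1:l2l2} for all three components.
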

\begin{proof}
  Testing the first equation with $-\Delta u$ gives
  \begin{align*}
          \frac12 \ddt \intom |\nabla u|^2
    &=    - \intom |\Delta u|^2
          - \intom u (1 - u + v - w) \Delta u \\
    &\le  - \frac12 \intom |\Delta u|^2
          + \frac12 \|u\|_{L^\infty(\Omega \times (0, T))}^2 \intom (1 + u + v + w)^2 
    \qquad \text{in $(0, \tmax)$},
  \end{align*}
  so that due to Lemma~\ref{lm:u_v_w_l1} and Lemma~\ref{lm:u_v_linfty} the statement for the first solutions component follows upon an integration in time.
  The estimate for the second one follows analogously.
\end{proof}

The previous two lemmata make Lemma~\ref{lm:gen_l2_bdd} applicable and thus yield a uniform-in-time $L^2$ bound for $w$.
\begin{lem}\label{lm:w_l2}
  Let $T \in (0, \infty) \cap (0, \tmax]$.
  Then there is $C > 0$ such that
  \begin{align*}
    \sup_{t \in (0, T)} \intom w^2(\cdot, t) \le C.
  \end{align*}
\end{lem}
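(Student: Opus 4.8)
The plan is to apply Lemma~\ref{lm:gen_l2_bdd} to the third equation in \eqref{system}, with $z \defs w$, $d \defs d_3$, $\sigma \defs \mathrm{id}$, $\psi_1 \defs \chi u v$ and $\psi_2 \defs h(u, v, w)$. With these choices \eqref{prob:z} is precisely the $w$-equation in \eqref{system} (since $w \ge 0$, we have $\sigma(w) = w$ and $|\sigma| \le \mathrm{id}$ on $[0, \infty)$), and by Lemma~\ref{lm:local_ex} and continuity we have $\psi_1 \in C^{2,0}(\Ombar \times [0, T))$ with $\partial_\nu \psi_1 = \chi(v \partial_\nu u + u \partial_\nu v) = 0$ on $\partial \Omega$, $\psi_2 \in C^0(\Ombar \times [0, T))$, $w \in C^0(\Ombar \times [0,T)) \cap C^{2,1}(\Ombar \times (0,T))$ and $z_0 = w_0 \ge 0$. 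The only nonformal work is therefore to verify the three hypotheses in \eqref{eq:gen_l2_bdd:cond} (for our fixed finite $T$) and then to absorb the term $\intntom w(\psi_2)_+$ appearing on the right-hand side of \eqref{eq:gen_l2_bdd:statement}.

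The third condition, $\intom z_0^2 \le M$, is immediate since $w_0 \in \con0$ gives $\intom w_0^2 \le |\Omega|\,\|w_0\|_{C^0(\Ombar)}^2 < \infty$. For the first condition I would write $\nabla \psi_1 = \chi(v \nabla u + u \nabla v)$ and estimate $|\nabla \psi_1|^2 \le 2\chi^2\big(\|v\|_{L^\infty(\Omega \times (0, T))}^2 |\nabla u|^2 + \|u\|_{L^\infty(\Omega \times (0, T))}^2 |\nabla v|^2\big)$, so that a uniform-in-time $L^2$ bound for $\nabla \psi_1$ follows from Lemma~\ref{lm:u_v_linfty} together with the first estimate in Lemma~\ref{lm:u_v_w12}.

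The main point is the second condition, $\intntom |\Delta \psi_1|^2 \le M$. Expanding $\Delta(uv) = v \Delta u + 2 \nabla u \cdot \nabla v + u \Delta v$ reduces this to bounding $\intntom v^2 |\Delta u|^2$, $\intntom u^2 |\Delta v|^2$ and $\intntom |\nabla u|^2 |\nabla v|^2$. The first two are controlled by the $L^\infty$ bounds of Lemma~\ref{lm:u_v_linfty} and the space-time $L^2$ bounds for $\Delta u$ and $\Delta v$ from Lemma~\ref{lm:u_v_w12}. For the cross term I would apply the Cauchy--Schwarz inequality and then the Ladyzhenskaya-type inequality \eqref{eq:gni_comb:lady}: pointwise in time $\intom |\nabla u|^4 \le C \intom |\Delta u|^2 \intom |\nabla u|^2$, and integrating in time against $\sup_{t \in (0,T)} \intom|\nabla u|^2 \le C$ and $\intntom |\Delta u|^2 \le C$ (both from Lemma~\ref{lm:u_v_w12}) yields $\intntom |\nabla u|^4 \le C$, and likewise $\intntom |\nabla v|^4 \le C$; hence $\intntom |\nabla u|^2 |\nabla v|^2 \le \big(\intntom |\nabla u|^4\big)^{1/2}\big(\intntom |\nabla v|^4\big)^{1/2} \le C$. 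This is the step I expect to be the crux, since it is where the (quasi-)two-dimensionality enters through \eqref{eq:gni_comb:lady}.

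Finally, having \eqref{eq:gen_l2_bdd:statement} at hand, I would use $w \ge 0$ and Lemma~\ref{lm:u_v_linfty} to get $(\psi_2)_+ = \big(w(\lambda_3 - \mu_3 w + b_2 u + b_3 v)\big)_+ \le w\big(\lambda_3 + b_2 \|u\|_{L^\infty(\Omega \times (0,T))} + b_3 \|v\|_{L^\infty(\Omega \times (0,T))}\big)$, so that $\intntom w(\psi_2)_+ \le C \intntom w^2 \le C$ by the space-time $L^2$ bound \eqref{eq:u_v_w_l1:l2l2} of Lemma~\ref{lm:u_v_w_l1}. Inserting this into \eqref{eq:gen_l2_bdd:statement} gives the desired $\sup_{t \in (0,T)} \intom w^2(\cdot, t) \le C$.
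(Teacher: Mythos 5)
Your proposal is correct and follows essentially the same route as the paper: apply Lemma~\ref{lm:gen_l2_bdd} with $\psi_1 = \chi uv$ and $\psi_2 = h$, verify \eqref{eq:gen_l2_bdd:cond} via Lemma~\ref{lm:u_v_linfty}, Lemma~\ref{lm:u_v_w12} and the continuity of $w_0$, and then bound $\intntom w(\psi_2)_+$ using \eqref{eq:u_v_w_l1:l2l2} and the $L^\infty$ bounds. The only difference is that you spell out the verification of $\intntom|\Delta(uv)|^2 \le M$ (in particular the cross term via \eqref{eq:gni_comb:lady}), which the paper leaves implicit.
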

\begin{proof}
  We set $\psi_1 = \chi uv$ and $\psi_2 = h$, then Lemma~\ref{lm:u_v_linfty}, Lemma~\ref{lm:u_v_w12} and continuity of $w_0$ imply \eqref{eq:gen_l2_bdd:cond} for some $M > 0$,
  so that Lemma~\ref{lm:gen_l2_bdd} asserts
  \begin{align*}
    \sup_{t \in (0, T)} \intom w^2(\cdot, t) \le C \int_0^T \int_\Omega w^2 (\lambda_3 + b_2 u + b_3 v).
  \end{align*}
  The right-hand side herein is bounded by \eqref{eq:u_v_w_l1:l2l2} and Lemma~\ref{lm:u_v_linfty}.
\end{proof}

With these estimates at hand, showing $\tmax = \infty$ already comes down to employing a rather standard bootstrap procedure.
\begin{lem}\label{lm:w_linfty}
  The solution $(u, v, w)$ of \eqref{system} is global in time.
\end{lem}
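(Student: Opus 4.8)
The plan is to contradict the extensibility criterion \eqref{eq:local_ex:ext} of Lemma~\ref{lm:local_ex}. Assuming $\tmax < \infty$, I would fix $t_0 \defs \tmax/2$ and show that, for every $q \in (2, \infty)$, the quantity $\|u(\cdot,t)\|_{W^{1,q}(\Omega)} + \|v(\cdot,t)\|_{W^{1,q}(\Omega)} + \|w(\cdot,t)\|_{\con0}$ stays bounded as $t \nearrow \tmax$; since the right-hand side of \eqref{eq:local_ex:ext} would then be finite, this forces $\tmax = \infty$. All bounds will be obtained from the variation-of-constants formulas for the three equations with base point $t_0$, using that $u(\cdot,t_0), v(\cdot,t_0), w(\cdot,t_0)$ are smooth, and noting that \eqref{eq:local_ex:ext} only concerns the behaviour as $t \nearrow \tmax$.

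\textbf{Step 1: higher integrability of $\nabla u$ and $\nabla v$.} Since $\xi = 0$, the first two lines of \eqref{system} are decoupled linear heat equations with inhomogeneities $f(u,v,w)$ and $g(u,v,w)$. By Lemma~\ref{lm:u_v_linfty} ($u, v$ bounded) and Lemma~\ref{lm:w_l2} ($w$ bounded in $L^\infty((0,\tmax);\leb2)$), these inhomogeneities are bounded in $L^\infty((0,\tmax);\leb2)$. The smoothing estimate $\|\nabla\ure^{\tau\Delta}\phi\|_{\leb q} \le C(1 + \tau^{-1+1/q})\|\phi\|_{\leb2}$ for the Neumann heat semigroup on the two-dimensional domain $\Omega$, together with the integrability of $\tau \mapsto \tau^{-1+1/q}$ near $0$ for every $q < \infty$ and the finiteness of $\tmax$, then yields $\sup_{t\in(t_0,\tmax)}\|\nabla u(\cdot,t)\|_{\leb q} < \infty$ and, analogously, $\sup_{t\in(t_0,\tmax)}\|\nabla v(\cdot,t)\|_{\leb q} < \infty$ for every $q < \infty$. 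With Lemma~\ref{lm:u_v_linfty} this gives the required $W^{1,q}$-bounds for $u$ and $v$, and shows in addition that $\nabla(uv) = v\nabla u + u\nabla v$ is bounded in $L^\infty((0,\tmax);\leb q)$ for every $q < \infty$.

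\textbf{Step 2: boundedness of $w$.} I would next treat the third equation as $w_t = d_3 \Delta w - \chi\nabla\cdot(wb) + h(u,v,w)$ with the drift $b \defs \nabla(uv)$ now known to be bounded in $L^\infty((0,\tmax);\leb q)$ for all $q < \infty$. Starting from $w \in L^\infty((0,\tmax);\leb2)$ (Lemma~\ref{lm:w_l2}), boundedness of $u, v$ gives $wb \in L^\infty((0,\tmax);\leb{p_0})$ for some $p_0 \in (1,2)$ arbitrarily close to $2$, and $h(u,v,w) \in L^\infty((0,\tmax);\leb1)$ (using also Lemma~\ref{lm:u_v_w_l1}). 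Applying the smoothing estimates $\|\ure^{\tau\Delta}\nabla\cdot\phi\|_{\leb r} \le C\tau^{-1/2 - (1/p - 1/r)}\|\phi\|_{\leb p}$ and $\|\ure^{\tau\Delta}\phi\|_{\leb r} \le C\tau^{-(1/p - 1/r)}\|\phi\|_{\leb p}$ to the Duhamel representation of $w$, a first step raises the integrability of $w$ to $L^\infty((0,\tmax);\leb{r_1})$ for some $r_1 > 2$; plugging this in (so that now $wb \in L^\infty((0,\tmax);\leb{p_1})$ with some $p_1 > 2$, while $h(u,v,w) \in L^\infty((0,\tmax);\leb{r_1/2})$ with $r_1/2 > 1$), a second step yields $w \in L^\infty(\Omega\times(0,\tmax))$, hence $\sup_{t\in(t_0,\tmax)}\|w(\cdot,t)\|_{\con0} < \infty$. (As an alternative, one may test the third equation by $w^{p-1}$: the taxis contribution $\chi(p-1)\intom w^{p-1}\nabla w \cdot b$ is absorbed into the dissipation term $-d_3(p-1)\intom w^{p-2}|\nabla w|^2$ via Young's and the Gagliardo--Nirenberg inequality at the expense of a term controlled by $\|b\|_{\leb q}$ and $\intom w^p$; combined with the superlinear absorption $-\mu_3\intom w^{p+1}$ and a Grönwall argument this gives $w \in L^\infty((0,\tmax);\leb p)$ for every $p < \infty$, and a single semigroup step then upgrades this to an $L^\infty$-bound.)

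Combining the bounds from the two previous steps contradicts \eqref{eq:local_ex:ext}, whence $\tmax = \infty$. I expect the $w$-bootstrap to be the main obstacle: the drift $\nabla(uv)$ is controlled uniformly in time only in $\leb q$ for $q < \infty$ but not in $\leb\infty$, and the source $h(u,v,w)$ carries the superlinear term $-\mu_3 w^2$ that is a~priori merely bounded in $L^\infty((0,\tmax);\leb1)$, so an $L^\infty$-bound for $w$ is not reachable in a single step and the iteration has to be arranged so that each step gains enough integrability while keeping the temporal singularities of the semigroup kernels integrable.
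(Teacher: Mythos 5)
Your proposal is correct and follows essentially the same route as the paper: assuming $\tmax<\infty$, you combine the $L^\infty$ bounds for $u,v$ and the $L^\infty$-$L^2$ bound for $w$ with Neumann heat semigroup estimates in the variation-of-constants formula to obtain uniform $W^{1,q}$ bounds for $u$ and $v$ (the paper for one fixed $\theta>2$ taken from the initial data, you for all finite $q$ starting from an interior time $t_0$), and then bootstrap the third equation to an $L^\infty$ bound for $w$, contradicting \eqref{eq:local_ex:ext}. The only notable difference is the final step: the paper reaches $L^\infty$ for $w$ in a single Duhamel step by interpolating $\|w\|_{\leb r}\le\|w\|_{\leb\infty}^{1-1/r}\|w\|_{\leb1}^{1/r}$ and absorbing the resulting sublinear power of $M(t)=1+\sup_{s\in(0,t)}\|w(\cdot,s)\|_{\leb\infty}$, whereas you iterate through an intermediate bound in $\leb{r_1}$ with $r_1>2$; both variants are valid, so your closing worry that a single step cannot work is unfounded but harmless.
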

\begin{proof}
  Suppose $\tmax \lt \infty$.
  According to \cite[Lemma~1.3]{winklerAggregationVsGlobal2010} (and because $\Omega$ is a two-dimensional domain), there are $c_1, c_2, c_3, c_4 \gt 0$ such that
  \begin{alignat*}{2}
    \|\ure^{t d_i \Delta} \varphi\|_{\leb p}
    &\le c_1 (1 + t^{- (\frac1q - \frac1p)}) \|\varphi\|_{\leb q}
    &&\qquad \text{for all $\varphi \in \leb q$}, \\
    \|\nabla \ure^{t d_i \Delta} \varphi\|_{\leb p}
    &\le c_2 (1 + t^{-\frac12 - (\frac1q - \frac1p)}) \|\varphi\|_{\leb q}
    &&\qquad \text{for all $\varphi \in \sob1q$}, \\
    \|\nabla \ure^{t d_i \Delta} \varphi\|_{\leb p}
    &\le c_3 \|\nabla \varphi\|_{\leb p}
    &&\qquad \text{for all $\varphi \in \sob1p$, provided $p \ge 2$}, \\
    \|\ure^{t d_i \Delta} \nabla \cdot \varphi\|_{\leb p}
    &\le c_4 (1 + t^{-\frac12 - (\frac1q - \frac1p)}) \|\varphi\|_{\leb q}
    &&\qquad \text{for all $\varphi \in L^q(\Omega; \R^2)$},
  \end{alignat*}
  for all $t \in (0, \infty)$, all $1 \le p \le q \le \infty$ and all $i \in \{1, 2, 3\}$.
  By \eqref{eq:intro:init}, there exists $\theta > 2$ with $u_0, v_0 \in \sob1\theta$.
  Thus, by the variations-of-constants formula,
  \begin{align*}
    &  \|\nabla u(\cdot, t)\|_{\leb\theta} 
    \le  \|\nabla \ure^{t d_1 \Delta} u_0\|_{\leb\theta}
          + \int_0^t \|∇\ure^{(t-s)d_1 \Delta} (u(\lambda_1 - \mu_1 u - a_1 v - a_2 w))(\cdot, s)\|_{\leb\theta} \ds \\
    &\le  c_3 \|u_0\|_{\sob1\theta}
          + c_2 \int_0^t (1 + (t-s)^{-\frac12 - (\frac12 - \frac1\theta)}) \|(u(\lambda_1 - \mu_1 u - a_1 v - a_2 w))(\cdot, s)\|_{\leb2} \ds \\
    &\le  c_3 \|u_0\|_{\sob1\theta}
          + c_2 \|u(\lambda_1 - \mu_1 u - a_1 v - a_2 w)\|_{L^\infty((0, \tmax); \leb2)} \int_0^{\tmax} (1 + s)^{-1+\frac1\theta} \ds
  \end{align*}
  for all $t \in (0, \tmax)$, which when combined with Lemma~\ref{lm:u_v_linfty} and Lemma~\ref{lm:w_l2} implies that there is $c_5 \gt 0$ such that
  \begin{align}\label{eq:w_linfty:u_est}
      \|u(\cdot, t)\|_{\sob1\theta} \le c_5
      \qquad \text{for all $t \in (0, \tmax)$.}
  \end{align}
  Likewise, we see that
  \begin{align}\label{eq:w_linfty:v_est}
      \|v(\cdot, t)\|_{\sob1\theta} \le c_6
      \qquad \text{for all $t \in (0, \tmax)$}
  \end{align}
  for some $c_6 \gt 0$.

  Fixing $q \in (2, \theta)$, we make again use of the variations-of-constants formula to obtain
  \begin{align*}
          \|w(\cdot, t)\|_{\leb\infty}
    &\le  \|\ure^{t d_3 \Delta} w_0\|_{\leb\infty}
          + \chi \int_0^t \|\ure^{(t-s)d_3\Delta} \nabla \cdot (w \nabla(uv))(\cdot, s)\|_{\leb\infty} \ds \\
    &\pe  + \int_0^t \|\ure^{(t-s)d_3\Delta} (w(\lambda_3 - \mu_3 w + a_2 u + a_2 v))(\cdot, s)\|_{\leb\infty} \ds \\
    &\le  \|w_0\|_{\leb\infty}
          + c_4 \chi \int_0^t (1 + (t-s)^{-\frac12 - \frac1q}) \|(w \nabla(uv))(\cdot, s)\|_{\leb q} \ds \\
    &\pe  + c_1 \int_0^t (1 + (t-s)^{-\frac34}) \|(w(\lambda_3 - \mu_3 w + a_2 u + a_2 v))(\cdot, s)\|_{\leb{\frac43}} \ds
  \end{align*}
  for all $t \in (0, \tmax)$.
  Here we make use of Hölder's inequality to obtain with $r \defs \frac{q\theta}{\theta-q} > 0$ that
  \begin{align*}
          \|(w \nabla(uv))(\cdot, s)\|_{\leb q}
    &\le  \|w(\cdot, s)\|_{\leb r} \|(\nabla(uv))(\cdot, s)\|_{\leb \theta} \\
    &\le  \|w(\cdot, s)\|_{\leb\infty}^{1-\frac1r} \|w(\cdot, s)\|_{\leb1}^\frac1r \|(\nabla(uv))(\cdot, s)\|_{\leb \theta}
  \intertext{and}
    &\pe  \|(w(\lambda_3 - \mu_3 w + a_2 + a_3 v))(\cdot, s)\|_{\leb{\frac43}} \\
    &\le  \|w(\cdot, s)\|_{\leb 4} \|(w(\lambda_3 - \mu_3 w + a_2 + a_3 v))(\cdot, s)\|_{\leb 2} \\
    &\le  \|w(\cdot, s)\|_{\leb \infty}^{1-\frac14} \|w(\cdot, s)\|_{\leb1}^\frac14 \|(w(\lambda_3 - \mu_3 w + a_2 + a_3 v))(\cdot, s)\|_{\leb 2}
  \end{align*}
  for all $s \in (0, \tmax)$.
  Therefore, there is $c_7 \gt 0$ such that
  \begin{align*}
    M \colon [0, \tmax) \ra [0, \infty), \quad t \mapsto 1 + \sup_{s \in (0, t)} \|w(\cdot, s)\|_{\leb\infty}
  \end{align*}
  fulfils
  \begin{align*}
        M(t)
    \le c_7 + c_7 M^{1-\frac1{\max\{4, r\}}}(t)
    \qquad \text{for all $t \in (0, \tmax)$}
  \end{align*}
  and hence
  \begin{align*}
        M^{\frac1{\max\{4, r\}}}(t)
    \le 2 c_7
    \qquad \text{for all $t \in (0, \tmax)$}.
  \end{align*}
  In particular, there is $c_8 \gt 0$ such that
  \begin{align}\label{eq:w_linfty_w_est}
      \|w(\cdot, t)\|_{\leb\infty} \le c_8
      \qquad \text{for all $t \in (0, \tmax)$}.
  \end{align}

  In combination, \eqref{eq:w_linfty:u_est}, \eqref{eq:w_linfty:v_est}, \eqref{eq:w_linfty_w_est} and the extensibility criterion in Lemma~\ref{lm:local_ex}
  show that our assumption $\tmax \lt \infty$ must be false.
\end{proof}

\begin{proof}[Proof of Theorem~\ref{th:xi=0}]
  All claims have been established in Lemma~\ref{lm:w_linfty}.
\end{proof}

\section{Global generalized solutions for the system with prey-taxis}\label{sec:xi>0}
\subsection{Solution concept}
In this section, we will construct global generalized solution of \eqref{system} with $\xi > 0$.
We begin by introducing our solution concept.
\begin{definition}\label{def:sol_concept}
  Let $\Omega \subset \R^2$ be a smooth, bounded domain, assume \eqref{eq:intro:def_f}--\eqref{eq:intro:def_h} and let $u_0, v_0, w_0 \in \leb1$ be nonnegative.
  A triple $(u, v, w) \in L_{\loc}^2(\Ombarinf)$ of nonnegative functions
  with
  \begin{align*}
    \nabla u,\; \nabla v,\; \mathds 1_{\{w \le k\}} \nabla w \in L_{\loc}^2(\Ombarinf)
  \end{align*}
  for all $k \in \N$ (where we denote the characteristic function of a set $A$ by $\mathds 1_A$) 
  is called a \emph{global generalized solution} of \eqref{system} if
  \begin{itemize}
    \item
      $u$ and $v$ are weak solutions of the respective subproblems in \eqref{system}, that is, 
      \begin{align}\label{eq:sol_concept:u_weak}
          - \intninfom u \varphi_t
          - \intom u_0 \varphi(\cdot, 0) 
        = - d_1 \int_0^\infty\! \intom \nabla u \cdot \nabla \varphi
          + \int_0^\infty\!\intom f(u, v, w) \varphi
      \end{align}
      and
      \begin{align}\label{eq:sol_concept:v_weak}
          - \int_0^\infty\!\!\!\intom v \varphi_t
          - \intom v_0 \varphi(\cdot, 0) 
        = - d_2 \int_0^\infty\!\!\!\intom \nabla v \cdot \nabla \varphi
          + \xi \int_0^\infty\!\!\!\intom v \nabla u \cdot \nabla \varphi
          + \int_0^\infty\!\!\!\intom g(u, v, w) \varphi
      \end{align}
      hold for all $\varphi \in C_c^\infty(\Ombarinf)$,

    \item
      for all $\phi \in C^\infty([0, \infty)^2)$ with $D \phi \in C_c^\infty([0, \infty)^2)$ and $\phi_{ww} \le 0$ in $[0, \infty)^2$,
      $\phi(v, w)$ is a weak $\phi$-supersolution of the corresponding subproblem in \eqref{system} in the sense that
      \begin{align}\label{eq:sol_concept:vw_phi_supersol}
        &\pe  - \intninfom \phi(v, w) \varphi_t
              - \intom \phi(v_0, w_0) \varphi(\cdot, 0) \notag \\
        &\ge  - \intninfom \big(d_2 \nabla v - \xi v \nabla u\big) \cdot \big(\phi_{vv}(v, w) \nabla v \varphi + \phi_{vw}(v, w) \nabla w \varphi + \phi_{v}(v, w) \nabla \varphi\big) \notag \\
        &\pe  - \intninfom \big(d_3 \nabla w - \chi w \nabla (uv)\big) \cdot \big(\phi_{vw}(v, w) \nabla v \varphi + \phi_{ww}(v, w) \nabla w \varphi + \phi_{w}(v, w) \nabla \varphi\big) \notag \\
        &\pe  + \intninfom g(u, v, w) \phi_v(v, w) \varphi
              + \intninfom h(u, v, w) \phi_w(v, w) \varphi
      \end{align}
      holds for all nonnegative $\varphi \in C_c^\infty(\Ombarinf)$ and

    \item
      there is a null set $N \subset (0, \infty)$ such that
      \begin{align}\label{eq:sol_def:mass_ineq}
        \intom w(\cdot, t) \le \intom w_0 + \int_0^t \intom h(u, v, w)
        \qquad \text{for all $t \in (0, \infty) \setminus N$.}
      \end{align}
  \end{itemize}  
\end{definition}

This concept is consistent with the notion of classical solutions.
\begin{lem}\label{lm:sol_concept_consistent}
  Let $\Omega \subset \R^2$ be a smooth, bounded domain, assume \eqref{eq:intro:def_f}--\eqref{eq:intro:def_h}, let $u_0, v_0, w_0 \in \leb1$ be nonnegative
  and let $(u, v, w)$ be a global generalized solution of \eqref{system} in the sense of Definition~\ref{def:sol_concept},
  which additionally fulfils $u, v, w \in C^{2, 1}(\Ombar \times (0, \infty)) \cap C^0(\Ombarinf)$.
  Then $(u, v, w)$ is also a classical solution of \eqref{system}.
\end{lem}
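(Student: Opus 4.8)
The plan is to promote the several weak and very-weak properties in Definition~\ref{def:sol_concept} to the classical equations, Neumann conditions and initial conditions, exploiting the extra regularity $u,v,w\in C^{2,1}(\Ombar\times(0,\infty))\cap C^0(\Ombarinf)$ together with the a~priori integrability $\nabla u,\nabla v,\mathds 1_{\{w\le k\}}\nabla w\in L^2_{\loc}(\Ombarinf)$ already built into the solution concept (note that on any set on which $w$ is bounded the last term equals $\nabla w$).

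I first deal with $u$ and $v$. Testing \eqref{eq:sol_concept:u_weak} with $\varphi\in C_c^\infty(\Omega\times(0,\infty))$ and integrating by parts (legitimate since $u\in C^{2,1}$) yields $u_t=d_1\Delta u+f(u,v,w)$ pointwise in $\Omega\times(0,\infty)$; admitting test functions not vanishing on $\partial\Omega$ then forces $\partial_\nu u=0$ on $\partial\Omega\times(0,\infty)$; and testing with a general $\varphi\in C_c^\infty(\Ombarinf)$, restricting the time integration to $(\tau,\infty)$, integrating by parts there and letting $\tau\searrow0$ --- where $\intom u(\cdot,\tau)\varphi(\cdot,\tau)\to\intom u(\cdot,0)\varphi(\cdot,0)$ by continuity and $\int_0^\tau\intom\nabla u\cdot\nabla\varphi\to0$ because $\nabla u\in L^2_{\loc}(\Ombarinf)$ --- gives $\intom(u(\cdot,0)-u_0)\varphi(\cdot,0)=0$, hence $u(\cdot,0)=u_0$ a.e. (so that $u_0$ admits a continuous representative). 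The same reasoning applies to $v$ in \eqref{eq:sol_concept:v_weak}, where additionally $\xi\intninfom v\nabla u\cdot\nabla\varphi=-\xi\intninfom\nabla\cdot(v\nabla u)\varphi$ since $\partial_\nu u=0$ and $v\nabla u$ is continuous with continuous divergence in $\Omega\times(0,\infty)$; this establishes the second line of \eqref{system} classically, with $\partial_\nu v=0$ and $v(\cdot,0)=v_0$, and in particular $\partial_\nu(uv)=u\partial_\nu v+v\partial_\nu u=0$ on $\partial\Omega\times(0,\infty)$.

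The core of the argument is the equation for $w$. I would insert into \eqref{eq:sol_concept:vw_phi_supersol} a family of admissible functions $\phi_k$ --- each concave in $w$, with $D\phi_k\in C_c^\infty([0,\infty)^2)$, coinciding with $(v,w)\mapsto w$ on $[0,k]^2$, and satisfying $\phi_k(v_0,w_0)=\zeta_k(w_0)$ wherever $v_0\le k$, with $\zeta_k\nearrow\mathrm{id}$ as $k\to\infty$; such $\phi_k$ are elementary to construct (e.g.\ by interpolating, through a cutoff in $v$, between a concave truncation $\zeta_k$ of the identity and a constant $c_k\ge k$), and the sign condition $\phi_{k,ww}\le0$ is inherited from the concavity of $\zeta_k$. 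For a fixed nonnegative $\varphi\in C_c^\infty(\Ombarinf)$, continuity of $v,w$ on $\supp\varphi$ and boundedness of $v_0$ (already known) let us pick $k$ so large that on the relevant sets all derivative terms of $\phi_k$ reduce to those of $(v,w)\mapsto w$; passing $k\to\infty$ afterwards --- via monotone convergence in the $\zeta_k(w_0)$-term --- turns \eqref{eq:sol_concept:vw_phi_supersol} into
\[
  -\intninfom w\varphi_t-\intom w_0\varphi(\cdot,0)\ge-d_3\intninfom\nabla w\cdot\nabla\varphi+\chi\intninfom w\nabla(uv)\cdot\nabla\varphi+\intninfom h(u,v,w)\varphi
\]
for all nonnegative $\varphi\in C_c^\infty(\Ombarinf)$. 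Integrating by parts (using $\partial_\nu(uv)=0$) and testing, in turn, with $\varphi$ supported in $\Omega\times(0,\infty)$, with $\varphi$ concentrating near $\partial\Omega\times(0,\infty)$, and with $\varphi$ concentrating near $t=0$ (again invoking $\nabla u,\nabla v,\mathds 1_{\{w\le k\}}\nabla w\in L^2_{\loc}(\Ombarinf)$ to control the temporal boundary layer) successively yields $R:=w_t-d_3\Delta w+\chi\nabla\cdot(w\nabla(uv))-h(u,v,w)\ge0$ in $\Omega\times(0,\infty)$, then $\partial_\nu w\ge0$ on $\partial\Omega\times(0,\infty)$, and finally $w(\cdot,0)\ge w_0$ a.e.

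The reverse inequalities then come from the mass control \eqref{eq:sol_def:mass_ineq}: the map $t\mapsto\intom w(\cdot,t)$ is continuous on $[0,\infty)$, of class $C^1$ on $(0,\infty)$ with derivative $d_3\int_{\partial\Omega}\partial_\nu w+\intom h(u,v,w)+\intom R\ge\intom h(u,v,w)$; integrating this and comparing with \eqref{eq:sol_def:mass_ineq} gives $\intom w(\cdot,0)\le\intom w_0$, and combined with $w(\cdot,0)\ge w_0$ a.e.\ this forces $w(\cdot,0)=w_0$ a.e., whence $\intom w(\cdot,t)=\intom w_0+\int_0^t\intom h(u,v,w)$ for all $t\ge0$ and therefore $d_3\int_{\partial\Omega}\partial_\nu w+\intom R\equiv0$. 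Since both summands are nonnegative and continuous, $R\equiv0$ and $\partial_\nu w\equiv0$, so $w$ solves the third line of \eqref{system} classically together with its boundary and initial conditions; together with the previous paragraph, $(u,v,w)$ is a classical solution. The step I expect to be the main obstacle is the construction and bookkeeping of the cutoff family $\phi_k$ --- keeping $D\phi_k$ compactly supported while maintaining $\phi_{k,ww}\le0$ and the identification $\phi_k(v,w)=w$ on large sets --- and, relatedly, the passages to the temporal boundary $t=0$, where only the $L^2_{\loc}(\Ombarinf)$-information on the gradients (and not pointwise control) is at our disposal.
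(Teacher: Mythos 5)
Your argument is correct and is essentially the proof the paper has in mind: the paper itself only refers to \cite[Lemma~5.3]{FuestStrongConvergenceWeighted2023} and \cite[Lemma~2.1]{WinklerLargedataGlobalGeneralized2015}, whose reasoning follows exactly your scheme (recover the $u$- and $v$-subproblems, boundary and initial conditions from \eqref{eq:sol_concept:u_weak}--\eqref{eq:sol_concept:v_weak}; insert a truncation family $\phi_k$ with $D\phi_k$ compactly supported and $\phi_{k,ww}\le 0$ into \eqref{eq:sol_concept:vw_phi_supersol} to conclude that $w$ is a classical supersolution with $\partial_\nu w\ge 0$ and $w(\cdot,0)\ge w_0$; then use the mass inequality \eqref{eq:sol_def:mass_ineq} to force equality). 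Your handling of the delicate points — the cutoff in $v$ with $c_k=\sup\zeta_k$ to keep $D\phi_k$ compactly supported while preserving concavity in $w$, and the use of the $L^2_{\loc}$ gradient information rather than pointwise bounds near $t=0$ — is exactly what is needed.
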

\begin{proof}
  This can be shown similarly as in \cite[Lemma~5.3]{FuestStrongConvergenceWeighted2023} and \cite[Lemma~2.1]{WinklerLargedataGlobalGeneralized2015}.
\end{proof}

\begin{rem}
 For the integral terms in Definition~\ref{def:sol_concept} to be well-defined, slightly less regularity would suffice; for example, one could require $\nabla v \in L^1$ and $\mathds 1_{\{v \le k\}} \nabla v\in L^2$ instead of $\nabla v \in L^2$. While we rely on the strong $L^2$ \textit{convergence} of $\mathds 1_{\{\ve \le k\}} \nabla \ve$ (cf.\ Lemma~\ref{lm:strong_grad_conv}), it is even more easily obtained that $\nabla v$ (without the additional cutoff) belongs to $L_{\loc}^2(\Ombarinf)$ (see \eqref{eq:eps_sea_0:nabla_v_l2}). On the other hand, posing an integrability condition on $\mathds 1_{\{w \le k\}} \nabla w$ and not on $\nabla w$ is crucial, as this allows to conclude the needed boundedness from an estimate of $\nabla \ln(w+1)$ instead of $\nabla w$ itself. (See Lemma~\ref{lm:nabla_we_l2} and \eqref{eq:eps_sea_0:nabla_ln_w_l2}.)
\end{rem}

\subsection{A priori estimates for solutions to an approximate problem}\label{sec:xi>0:apriori}
Henceforth, we fix a smooth, bounded domain $\Omega \subset \R^2$, the functions $f, g, h$ defined in \eqref{eq:intro:def_f}--\eqref{eq:intro:def_h}, parameters as in \eqref{eq:intro:params}, where $\xi > 0$,
and nonnegative initial data $u_0 \in \sob12$, $v_0 \in \leb2$ and $w_0 \in \leb1$.
For each $\eps \in (0, 1)$, this allows us to also fix nonnegative $\une, \vne, \wne \in \con\infty$ with $∂_{ν}\une=∂_{ν}\vne=∂_{ν}\wne=0$ on $∂\Om$ and such that
\begin{align}\label{eq:conv_init}
  (\une, \vne, \wne) \to (u_0, v_0, w_0) \qquad \text{in $\sob12 \times \leb2 \times \leb1$ as $\eps \sea 0$}.
\end{align}
Moreover, we fix $\sigma \in C^\infty(\R; [0, 1])$ with $\sigma(s) = 1$ for $s \le 0$ and $\sigma(s) = 0$ for $s \ge 1$
and set $\sigmae(s) = s \sigma(\eps s - 1)$ for $s \ge 0$ and $\eps \in (0, 1)$.
Then
\begin{align}\label{eq:sigmae_1}
  \sigmae(s) \begin{cases}
    = s, & s \le \frac1\eps, \\
    \in [0, s], & \frac1\eps \le s \le \frac2\eps, \\
    = 0, & s \ge \frac2\eps,
  \end{cases}
\end{align}
and
\begin{align}\label{eq:sigmae_2}
  |\sigmae'(s)| = |\sigma(\eps s - 1) + \eps s \sigma'(\eps s - 1)| \le 1 + 2 \|\sigma'\|_{C^0([0,1])}
\end{align}
for $s \ge 0$ and $\eps \in (0, 1)$.

Again for each $\eps \in (0, 1)$,
Lemma~\ref{lm:local_ex} then asserts that there exist $\tmaxe \in (0, \infty]$
and a nonnegative maximal classical solution $(\ue, \ve, \we) \in \big(C^{2, 1}(\Ombar \times (0, \tmaxe)) \cap C^0(\Ombar \times [0, \tmaxe))\big)^3$ of
\begin{align}\label{prob:approx}
  \begin{cases}
    \uet = d_1 \Delta \ue                                                     + f(\ue, \ve, \we) & \text{in $\Omega \times (0, \tmaxe)$}, \\
    \vet = d_2 \Delta \ve - \xi \nabla \cdot (\sigmae(\ve) \nabla \ue)        + g(\ue, \ve, \we) & \text{in $\Omega \times (0, \tmaxe)$}, \\
    \wet = d_3 \Delta \we - \chi \nabla \cdot (\sigmae(\we) \nabla (\ue \ve)) + h(\ue, \ve, \we) & \text{in $\Omega \times (0, \tmaxe)$}, \\
    \partial_\nu \ue = \partial_\nu \ve = \partial_\nu \we = 0                                   & \text{on $\partial \Omega \times (0, \tmaxe)$}, \\
    \ue(\cdot, 0) = \une, \ve(\cdot, 0) = \vne, \we(\cdot, 0) = \wne                             & \text{in $\Omega$}.
  \end{cases}  
\end{align}

As in Section~\ref{sec:xi=0}, the lack of a cross-diffusive term together with the structure of $f$ implies several a~priori estimates for the first solution component.
\begin{lem}\label{lm:ue_bdds}
  For all finite $T \in (0, \tmaxe]$, there exists $C > 0$ such that 
  \begin{align*}
    \sup_{t \in (0, T)} \|\ue(\cdot, t)\|_{\leb\infty} \le C, \quad
    \sup_{t \in (0, T)} \intom |\nabla \ue(\cdot, t)|^2 \le C \quad \text{and} \quad
    \intntom |\Delta \ue|^2 \le C
  \end{align*}
  for all $\eps \in (0, 1)$.
\end{lem}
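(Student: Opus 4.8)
The plan is to carry the reasoning of Lemma~\ref{lm:u_v_linfty} and Lemma~\ref{lm:u_v_w12} over to the approximate problem \eqref{prob:approx}, keeping every constant independent of $\eps$. What makes this essentially routine is that the first equation in \eqref{prob:approx} carries neither a cross-diffusive term nor the cut-off $\sigmae$, so it is genuinely
\[
  \uet = d_1 \Delta \ue + \ue(\lambda_1 - \mu_1 \ue - a_1 \ve - a_2 \we) \qquad \text{in } \Om \times (0, \tmaxe),
\]
and — recalling that $\ue, \ve, \we$ are nonnegative — the $\xi = 0$ arguments from Section~\ref{sec:xi=0} apply with hardly any change.

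First I would establish the $L^\infty$ bound. Since $\ve \ge 0$ and $\we \ge 0$, the displayed equation yields the pointwise inequality $\uet \le d_1 \Delta \ue + \ue(\lambda_1 - \mu_1 \ue)$, so that — exactly as in the proof of Lemma~\ref{lm:u_v_linfty} — comparison with the spatially homogeneous solution of the logistic ODE gives $\ue \le \max\{\|\une\|_{\leb\infty}, \f{\lambda_1}{\mu_1}\}$ throughout $\Om \times (0, \tmaxe)$; by the choice of the regularized initial data underlying \eqref{eq:conv_init} the right-hand side is bounded uniformly in $\eps \in (0, 1)$, which proves the first estimate.

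Equipped with this uniform $L^\infty$ bound, I would next test the first equation with $-\Delta \ue$. Because $\delny \ue = 0$ on $\dOm$ the boundary terms drop out, and after absorbing half of $\io |\Delta \ue|^2$ by Young's inequality one arrives at
\[
  \f12 \ddt \io |\na \ue|^2 + \f{d_1}{2} \io |\Delta \ue|^2 \le \f{1}{2d_1} \io \ue^2 (\lambda_1 - \mu_1 \ue - a_1 \ve - a_2 \we)^2 \le C \, \|\ue\|_{\leb\infty}^2 \io (1 + \ue^2 + \ve^2 + \we^2)
\]
on $(0, \tmaxe)$. Integrating over $(0, T)$ and invoking the space-time $L^2$ bound \eqref{eq:u_v_w_l1:l2l2} of Lemma~\ref{lm:u_v_w_l1} — which applies to \eqref{prob:approx} since $0 \le \sigmae \le \mathrm{id}$ by \eqref{eq:sigmae_1} and since $\sup_{\eps \in (0,1)} \io (\une + \vne + \wne) < \infty$ by \eqref{eq:conv_init} — together with the uniform bound $\sup_{\eps \in (0,1)} \io |\na \une|^2 < \infty$ (again from \eqref{eq:conv_init}) controlling the initial contribution, one reads off the remaining two estimates, once more with a constant independent of $\eps$.

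I do not expect a deep obstacle here: this is the cheapest layer of a~priori information, and the only genuine task is the bookkeeping that keeps all constants $\eps$-free, i.e.\ controlling every occurring quantity either through the fixed limit data via \eqref{eq:conv_init} or through the $\eps$-independent constants furnished by Lemma~\ref{lm:u_v_w_l1} and Lemma~\ref{lm:gni_comb}. If anything calls for a moment's care, it is the uniformity in $\eps$ of the $L^\infty$ bound as $t \to 0$, which hinges on the regularization $\une$ of the initial datum rather than on an estimate for the solution itself; the real difficulties of the paper — the poor a~priori control of $\ve$ and $\we$ and the strong convergence of the weighted gradients — surface only in the subsequent subsections.
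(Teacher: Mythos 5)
Your proof is essentially the paper's: the paper disposes of this lemma by remarking that it can be shown as in Lemma~\ref{lm:u_v_linfty} and Lemma~\ref{lm:u_v_w12}, i.e.\ precisely your comparison-principle bound for $\ue$ followed by testing the first equation in \eqref{prob:approx} with $-\Delta \ue$ and invoking the $\eps$-independent bounds from Lemma~\ref{lm:u_v_w_l1} and \eqref{eq:conv_init}. The one caveat --- which you share with the paper rather than introduce --- is that uniform-in-$\eps$ control of $\|\une\|_{\leb\infty}$ is not literally supplied by the $\sob12$-convergence in \eqref{eq:conv_init} (in two dimensions $\sob12 \not\embed \leb\infty$), so it has to be read as an additional property built into the choice of the approximating initial data.
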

\begin{proof}
  This can be shown as in Lemma~\ref{lm:u_v_linfty} and Lemma~\ref{lm:u_v_w12}.
\end{proof}

When $\xi$ is positive, however, bounds for the second solution component are not as easily obtained as in Section~\ref{sec:xi=0};
both the comparison principle used in Lemma~\ref{lm:u_v_linfty} and the testing procedure employed in Lemma~\ref{lm:u_v_w12} are no longer applicable.
Fortunately, we can at least make use of Lemma~\ref{lm:gen_l2_bdd} to obtain the following
\begin{lem}\label{lm:ve_bdds}
  For all finite $T \in (0, \tmaxe]$, there exists $C > 0$ such that
  \begin{align*}
    \sup_{t \in (0, T)} \intom \ve^2(\cdot, t) \le C, \quad
    \intntom \ve^3 \le C \quad \text{and} \quad
    \intntom |\nabla \ve|^2 \le C
  \end{align*}
  for all $\eps \in (0, 1)$.
\end{lem}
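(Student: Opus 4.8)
The plan is to recognise the second equation in the approximate system \eqref{prob:approx} as a special case of the convection--diffusion problem \eqref{prob:z} and to invoke Lemma~\ref{lm:gen_l2_bdd}. Indeed, that equation reads $\vet = d_2 \Delta \ve - \nabla \cdot (\sigmae(\ve) \nabla (\xi \ue)) + g(\ue, \ve, \we)$, so I would apply Lemma~\ref{lm:gen_l2_bdd} with $z \defs \ve$, $d \defs d_2$, $\sigma \defs \sigmae$, $\psi_1 \defs \xi \ue$ and $\psi_2 \defs g(\ue, \ve, \we)$. Verifying the hypotheses is short: $0 \le \sigmae \le \mathrm{id}$ by \eqref{eq:sigmae_1}; the Neumann condition in \eqref{prob:approx} yields $\partial_\nu(\xi \ue) = 0$ on $\partial\Omega$; for each fixed finite $T \le \tmaxe$ the first two bounds of Lemma~\ref{lm:ue_bdds} give, uniformly in $\eps$, $\sup_{t\in(0,T)}\intom |\nabla(\xi\ue)|^2 \le M$ and $\intntom |\Delta(\xi\ue)|^2 \le M$, while \eqref{eq:conv_init} forces $\sup_{\eps\in(0,1)}\intom \vne^2 < \infty$; and $\vne \ge 0$ together with the nonnegativity of $\ve$ from Lemma~\ref{lm:local_ex} completes the list. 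Lemma~\ref{lm:gen_l2_bdd} thus supplies an $\eps$-independent $C > 0$ with
\[
  \sup_{t\in(0,T)}\intom \ve^2(\cdot,t) + \intntom |\nabla\ve|^2 + \intntom \ve\,\big(g(\ue,\ve,\we)\big)_- \le C\intntom \ve\,\big(g(\ue,\ve,\we)\big)_+ + C .
\]

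The point is then that the logistic structure of $g$ renders the right-hand side harmless. Abbreviating $g = g(\ue,\ve,\we)$ and setting $K \defs \sup_{\eps\in(0,1)}\|\ue\|_{L^\infty(\Omega\times(0,T))}$, which is finite by Lemma~\ref{lm:ue_bdds}, and using $\we \ge 0$, $a_3 > 0$, one obtains the \emph{pointwise} estimate $\ve\,(g)_+ = \ve^2(\lambda_2 - \mu_2\ve + b_1\ue - a_3\we)_+ \le \ve^2(\lambda_2 + b_1K - \mu_2\ve)_+ \le (\lambda_2 + b_1K)^3/\mu_2^2$, since $s^2(a - \mu_2 s)_+ \le a^3/\mu_2^2$ for all $s, a \ge 0$. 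Hence $\intntom \ve(g)_+ \le |\Omega|\,T\,(\lambda_2 + b_1K)^3/\mu_2^2$, and the first two asserted bounds now read off from the displayed inequality.

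For the bound on $\intntom \ve^3$ I would keep the summand $\intntom \ve(g)_-$ on the left. Again using $\we \ge 0$, $\ue \le K$ and $s_+ \ge s$, one has $\ve\,(g)_- = \ve^2(\mu_2\ve + a_3\we - \lambda_2 - b_1\ue)_+ \ge \mu_2\ve^3 - (\lambda_2 + b_1K)\ve^2$, so that $\mu_2\intntom \ve^3 \le \intntom \ve(g)_- + (\lambda_2 + b_1K)\intntom \ve^2$; both terms on the right have just been bounded (the last one via $\intntom \ve^2 \le T\sup_{t\in(0,T)}\intom \ve^2$), which finishes the proof. (The same bound could alternatively be deduced from $\sup_{t\in(0,T)}\intom \ve^2$, $\intntom |\nabla\ve|^2$ and the Gagliardo--Nirenberg inequality in two dimensions.)

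I do not anticipate a genuine obstacle: the hard analytic work -- absorbing $\intom \ve^2|\nabla\ue|^2$ into $\intom |\nabla\ve|^2$ plus lower-order terms via a Gagliardo--Nirenberg/Ladyzhenskaya argument -- has been packaged into Lemma~\ref{lm:gni_comb} and Lemma~\ref{lm:gen_l2_bdd}, and the requisite bounds on $\ue$ (available precisely because the first equation lacks a cross-diffusive term) are Lemma~\ref{lm:ue_bdds}. The only subtlety worth flagging is that the quadratic absorption $-\mu_2\ve^2$ in $g$ does double duty here: it is what makes $\ve(g)_+$ \emph{bounded}, so that Lemma~\ref{lm:gen_l2_bdd} is genuinely informative rather than self-referential, and, through $\ve(g)_-$, it is also the source of the cubic absorption underlying the $\intntom\ve^3$ estimate.
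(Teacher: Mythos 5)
Your proposal is correct and follows essentially the same route as the paper: apply Lemma~\ref{lm:gen_l2_bdd} to the second equation of \eqref{prob:approx} with $\psi_1 = \xi\ue$, $z_0 = \vne$, $\psi_2 = g(\ue,\ve,\we)$ (hypotheses supplied by Lemma~\ref{lm:ue_bdds}, \eqref{eq:conv_init} and \eqref{eq:sigmae_1}), and then exploit the logistic structure of $g$ to control the right-hand side and to extract the cubic absorption term. The only, inessential, difference is that you bound $\intntom \ve (g)_+$ by the pointwise estimate $s^2(\lambda_2+b_1K-\mu_2 s)_+ \le (\lambda_2+b_1K)^3/\mu_2^2$, whereas the paper estimates it by $(\lambda_2+b_1K)\intntom \ve^2$ using \eqref{eq:u_v_w_l1:l2l2}.
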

\begin{proof}
  As Lemma~\ref{lm:ue_bdds} and \eqref{eq:conv_init} provide $\eps$-independent bounds for the quantities in \eqref{eq:gen_l2_bdd:cond} (with $\psi_1 = \ue$ and $z_0 = \vne$),
  we can apply Lemma~\ref{lm:gen_l2_bdd} to obtain $c_1 > 0$ such that
  \begin{align*}
    \sup_{t \in (0, T)} \intom \ve^2(\cdot, t) + \intntom |\nabla \ve|^2 + \intntom \ve^2 (\mu_1 \ve + a_3 \we)
    \le c_1 \intntom \ve^2 (\lambda + b_1 \ue)
  \end{align*}
  for all $\eps \in (0, 1)$. As the right-hand side is bounded by Lemma~\ref{lm:u_v_w_l1} and Lemma~\ref{lm:ue_bdds}, this yields the desired estimates.
\end{proof}

Before collecting further $\eps$-independent a~priori estimates, we briefly state that the approximate solutions exist globally.
To that end, we already make use of Lemma~\ref{lm:ue_bdds}.
\begin{lem}\label{lm:globale_ex_eps}
  For all $\eps \in (0, 1)$, we have $\tmaxe=\infty$; that is, the solution $(\ue, \ve, \we)$ of \eqref{prob:approx} is global in time. 
\end{lem}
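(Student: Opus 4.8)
The plan is to show that none of the three quantities in the extensibility criterion \eqref{eq:local_ex:ext} can blow up in finite time, for each fixed $\eps \in (0,1)$. The crucial simplification compared to the $\eps$-independent theory is that here we are allowed to use $\eps$ in the estimates: the truncations $\sigmae(\ve)$ and $\sigmae(\we)$ are bounded (by $\frac{2}{\eps}$, see \eqref{eq:sigmae_1}), so the cross-diffusive terms become genuinely lower-order perturbations. First I would invoke Lemma~\ref{lm:ue_bdds}, which already gives, for any finite $T \in (0, \tmaxe]$, bounds on $\sup_{t \in (0,T)} \norm[\leb\infty]{\ue(\cdot,t)}$, on $\sup_{t \in (0,T)} \intom |\nabla \ue|^2$ and on $\intntom |\Delta \ue|^2$; in particular $\nabla(\ue\ve)$ and $\nabla \ue$ are under control once $\ve$ is, and $\ue$ itself never obstructs extensibility.

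Next I would treat $\ve$. Testing the second equation in \eqref{prob:approx} with $-\Delta \ve$ and using $|\sigmae(\ve)| \le \tfrac2\eps$ together with Young's inequality gives
\begin{align*}
  \frac12 \ddt \intom |\nabla \ve|^2
  \le -\frac{d_2}{2} \intom |\Delta \ve|^2
      + \frac{\xi^2}{d_2}\cdot\frac{4}{\eps^2} \intom |\nabla \ue|^2
      + c \intom (1 + \ue + \ve + \we)^2 \ve^2,
\end{align*}
so that, after controlling the last integral via the already-known $\leb\infty$ bound on $\ue$ and $\leb2$-in-time bounds on $\ve$ and $\we$ (Lemma~\ref{lm:u_v_w_l1}, Lemma~\ref{lm:ue_bdds}, Lemma~\ref{lm:ve_bdds}) — here one may need an extra Gagliardo--Nirenberg interpolation of $\intom \we^2\ve^2$ absorbed into $\intom|\Delta\ve|^2$, using Lemma~\ref{lm:gni_comb} and the space-time $L^2$ bound for $\we$ from Lemma~\ref{lm:u_v_w_l1} — one obtains $\eps$-dependent bounds on $\sup_{t \in (0,T)} \intom |\nabla \ve|^2$ and on $\intntom |\Delta \ve|^2$. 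A Moser-type iteration, or a semigroup argument exactly as in the proof of Lemma~\ref{lm:w_linfty} (the smoothing estimates for $\ure^{td_2\Delta}$ stated there apply verbatim, with the lower-order forcing term $\xi \nabla \cdot (\sigmae(\ve)\nabla\ue)$ now bounded in $L^q$ thanks to the $\Delta\ue$-bound and boundedness of $\sigmae$), then upgrades this to $\sup_{t\in(0,T)}\norm[\leb\infty]{\ve(\cdot,t)} \le C(\eps)$ and hence to a bound on $\norm[\sob1q]{\ve(\cdot,t)}$ for some $q > 2$.

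Finally, with $\ue$ and $\ve$ now bounded in $L^\infty$ and in $\sob1q$ for some $q > 2$, the quantity $\nabla(\ue\ve)$ is bounded in $\leb q$ uniformly on $(0,T)$, and $\sigmae(\we) \le \tfrac2\eps$ is bounded, so the third equation in \eqref{prob:approx} is of precisely the form handled in the bootstrap part of the proof of Lemma~\ref{lm:w_linfty}: a variation-of-constants estimate for $\ure^{td_3\Delta}$ with the forcing terms $\chi\nabla\cdot(\sigmae(\we)\nabla(\ue\ve))$ (bounded in $L^q$ after pulling out $\sigmae(\we)$, or more safely estimated as $\norm[\leb q]{\sigmae(\we)\nabla(\ue\ve)} \le \tfrac2\eps\norm[\leb q]{\nabla(\ue\ve)}$) and $h(\ue,\ve,\we)$ yields a self-improving inequality $M(t) \le c + c\,M^{1-\delta}(t)$ for $M(t) = 1 + \sup_{s\in(0,t)}\norm[\leb\infty]{\we(\cdot,s)}$, giving a finite-in-time $L^\infty$ bound on $\we$. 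Together, the three $\leb\infty$ / $\sob1q$ bounds contradict \eqref{eq:local_ex:ext} unless $\tmaxe = \infty$. The main obstacle is the $\ve$-step: one must verify that the quadratic interaction terms (especially $\intom\we^2\ve^2$, where only a space-time $L^2$ bound on $\we$ is available a priori) can be absorbed, and that the semigroup bootstrap for $\ve$ indeed closes — here the boundedness of $\sigmae$ and the already-established $\Delta\ue$-estimate are what make the argument go through, which is precisely why this lemma is easy for fixed $\eps$ but the analogous $\eps$-uniform statement is not available.
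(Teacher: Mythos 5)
Your overall strategy (for fixed $\eps$ the cutoffs make the cross-diffusion lower order, so bootstrap regularity and contradict \eqref{eq:local_ex:ext}) is reasonable, but it is not the paper's route and, as written, it has genuine gaps. The paper's proof is much shorter: since $\sigmae \equiv 0$ on $[\tfrac2\eps,\infty)$ by \eqref{eq:sigmae_1}, the \emph{constant} function $\ol\ve \equiv \max\{\|\vne\|_{\leb\infty}, \tfrac2\eps, \tfrac{\lambda_2 + b_1 c_1}{\mu_2}\}$ (with $c_1$ the $L^\infty$ bound for $\ue$) is a supersolution of the second equation because the taxis term vanishes identically at that height, and then an analogous constant is a supersolution for the third equation; hence all components are bounded in $L^\infty(\Omega\times(0,\tmaxe))$, and a maximal Sobolev regularity bootstrap (rather than Moser/semigroup iterations) yields $(\ue,\ve,\we)\in L^\infty((0,\tmaxe);\con1\times\con1\times\con0)$, contradicting \eqref{eq:local_ex:ext}. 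Your proposal misses this simple comparison argument and instead attempts energy estimates whose closure is exactly where the problems lie.

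Concretely: (i) your displayed inequality after testing with $-\Delta\ve$ is not derivable. Since $\nabla\cdot(\sigmae(\ve)\nabla\ue) = \sigmae'(\ve)\nabla\ve\cdot\nabla\ue + \sigmae(\ve)\Delta\ue$, the cutoff controls the second summand by $\tfrac2\eps|\Delta\ue|$ (so the resulting term is $\eps^{-2}\intom|\Delta\ue|^2$, not $\eps^{-2}\intom|\nabla\ue|^2$), and the mixed term $\sigmae'(\ve)\nabla\ve\cdot\nabla\ue$ is missing altogether; it requires its own interpolation (e.g.\ \eqref{eq:gni_comb:comb} applied to components of $\nabla\ve$, plus elliptic control of $\|D^2\ve\|_{\leb2}$ by $\|\Delta\ve\|_{\leb2}$). (ii) More seriously, the term $\intom \we^2\ve^2$ cannot be absorbed with only the space-time bound $\intntom\we^2\le C$ from Lemma~\ref{lm:u_v_w_l1}: Agmon/Gagliardo--Nirenberg gives $\intom\we^2\ve^2 \le C\|\we\|_{\leb2}^2\|\ve\|_{\leb2}\big(\|\Delta\ve\|_{\leb2}+\|\ve\|_{\leb2}\big)$, and after Young you need $\int_0^T\|\we\|_{\leb2}^4\,\mathrm dt$, which is not available at this stage; your ordering of steps would have to be reversed, first obtaining $\sup_t\intom\we^2\le C(\eps)$ (for fixed $\eps$ this can be done by testing the third equation with $\we$ and using $\sigmae(\we)\le\tfrac2\eps$, \eqref{eq:gni_comb:comb} and $\intntom\ve^3\le C$), or one simply uses the comparison trick above. (iii) To contradict \eqref{eq:local_ex:ext} you also need $\sup_{t}\|\ue(\cdot,t)\|_{\sob1q}<\infty$ for some $q>2$, which Lemma~\ref{lm:ue_bdds} does not give; this again requires knowing $f(\ue,\ve,\we)\in L^\infty((0,T);\leb2)$, i.e.\ a time-uniform $L^2$ bound on $\we$, before the semigroup estimate as in \eqref{eq:w_linfty:u_est} applies. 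For the same reason the proposed semigroup upgrade of $\ve$ is not immediate: the forcing $\nabla\cdot(\sigmae(\ve)\nabla\ue)$ is controlled only through $\Delta\ue\in L^2(\Omega\times(0,T))$ resp.\ $\nabla\ue\in L^\infty((0,T);\leb2)$, not in the $L^\infty$-in-time $\leb q$-type spaces ($q>2$) the Duhamel estimates require.
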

\begin{proof}
  Suppose there is $\eps \in (0, 1)$ such that $\tmaxe$ is finite.
  According to Lemma~\ref{lm:ue_bdds}, there is $c_1 > 0$ with $\ue \le c_1$ in $\Omega \times (0, \tmaxe)$.
  As $\sigmae(s) = 0$ for $s \ge \frac{2}{\eps}$ by \eqref{eq:sigmae_1},
  the constant function $\ol{\ve} \equiv \max \{\|\vne\|_{\leb \infty}, \frac{2}{\eps}, \frac{\lambda_2 + b_1 c_1}{\mu_2}\}$ is a supersolution to the second equation in \eqref{prob:approx}.
  Similarly, $\ol{\we} \equiv \max \{\|\wne\|_{\leb \infty}, \frac{2}{\eps}, \frac{\lambda_3 + b_2 c_1 + b_3 \ol \ve}{\mu_3}\}$ is a supersolution to the third equation in \eqref{prob:approx},
  so that all solution components and thus also the zeroth order terms in \eqref{prob:approx} are bounded in $L^\infty(\Omega \times (0, \tmaxe))$.

  Next, maximal Sobolev regularity (cf.\ \cite[Theorem~2.3]{GigaSohrAbstractEstimatesCauchy1991}) asserts that both $\uet$ and $\Delta \ue$ belong to $X \defs L^{12}(\Omega \times (0, \tmaxe))$,
  whence $\ue \in Y \defs C^{\frac53, \frac56}(\Ombar \times [0, \tmaxe])$ by \cite[Lemma~II.3.3]{LadyzenskajaEtAlLinearQuasilinearEquations1988}.
  Since then $\ve \Delta \ue \in X$ and $\nabla \ue \in L^\infty((0, \tmaxe); L^{12}(\Omega))$,
  we may apply a consequence of maximal Sobolev regularity theory, namely \cite[Lemma~2.13]{FuestEtAlGlobalExistenceClassical2023},
  to also obtain $\vet, \Delta \ve \in X$. Again due to \cite[Lemma~II.3.3]{LadyzenskajaEtAlLinearQuasilinearEquations1988}, this gives $\ve \in Y$,
  so that, in conclusion, $(\ue, \ve, \we) \in L^\infty((0, \tmaxe); \con1 \times \con1 \times \con0)$.
  However, this contradicts \eqref{eq:local_ex:ext}, hence $\tmaxe$ cannot be finite.
\end{proof}

As a consequence of Lemma~\ref{lm:ue_bdds} and Lemma~\ref{lm:ve_bdds},
we see that the second equation in \eqref{prob:approx} can be written as a heat equation with a force term uniformly bounded in $L_{\loc}^{6/5}(\Ombarinf)$.
\begin{lem}\label{lm:nabla_ve_65}
  For all $T > 0$, there exists $C > 0$ such that
  \begin{align}\label{eq:nabla_ve_65:statement}
    \intntom |-\xi \nabla \cdot (\sigmae(\ve) \nabla \ue) + g(\ue, \ve, \we)|^\frac65 \le C
    \qquad \text{for all $\eps \in (0, 1)$}.
  \end{align}
\end{lem}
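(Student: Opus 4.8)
The plan is to expand the divergence term via the chain rule and then bound each resulting summand in $L^{6/5}(\Om \times (0,T))$ separately by Hölder's inequality, using only the a~priori estimates collected so far. Writing
\[
  -\xi \nabla \cdot (\sigmae(\ve) \nabla \ue) = -\xi\, \sigmae'(\ve)\, \nabla \ve \cdot \nabla \ue - \xi\, \sigmae(\ve)\, \Delta \ue,
\]
recalling from \eqref{eq:sigmae_1}--\eqref{eq:sigmae_2} that $0 \le \sigmae(s) \le s$ and $|\sigmae'(s)| \le 1 + 2\|\sigma'\|_{C^0([0,1])}$ for all $s \ge 0$ and $\eps \in (0,1)$, expanding $g(\ue, \ve, \we) = \lambda_2 \ve - \mu_2 \ve^2 + b_1 \ue \ve - a_3 \ve \we$, and using the elementary inequality $(a_1 + \dots + a_6)^{6/5} \le 6^{1/5}(a_1^{6/5} + \dots + a_6^{6/5})$ for nonnegative reals (the constants $\xi, \lambda_2, \ldots$ being harmless), it suffices to find $\eps$-independent bounds for
\[
  \intntom |\nabla \ve|^\frac65 |\nabla \ue|^\frac65, \qquad \intntom \ve^\frac65 |\Delta \ue|^\frac65, \qquad \intntom \ve^\frac65, \qquad \intntom \ve^\frac{12}{5}, \qquad \intntom (\ue \ve)^\frac65, \qquad \intntom (\ve \we)^\frac65.
\]

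The only preparatory observation is that $\nabla \ue$ is in fact bounded in a space better than $L^2$: combining \eqref{eq:gni_comb:lady} of Lemma~\ref{lm:gni_comb} with Lemma~\ref{lm:ue_bdds} yields
\[
  \intntom |\nabla \ue|^4 \le C \sup_{t \in (0,T)} \Big( \intom |\nabla \ue(\cdot,t)|^2 \Big) \intntom |\Delta \ue|^2 \le C,
\]
so that, since $\Om \times (0,T)$ has finite measure, $\nabla \ue$ is bounded in $L^p(\Om \times (0,T))$ for every $p \in [1,4]$, in particular in $L^3$. Besides this, Lemma~\ref{lm:ue_bdds} supplies the uniform bounds $\|\ue\|_{L^\infty(\Om \times (0,T))} \le C$ and $\intntom |\Delta \ue|^2 \le C$, Lemma~\ref{lm:ve_bdds} gives $\intntom \ve^3 \le C$ and $\intntom |\nabla \ve|^2 \le C$, and --- since $0 \le \sigmae \le \mathrm{id}$ and the initial data are bounded in $\leb1$ uniformly in $\eps$ by \eqref{eq:conv_init}, while $\tmaxe = \infty$ by Lemma~\ref{lm:globale_ex_eps} --- Lemma~\ref{lm:u_v_w_l1} is applicable to $(\ue, \ve, \we)$ and provides $\intntom \we^2 \le C$.

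With these bounds at hand, each of the six integrals is controlled by a single application of Hölder's inequality in space-time. For the mixed gradient term one splits with exponents $\tfrac53$ and $\tfrac52$ to reduce to $\intntom |\nabla \ve|^2$ and $\intntom |\nabla \ue|^3$; for $\ve^{6/5}|\Delta \ue|^{6/5}$ one uses the exponents $\tfrac52$ and $\tfrac53$, landing on $\intntom \ve^3$ and $\intntom |\Delta \ue|^2$; and for $(\ve\we)^{6/5}$ one splits analogously into $\intntom \ve^3$ and $\intntom \we^2$. The remaining three integrals are immediate consequences of the $L^3$-bound on $\ve$, the $L^\infty$-bound on $\ue$ and the finiteness of $|\Om \times (0,T)|$. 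Adding up the six contributions gives \eqref{eq:nabla_ve_65:statement}. There is no genuine obstacle here --- the lemma is essentially bookkeeping --- but the one point that needs a little care is the mixed term $\nabla \ve \cdot \nabla \ue$: the bare $L^2$ space-time estimate for $\nabla \ve$ does not suffice on its own, and it is precisely the Ladyzhenskaya-type gain $\nabla \ue \in L^4(\Om \times (0,T))$ extracted above that makes the exponent $\tfrac65$ (rather than some smaller exponent) work; indeed, the value $\tfrac65$ throughout is dictated by this term.
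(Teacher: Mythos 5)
Your proposal is correct and follows essentially the same route as the paper: expand the divergence term via $\sigmae'(\ve)\nabla\ve\cdot\nabla\ue + \sigmae(\ve)\Delta\ue$, gain $\nabla\ue\in L^4(\Om\times(0,T))$ (hence $L^3$) from the Ladyzhenskaya-type inequality \eqref{eq:gni_comb:lady} together with Lemma~\ref{lm:ue_bdds}, and control every summand by Hölder using Lemma~\ref{lm:u_v_w_l1}, Lemma~\ref{lm:ue_bdds} and Lemma~\ref{lm:ve_bdds}. The only difference is cosmetic (you apply Hölder termwise to the $\tfrac65$-powers, while the paper estimates the $L^{6/5}$ norms and then invokes Young's inequality), so nothing further is needed.
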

\begin{proof}
  By Hölder's and Young's inequalities as well as \eqref{eq:sigmae_2}, we have
  \begin{align*}
    &\pe  \|-\xi \nabla \cdot (\sigmae(\ve) \nabla \ue) + g(\ue, \ve, \we)\|_{\leb[Q_T]{6/5}} \\
    &\le  \xi \|\sigmae'(\ve) \nabla \ue \cdot \nabla \ve\|_{\leb[Q_T]{6/5}}
          + \xi \|\sigmae(\ve) \Delta \ue\|_{\leb[Q_T]{6/5}}
          + c_1 \|\ve(1 + \ue + \ve + \we)\|_{\leb[Q_T]{6/5}} \\
    &\le  c_2 \left( \|\nabla \ue\|_{\leb[Q_T]{3}}^3 +  \|\nabla \ve\|_{\leb[Q_T]{2}}^2 \right)
          + \xi \left( \|\ve\|_{\leb[Q_T]{3}}^3 + \|\Delta \ue\|_{\leb[Q_T]{2}}^2 \right)\\
    &\pe  + c_1 \left( \|\ve\|_{\leb[Q_T]{3}}^3 + \|1 + \ue + \ve + \we\|_{\leb[Q_T]{2}}^2 \right)
  \end{align*}
  for all $\eps \in (0, 1)$ and some $c_1, c_2 > 0$, where $Q_T \defs \Omega \times (0, T)$.
  As moreover
  \begin{align*}
        \|\nabla \ue\|_{\leb[Q_T]{3}}^3
    \le \|\nabla \ue\|_{\leb[Q_T]{4}}^4 + c_3
    \le c_4 \|\ue\|_{L^\infty((0, T); \leb2)}^2 \|\Delta \ue\|_{\leb[Q_T]{2}}^2 + c_3
  \end{align*}
  for all $\eps \in (0, 1)$ and some $c_3, c_4 > 0$ by Young's inequality and \eqref{eq:gni_comb:lady},
  we conclude \eqref{eq:nabla_ve_65:statement} upon applying Lemma~\ref{lm:ue_bdds}, Lemma~\ref{lm:ve_bdds} and Lemma~\ref{lm:u_v_w_l1}.
\end{proof}

Although the second and third equation in \eqref{prob:approx} are structurally similar in some sense,
we cannot repeat the reasoning in Lemma~\ref{lm:ve_bdds} (nor the one in Lemma~\ref{lm:w_l2}) --
our known $\eps$-independent bounds for $uv$ are much worse than for $u$.
Instead, we shall employ a testing procedure in order to obtain a~priori estimates for the gradient of $\we$
which, while weaker than those for the other solution components, will still turn out to be sufficient for our purposes.
\begin{lem}\label{lm:nabla_we_l2}
  For all $T \in (0, \infty)$, there exists $C > 0$ such that 
  \begin{align*}
    \intntom |\nabla (\ue \ve)|^2 \le C \quad \text{and} \quad \intntom \frac{|\nabla \we|^2}{(\we+1)^2} \le C
    \qquad \text{for all $\eps \in (0, 1)$}.
  \end{align*}
\end{lem}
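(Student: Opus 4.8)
The plan is to derive the two estimates by testing the third equation in \eqref{prob:approx} with a suitable function and, separately, estimating the gradient of the product $\ue\ve$. First I would establish the bound for $\nabla(\ue\ve)$: since $\nabla(\ue\ve) = \ve\nabla\ue + \ue\nabla\ve$, and $\ue$ is bounded in $L^\infty(\Omega\times(0,T))$ with $\nabla\ue$ bounded in $L^\infty((0,T);\leb2)$ by Lemma~\ref{lm:ue_bdds}, while $\ve$ is bounded in $L^\infty((0,T);\leb2)$ and $\nabla\ve$ in $L^2(Q_T)$ by Lemma~\ref{lm:ve_bdds}, the term $\ue\nabla\ve$ is directly controlled in $L^2(Q_T)$. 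For $\ve\nabla\ue$ I would use the Gagliardo--Nirenberg-type inequality \eqref{eq:gni_comb:lady} together with the $L^\infty$-$L^2$ bound on $\nabla\ue$ and the $L^2$-$L^2$ bound on $\Delta\ue$ to get $\nabla\ue\in L^4(Q_T)$, and combine with $\ve\in L^4(Q_T)$ (interpolating between the $L^\infty$-$L^2$ bound and the $L^3$-$L^3$ bound from Lemma~\ref{lm:ve_bdds}, or more simply using $\ve\in L^4(Q_T)$ via Gagliardo--Nirenberg from $\sup_t\|\ve\|_{\leb2}$ and $\nabla\ve\in L^2$) to bound $\ve\nabla\ue$ in $L^2(Q_T)$ by Hölder.

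Next I would turn to the gradient estimate for $\we$. The natural quantity is $\io \ln(\we+1)$ or, to obtain the stated form, I would test the third equation against $-\frac{1}{\we+1}$ (equivalently, differentiate $\ddt\io\ln(\we+1)$ along the flow). This yields
\begin{align*}
  \ddt \io \ln(\we+1)
  &= -d_3 \io \frac{|\nabla\we|^2}{(\we+1)^2}
     + \chi \io \frac{\sigmae(\we)}{(\we+1)^2}\nabla(\ue\ve)\cdot\nabla\we
     + \io \frac{h(\ue,\ve,\we)}{\we+1}.
\end{align*}
Using $\sigmae(\we)\le\we\le\we+1$ and Young's inequality, the cross term is absorbed: $\chi\frac{\sigmae(\we)}{(\we+1)^2}|\nabla(\ue\ve)||\nabla\we| \le \frac{d_3}{2}\frac{|\nabla\we|^2}{(\we+1)^2} + c|\nabla(\ue\ve)|^2$. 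For the reaction term, $\frac{h(\ue,\ve,\we)}{\we+1} = \frac{\we}{\we+1}(\lambda_3-\mu_3\we+b_2\ue+b_3\ve) \le \lambda_3 + b_2\ue + b_3\ve$, which is bounded in $L^1(Q_T)$ by Lemma~\ref{lm:u_v_w_l1} and Lemma~\ref{lm:ue_bdds} (indeed $\ue\in L^\infty$). Integrating over $(0,T)$, noting $\io\ln(\we+1)\ge 0$ and that $\io\ln(\wne+1)\le\io\wne$ is bounded by the $L^1$ convergence \eqref{eq:conv_init}, and inserting the just-established bound on $\nabla(\ue\ve)$ in $L^2(Q_T)$, gives the claimed estimate for $\int_0^T\io\frac{|\nabla\we|^2}{(\we+1)^2}$.

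The main obstacle, and the reason the two estimates must be done together in this order, is that the cross-diffusive term in the $\we$-equation carries $\nabla(\ue\ve)$, so the $\we$-gradient bound genuinely depends on having $\nabla(\ue\ve)\in L^2(Q_T)$ first; this in turn relies essentially on the Ladyzhenskaya-type inequality \eqref{eq:gni_comb:lady} to upgrade $\nabla\ue$ from $L^\infty$-$L^2$ to $L^4(Q_T)$, which is exactly the step where the two-dimensionality of $\Omega$ is used. A secondary subtlety is that only the degenerate weight $\frac{1}{(\we+1)^2}$ (rather than a bound on $\nabla\we$ itself) is attainable here, since no better-than-quadratic absorption is available in the $\we$-equation; this is precisely why the solution concept in Definition~\ref{def:sol_concept} asks for $\mathds 1_{\{w\le k\}}\nabla w\in L^2_{\loc}$ rather than $\nabla w\in L^2_{\loc}$, and the bound on $\nabla\ln(\we+1)$ obtained here is what will later (in \eqref{eq:eps_sea_0:nabla_ln_w_l2}) furnish the corresponding control after passing to the limit.
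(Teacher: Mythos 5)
Your strategy is the same as the paper's: first bound $\nabla(\ue\ve)$ in $L^2(\Omega \times (0, T))$ via the two-dimensional Ladyzhenskaya-type inequality, then test the third equation in \eqref{prob:approx} with a multiple of $\frac{1}{\we+1}$ and absorb the taxis term by Young's inequality. Your treatment of $\nabla(\ue\ve)$ is an acceptable variant: the paper applies \eqref{eq:gni_comb:comb} with $\varphi = \ve$, $\psi = \ue$, while you use \eqref{eq:gni_comb:lady} together with $\ve \in L^4(\Omega \times (0, T))$; as a side remark, interpolating the $L^\infty$-$L^2$ and $L^3$-$L^3$ bounds for $\ve$ only yields $L^4((0,T); \leb{8/3})$, but your Gagliardo--Nirenberg route to $\ve \in L^4(\Omega \times (0, T))$ from $\sup_t \|\ve\|_{\leb2}$ and $\nabla \ve \in L^2$ is correct.

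The flaw is a sign error in your central identity. Since $s \mapsto \ln(1+s)$ is concave, diffusion \emph{increases} $\intom \ln(\we+1)$; the correct computation reads
\begin{align*}
  \ddt \intom \ln(\we+1)
  = d_3 \intom \frac{|\nabla \we|^2}{(\we+1)^2}
    - \chi \intom \frac{\sigmae(\we)}{(\we+1)^2} \nabla(\ue\ve) \cdot \nabla \we
    + \intom \frac{h(\ue,\ve,\we)}{\we+1},
\end{align*}
whereas in your display the first two terms on the right carry the opposite sign. This matters because the terms you then estimate are the wrong ones: integrating the correct identity in time and absorbing the taxis term by Young, the dissipation term is controlled by $\intom \ln(\we(\cdot,t)+1)$, which has to be bounded \emph{from above} (via $\ln(\we+1) \le \we$ and Lemma~\ref{lm:u_v_w_l1}), while $-\intom \ln(\wne+1) \le 0$ may simply be dropped; likewise one needs $-\frac{h(\ue,\ve,\we)}{\we+1} \le \mu_3 \frac{\we^2}{\we+1} \le \mu_3 \we$, bounded in $L^1(\Omega \times (0,T))$ by Lemma~\ref{lm:u_v_w_l1}, rather than the upper bound $\frac{h(\ue,\ve,\we)}{\we+1} \le \lambda_3 + b_2 \ue + b_3 \ve$ you invoke. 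All replacement estimates are available from the lemmas you already cite, so the argument is immediately repairable — and once repaired it coincides with the paper's proof — but as written the identity is false and the concluding bounds are fitted to the false version.
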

\begin{proof}
  By testing the third equation in \eqref{prob:approx} with $-\frac{1}{\we+1}$, we obtain
  \begin{align}\label{eq:nabla_we_l2:test}
          - \ddt \intom \ln(\we+1)
    &=    - d_3 \intom \frac{|\nabla \we|^2}{(\we+1)^2}
          + \chi \intom \frac{\sigmae(\we)}{(\we+1)^2} \nabla \we \cdot \nabla (\ue \ve)
          - \intom \frac{h(\ue, \ve, \we)}{\we+1} \notag \\
    &\le  - \frac{d_3}{2} \intom \frac{|\nabla \we|^2}{(\we+1)^2}
          + \frac{\chi^2}{2d_3} \intom |\nabla (\ue \ve)|^2
          + \mu_3 \intom \frac{\we^2}{\we+1}
  \end{align}
  in $(0, T)$ for all $\eps \in (0, 1)$.
  As \eqref{eq:gni_comb:comb}, Lemma~\ref{lm:ue_bdds} and Lemma~\ref{lm:ve_bdds} provide $c_1, c_2 > 0$ such that
  \begin{align*}
          \intom |\nabla (\ue \ve)|^2
    &\le  2 \intom \ue^2 |\nabla \ve|^2 + 2 \intom \ve^2 |\nabla \ue|^2 \\
    &\le  2 \|\ue\|_{L^\infty(\Omega \times (0, T))}^2 \intom |\nabla \ve|^2
          + \intom |\nabla \ve|^2 + c_1 \left( \intom |\Delta \ue|^2 \intom |\nabla \ue|^2 + 1 \right) \intom \ve^2 \\
    &\le  c_2 \intom |\Delta \ue|^2 + c_2
    \qquad \text{in $(0, T)$ for all $\eps \in (0, 1)$,}
  \end{align*}
  integrating \eqref{eq:nabla_we_l2:test} yields
  \begin{align*}
    &\pe  \intom \ln(\wne+1) + \frac{d_3}{2} \int_0^t \intom \frac{|\nabla \we|^2}{\we+1} + \int_0^t \intom |\nabla (\ue \ve)|^2 \\
    &\le  \intom \ln(\we(\cdot, t)) + c_3 \int_0^t \intom |\Delta \ue|^2 + c_3 t + \mu_3 \int_0^t \intom \we \\
    &\le  \intom \we(\cdot, t) + c_3 \int_0^T \intom |\Delta \ue|^2 + c_3 T + \mu_3 \int_0^T \intom \we
  \end{align*}
  for all $t \in (0, T)$ and $\eps \in (0, 1)$, where $c_3 \defs \frac{\chi^2 c_2}{2d_3}+c_2$.
  The statement then follows by Lemma~\ref{lm:u_v_w_l1}, Lemma~\ref{lm:ue_bdds} and the monotone convergence theorem. 
\end{proof}

In order to prepare applications of the Aubin--Lions lemma,
we next collect estimates for the time derivatives which rapidly follow from the bounds obtained above.
\begin{lem}\label{lm:uet_vet_wet}
  For all $T \in (0, \infty)$, there is $C > 0$ such that   
  \begin{align}\label{eq:uet_vet_wet:bdd_uv}
    \|\uet\|_{L^2(\Omega \times (0, T))} \le C, \quad
    \|\vet\|_{L^{6/5}((0, T); (\sob12)^\star)} \le C
  \end{align}
  and
  \begin{align}\label{eq:uet_vet_wet:bdd_w}
    \|(\ln(\we+1))_t\|_{L^1((0, T); (\sob22)^\star)} \le C
  \end{align}
  for all $\eps \in (0, 1)$.
\end{lem}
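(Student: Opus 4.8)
The plan is to obtain all three bounds directly from the equations in \eqref{prob:approx} by testing against an arbitrary $\varphi$ in the relevant dual space and invoking the a~priori estimates already established in Lemmas~\ref{lm:ue_bdds}, \ref{lm:ve_bdds}, \ref{lm:nabla_ve_65} and \ref{lm:nabla_we_l2}. For $\uet$: since $\uet = d_1 \Delta \ue + f(\ue, \ve, \we)$ pointwise and Lemma~\ref{lm:ue_bdds} already yields $\eps$-independent bounds for $\|\Delta \ue\|_{L^2(\Omega \times (0, T))}$, it remains only to control $\|f(\ue, \ve, \we)\|_{L^2(\Omega \times (0, T))}$; but $f(\ue, \ve, \we) = \ue(\lambda_1 - \mu_1 \ue - a_1 \ve - a_2 \we)$, and with the $L^\infty$ bound on $\ue$ from Lemma~\ref{lm:ue_bdds} together with the space-time $L^2$ bounds on $\ve$ and $\we$ from Lemmas~\ref{lm:ve_bdds} and \ref{lm:u_v_w_l1}, this is immediate.

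For $\vet$: here I would test the second equation against $\varphi \in L^6((0, T); \sob12)$ and integrate by parts in space, writing
\begin{align*}
  \left| \int_0^T \intom \vet \varphi \right|
  &\le d_2 \int_0^T \intom |\nabla \ve| \, |\nabla \varphi|
       + \xi \int_0^T \intom \sigmae(\ve) |\nabla \ue| \, |\nabla \varphi|
       + \int_0^T \intom |g(\ue, \ve, \we)| \, |\varphi|.
\end{align*}
The first two terms are bounded by $\|\nabla \ve\|_{L^2(\Omega \times (0, T))} \|\nabla \varphi\|_{L^2(\Omega \times (0, T))}$ and (using $0 \le \sigmae \le \mathrm{id}$ and the product structure) by $\eps$-independent constants times $\|\nabla \varphi\|_{L^2(\Omega \times (0, T))}$, via Lemmas~\ref{lm:ue_bdds} and \ref{lm:ve_bdds}; alternatively one can simply note that $-\xi \nabla \cdot (\sigmae(\ve) \nabla \ue) + g(\ue, \ve, \we)$ is bounded in $L^{6/5}(\Omega \times (0, T))$ by Lemma~\ref{lm:nabla_ve_65}, so that the entire right-hand side of the $\ve$-equation, as a distribution, acts on $\varphi$ with a bound $C \|\varphi\|_{L^6((0,T); \sob12)}$ since $\sob12 \embed L^6(\Omega)$ in two dimensions; and the diffusion term $d_2 \Delta \ve$ contributes $d_2 \|\nabla \ve\|_{L^2} \|\nabla \varphi\|_{L^2} \le d_2 \|\nabla \ve\|_{L^2} \|\varphi\|_{L^{6/5}((0,T);\sob12)}$ after noting $6/5 \le 2$ on the bounded time interval. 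Taking the supremum over $\varphi$ with $\|\varphi\|_{L^6((0,T);\sob12)} \le 1$ gives \eqref{eq:uet_vet_wet:bdd_uv} for $\vet$.

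For $(\ln(\we+1))_t$: I would use the identity $(\ln(\we+1))_t = \frac{\wet}{\we+1}$ and rewrite it from the third equation as
\begin{align*}
  (\ln(\we+1))_t
  = d_3 \Delta \ln(\we+1) + d_3 \frac{|\nabla \we|^2}{(\we+1)^2}
    - \chi \nabla \cdot \Big(\frac{\sigmae(\we)}{\we+1} \nabla(\ue\ve)\Big)
    - \chi \frac{\sigmae(\we)}{(\we+1)^2} \nabla \we \cdot \nabla(\ue\ve)
    + \frac{h(\ue, \ve, \we)}{\we+1},
\end{align*}
and then test against $\varphi \in L^\infty((0,T); \sob22)$ with $\|\varphi\|_{L^\infty((0,T);\sob22)} \le 1$, using $\sob22 \embed \con0$ in two dimensions. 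After integrating by parts, the first two diffusion/gradient terms pair with $\nabla \varphi$ and $\varphi$ respectively and are controlled by $\|\nabla \ln(\we+1)\|_{L^2(\Omega\times(0,T))}$ — which is exactly the quantity bounded in Lemma~\ref{lm:nabla_we_l2}, together with $\|\nabla \ln(\we+1)\|_{L^2}^2 = \int |\nabla \we|^2/(\we+1)^2$; the taxis term pairs with $\nabla \varphi \in \con0$ and is bounded using $0 \le \sigmae(\we)/(\we+1) \le 1$ and $\|\nabla(\ue\ve)\|_{L^2(\Omega\times(0,T))} \le C$ from Lemma~\ref{lm:nabla_we_l2}; the cross term $\frac{\sigmae(\we)}{(\we+1)^2}\nabla\we\cdot\nabla(\ue\ve)$ is estimated by Cauchy--Schwarz against $\|\nabla\ln(\we+1)\|_{L^2}\|\nabla(\ue\ve)\|_{L^2}$; and the zeroth-order term satisfies $|h(\ue,\ve,\we)|/(\we+1) \le \lambda_3 + b_2 \ue + b_3 \ve$ (after dropping the $-\mu_3\we^2/(\we+1) \le 0$ contribution), which is bounded in $L^1(\Omega\times(0,T))$ by Lemmas~\ref{lm:u_v_w_l1} and \ref{lm:ue_bdds}. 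Summing and taking the supremum over $\varphi$ yields \eqref{eq:uet_vet_wet:bdd_w}. I do not expect a genuine obstacle here; the only mildly delicate point is bookkeeping the low, merely-$L^1$-in-time regularity of the $w$-component, which forces the use of $(\sob22)^\star$ (so that $\varphi$ and $\nabla\varphi$ are controlled in $L^\infty$ in space) rather than a Hilbert dual, and the choice of $\ln(\we+1)$ rather than $\we$ itself so that the gradient term appears in the form already estimated in Lemma~\ref{lm:nabla_we_l2}.
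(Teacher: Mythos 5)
Your proposal is correct and takes essentially the same route as the paper: $\uet$ is bounded in $L^2$ directly from the $L^2$-bounds on $\Delta\ue$ and $f(\ue,\ve,\we)$, the $\vet$-estimate follows by duality from Lemma~\ref{lm:nabla_ve_65}, the $L^2$-bound on $\nabla\ve$ and the embedding $\sob12\embed\leb6$, and the $w$-part is obtained by testing the third equation against $\varphi/(\we+1)$ (your identity for $(\ln(\we+1))_t$ is exactly this computation written out), combined with Lemma~\ref{lm:nabla_we_l2} and Lemma~\ref{lm:u_v_w_l1}. Two harmless inaccuracies: for $\varphi\in\sob22$ in two dimensions one only has $\nabla\varphi\in\sob12$, not $\nabla\varphi\in\con0$, so the taxis term should be (and in effect is) estimated by Cauchy--Schwarz against $\|\nabla\varphi\|_{\leb2}$, as in the paper; and in a dual-norm estimate you may not simply drop the term $\mu_3\we^2/(\we+1)$ from $|h(\ue,\ve,\we)|/(\we+1)$, though keeping it costs nothing since $\mu_3\we^2/(\we+1)\le\mu_3\we$ is bounded in $L^1(\Omega\times(0,T))$ by Lemma~\ref{lm:u_v_w_l1}.
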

\begin{proof}
  As to \eqref{eq:uet_vet_wet:bdd_w}, we test the third equation in \eqref{prob:approx} with $\varphi \in C^\infty(\Ombar)$ to obtain that with some $c_1 > 0$
  \begin{align*}
    &\pe  \left| \intom (\ln(\we+1))_t \varphi \right|
     =    \left| \intom \wet \frac{\varphi}{\we+1} \right| \\
    &\le  \left| \intom (d_1 \nabla \we - \chi \sigmae(\we) \nabla (\ue \ve)) \cdot \nabla \frac{\varphi}{\we+1} \right|
          + \left| \intom \frac{h(\ue, \ve, \we) \varphi}{\we+1} \right| \\
    &\le  d_1 \intom \frac{|\nabla \we|}{\we+1} |\nabla \varphi|
          + d_1 \intom \frac{|\nabla \we|^2}{(\we+1)^2} |\varphi| \\
    &\pe  + \chi \intom \frac{\we}{\we+1} |\nabla(\ue \ve)| |\nabla \varphi|
          + \chi \intom \frac{\we|\nabla \we|}{(\we+1)^2} |\nabla(\ue \ve)|  |\varphi|
          + \intom |h(\ue, \ve, \we)| |\varphi| \\
    &\le  c_1 \left( \intom \frac{|\nabla \we|^2}{(\we+1)^2} + \intom |\nabla(\ue \ve)|^2 + \intom |h(\ue, \ve, \we)| + 1 \right)
          \left( \|\varphi\|_{\leb\infty} + \|\nabla \varphi\|_{\leb2} \right)
  \end{align*}
  in $(0, T)$ for all $\eps \in (0, 1)$.
  Combined with the embedding $\sob22 \embed \leb\infty \cap \sob12$
  this implies that there exists $c_2 > 0$ with
  \begin{align}\label{eq:uet_vet_wet:ln_w_space}
    \|(\ln(\we+1))_t\|_{(\sob22)^\star}
    \le c_2 \left( \intom \frac{|\nabla \we|^2}{(\we+1)^2} + \intom |\nabla(\ue \ve)|^2 + \intom |h(\ue, \ve, \we)| +1 \right)
  \end{align}
  in $(0, T)$ for all $\eps \in (0, 1)$.
  Since Lemma~\ref{lm:nabla_we_l2} and Lemma~\ref{lm:u_v_w_l1} provide $\eps$-independent estimates for the $L^1((0, T))$ norm of the right-hand side of \eqref{eq:uet_vet_wet:ln_w_space}, we arrive at \eqref{eq:uet_vet_wet:bdd_w}.

  Finally, the second half of \eqref{eq:uet_vet_wet:bdd_uv} follows by a similar (but simpler) testing procedure from   Lemma~\ref{lm:ve_bdds}, Lemma~\ref{lm:nabla_ve_65}, 
  and the embedding $\sob12 \embed \leb6$, whereas the first part of \eqref{eq:uet_vet_wet:bdd_uv} is an immediate consequence of Lemma~\ref{lm:ue_bdds} and \eqref{eq:u_v_w_l1:l2l2}.
\end{proof}

\subsection{The limit process \texorpdfstring{$\eps \sea 0$}{epsilon to 0}: Obtaining solution candidates}\label{sec:xi>0:eps_sea_0}
With the a~priori estimates obtained in the previous subsection at hand, we are now able to take the limit of $(\ue, \ve, \we)$ (along some null sequence) in appropriate spaces.
\begin{lem}\label{lm:eps_sea_0}
  There exist nonnegative functions $u, v, w \colon \Omega \times (0, \infty) \to [0, \infty)$ and a null sequence $(\eps_j)_{j \in \N} \subset (0, 1)$ such that
  \begin{alignat}{2}
    \ue &\ra u
    &&\qquad \text{a.e.\ and in $L_{\loc}^2(\Ombarinf)$}, \label{eq:eps_sea_0:u_l2}\\
    \Delta \ue &\rh \Delta u
    &&\qquad \text{in $L_{\loc}^2(\Ombarinf)$}, \label{eq:eps_sea_0:delta_u_l2}\\
    \ve &\ra v
    &&\qquad \text{a.e.\ and in $L_{\loc}^2(\Ombarinf)$}, \label{eq:eps_sea_0:v_l2}\\
    \nabla \ve &\rh \nabla v
    &&\qquad \text{in $L_{\loc}^2(\Ombarinf)$}, \label{eq:eps_sea_0:nabla_v_l2}\\
    \we &\ra w
    &&\qquad \text{a.e.\ and in $L_{\loc}^1(\Ombarinf)$}, \label{eq:eps_sea_0:w_l1}\\
    \we &\rh w
    &&\qquad \text{in $L_{\loc}^2(\Ombarinf)$}, \label{eq:eps_sea_0:w_l2}\\
    \nabla \ln(\we+1) &\rh \nabla \ln(w+1)
    &&\qquad \text{in $L_{\loc}^2(\Ombarinf)$} \label{eq:eps_sea_0:nabla_ln_w_l2}
  \end{alignat}
  as $\eps = \eps_j \sea 0$.
\end{lem}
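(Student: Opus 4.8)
The plan is to read off each convergence property from the $\eps$-independent a~priori estimates of Subsection~\ref{sec:xi>0:apriori}, combining the Aubin--Lions lemma (for the strong and pointwise convergences) with weak sequential compactness in the reflexive space $L^2$ (for the gradient and Laplacian convergences), and finally a diagonal extraction so that a single null sequence $(\eps_j)$ works on every $\Omega \times (0, T)$, $T \in \N$. Throughout, the limit functions on $\Omega \times (0, \infty)$ are obtained by first extracting subsequences on $\Omega \times (0, k)$ for each $k \in \N$ and then patching the (consistent) a.e.\ limits together.

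First I would treat $u$. By Lemma~\ref{lm:ue_bdds}, $(\ue)$ is bounded in $L^2((0, T); \sob12)$, and by the first part of \eqref{eq:uet_vet_wet:bdd_uv} in Lemma~\ref{lm:uet_vet_wet}, $(\uet)$ is bounded in $L^2(\Omega \times (0, T)) \embed L^2((0, T); (\sob12)^\star)$; since $\sob12 \embed\embed \leb2 \embed (\sob12)^\star$, the Aubin--Lions lemma provides a subsequence along which $\ue \ra u$ in $L^2(\Omega \times (0, T))$ and a.e.\ in $\Omega \times (0, T)$. As $(\Delta \ue)$ is bounded in $L^2(\Omega \times (0, T))$ by Lemma~\ref{lm:ue_bdds}, passing to a further subsequence makes $\Delta \ue$ converge weakly in $L^2(\Omega \times (0, T))$; the limit is $\Delta u$ because $\ue \ra u$ in particular in $\mathcal{D}'(\Omega \times (0, T))$, and since this weak limit is uniquely determined, no further thinning is needed. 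An entirely analogous argument, now based on the bounds for $\ve$ and $\nabla \ve$ in Lemma~\ref{lm:ve_bdds} and on the bound for $\vet$ in $L^{6/5}((0, T); (\sob12)^\star)$ from \eqref{eq:uet_vet_wet:bdd_uv}, yields $\ve \ra v$ in $L^2(\Omega \times (0, T))$ and a.e., and $\nabla \ve \rh \nabla v$ in $L^2(\Omega \times (0, T))$.

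The component $w$ requires more care, because Lemma~\ref{lm:nabla_we_l2} controls only $\nabla \ln(\we + 1)$ and not $\nabla \we$. I would therefore apply the Aubin--Lions lemma to $\ln(\we + 1)$: this sequence is bounded in $L^2((0, T); \sob12)$, since $\ln(\we + 1) \le \we$ and $(\we)$ is bounded in $L^2(\Omega \times (0, T))$ by Lemma~\ref{lm:u_v_w_l1} while $\|\nabla \ln(\we + 1)\|_{L^2(\Omega \times (0, T))}$ is bounded by Lemma~\ref{lm:nabla_we_l2}; moreover $((\ln(\we + 1))_t)$ is bounded in $L^1((0, T); (\sob22)^\star)$ by \eqref{eq:uet_vet_wet:bdd_w}. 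Using the version of the Aubin--Lions/Simon compactness theorem that allows an $L^1$-in-time bound on the time derivative (with $\sob12 \embed\embed \leb2 \embed (\sob22)^\star$), we obtain $\ln(\we + 1) \ra \zeta$ in $L^2(\Omega \times (0, T))$ and, along a subsequence, a.e.; as $s \mapsto \ln(s + 1)$ is a homeomorphism of $[0, \infty)$ onto $[0, \infty)$, this forces $\we = \mathrm{e}^{\ln(\we + 1)} - 1 \ra \mathrm{e}^\zeta - 1 =: w$ a.e., with $\zeta = \ln(w + 1)$. The $L^2$-$L^2$ bound from Lemma~\ref{lm:u_v_w_l1} makes $(\we)$ uniformly integrable on $\Omega \times (0, T)$, so Vitali's theorem upgrades the a.e.\ convergence to $\we \ra w$ in $L^1(\Omega \times (0, T))$; the same bound together with the a.e.\ convergence also identifies the weak $L^2(\Omega \times (0, T))$ limit of $(\we)$ as $w$ (test against indicator functions of finite-measure sets and use Vitali once more), which gives \eqref{eq:eps_sea_0:w_l2}. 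Finally, $(\nabla \ln(\we + 1))$ is bounded in $L^2(\Omega \times (0, T))$ and $\ln(\we + 1) \ra \ln(w + 1)$ in $\mathcal{D}'(\Omega \times (0, T))$, so a further weak-compactness argument yields \eqref{eq:eps_sea_0:nabla_ln_w_l2}. Nonnegativity of $u$, $v$ and $w$ is inherited from that of $\ue$, $\ve$, $\we$ through the a.e.\ convergences, and a diagonal subsequence over $T \in \N$ delivers one null sequence $(\eps_j)$ for which \eqref{eq:eps_sea_0:u_l2}--\eqref{eq:eps_sea_0:nabla_ln_w_l2} all hold. The main obstacle is exactly this treatment of $w$: since the only available gradient information is the degenerate bound on $\nabla \ln(\we + 1)$, strong convergence of $\we$ itself cannot be obtained directly from Aubin--Lions and must be routed through $\ln(\we + 1)$, with the equi-integrability furnished by the space-time $L^2$ estimate doing the work of converting a.e.\ convergence into $L^1_{\loc}$ convergence.
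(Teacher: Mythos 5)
Your proposal is correct and follows essentially the same route as the paper: Aubin--Lions/Simon compactness applied to $\ue$, $\ve$ and $\ln(\we+1)$ (the last with the $L^1$-in-time bound on the time derivative from \eqref{eq:uet_vet_wet:bdd_w}), weak $L^2_{\loc}$ compactness for $\Delta\ue$, $\nabla\ve$, $\we$ and $\nabla\ln(\we+1)$ with limits identified via the already established a.e.\ convergence, Vitali's theorem based on the $L^2$--$L^2$ bound of Lemma~\ref{lm:u_v_w_l1}, and a diagonal extraction over $T\in\N$. The only cosmetic difference is that you obtain the strong $L^2$ convergences of $\ue$ and $\ve$ directly from the compactness theorem, whereas the paper first extracts $L^1_{\loc}$ limits and upgrades them via Vitali; both are valid.
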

\begin{proof}
  According to \eqref{eq:u_v_w_l1:l1}, Lemma~\ref{lm:ue_bdds}, Lemma~\ref{lm:ve_bdds}, and Lemma~\ref{lm:nabla_we_l2}, 
  the families $(\ue)_{\eps \in (0, 1)}$, $(\ve)_{\eps \in (0, 1)}$ and $(\ln (\we+1))_{\eps \in (0, 1)}$ are bounded in $L_{\loc}^2([0, \infty); \sob12)$, 
  while by Lemma~\ref{lm:uet_vet_wet} their time derivatives are bounded in $L_{\loc}^2(\Ombar \times [0, \infty))$,  $L_{\loc}^{6/5}([0, \infty); (\sob12)^\star)$ and $L_{\loc}^1([0, \infty); (\sob22)^\star)$, respectively.
  Thus, the Aubin--Lions lemma and a diagonalization argument show that there are $u, v, z \in L_{\loc}^1(\Ombarinf)$ such that $(\uej, \vej, \ln(\wej+1)) \to (u, v, z)$ in $L_{\loc}^1(\Ombarinf)$
  along some null sequence $(\eps_j)_{j \in \N} \subset (0, 1)$.
  Upon switching to a subsequence and setting $w \defs \ure^{z}-1$, we may further assume that $((\uej, \vej, \wej))_{j \in \N}$ converges a.e.\ to $(u, v, w)$.
  Then the nonnegativity of the approximate solutions asserted in Lemma~\ref{lm:local_ex} is transferred to the limit functions,
  and \eqref{eq:eps_sea_0:u_l2}, \eqref{eq:eps_sea_0:v_l2} as well as \eqref{eq:eps_sea_0:w_l1} follow from Lemma~\ref{lm:ue_bdds}, Lemma~\ref{lm:ve_bdds}, \eqref{eq:u_v_w_l1:l2l2} and Vitali's theorem.

  Moreover, $(\Delta \ue)_{\eps \in (0, 1)}$, $(\nabla \ve)_{\eps \in (0, 1)}$, $(\we)_{\eps \in (0, 1)}$ and $(\nabla \ln(\we+1))_{\eps \in (0, 1)}$ are bounded in $L_{\loc}^2(\Ombarinf)$
  by Lemma~\ref{lm:ue_bdds}, Lemma~\ref{lm:ve_bdds}, \eqref{eq:u_v_w_l1:l2l2} and Lemma~\ref{lm:nabla_we_l2},
  so that they converge weakly along some further subsequence in $L_{\loc}^2(\Ombarinf)$.
  As pointwise a.e.\ convergence (of the functions themselves) is already established, we can identify their limits with $\Delta u$, $\nabla v$, $w$ and $\nabla \ln(w+1)$, respectively,
  and conclude \eqref{eq:eps_sea_0:delta_u_l2}, \eqref{eq:eps_sea_0:nabla_v_l2}, \eqref{eq:eps_sea_0:w_l2} and \eqref{eq:eps_sea_0:nabla_ln_w_l2}.
\end{proof}

In order to show that the triple $(u, v, w)$ obtained in Lemma~\ref{lm:eps_sea_0} is actually a generalized solution of \eqref{system} in the sense of Definition~\ref{def:sol_concept},
we in particular need to show that $\phi(u, w)$ is a weak $\phi$-supersolution for certain choices of $\phi$; that is, that \eqref{eq:sol_concept:vw_phi_supersol} holds.
However, the latter contains \emph{quadratic} expressions of (weighted) gradients of all solution components,
meaning that the weak convergence of gradient terms asserted by Lemma~\ref{lm:eps_sea_0} is yet insufficient.

Thus, we are interested in strong convergence of (weighted) gradients of $\ue$ and $\ve$.
(The term in \eqref{eq:sol_concept:vw_phi_supersol} which is quadratic in $\nabla w$ has a favorable sign and can be treated by making use of the weak lower semicontinuity of the norm.)
According to \cite{FuestStrongConvergenceWeighted2023}, this follows if the respective equations can be written as a heat equation with a force term converging in sufficiently strong topologies.
Fortunately, the latter is contained in the above analysis and thus we obtain
\begin{lem}\label{lm:strong_grad_conv}
  Let $(u, v, w)$ and $(\eps_j)_{j \in \N} \subset (0, 1)$ be as given by Lemma~\ref{lm:eps_sea_0}.
  Then there exists a subsequence of $(\eps_j)_{j \in \N}$, which we do not relabel, such that
  \begin{alignat}{2}
    \nabla \ue &\ra \nabla u
    &&\qquad \text{in $L_{\loc}^2(\Ombarinf)$}, \label{eq:strong_grad_conv:u}\\
    \mathds 1_{\{\ve \le k\}} \nabla \ve &\ra \mathds 1_{\{v \le k\}} \nabla v
    &&\qquad \text{in $L_{\loc}^2(\Ombarinf)$ for all $k \in \N$} \label{eq:strong_grad_conv:v}
  \end{alignat}
  as $\eps = \eps_j \sea 0$.
\end{lem}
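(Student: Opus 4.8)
The plan is to invoke the abstract strong-convergence result of \cite{FuestStrongConvergenceWeighted2023} for each of the two equations separately. For the first component, recall from Lemma~\ref{lm:ue_bdds} that $\uet = d_1 \Delta \ue + f(\ue, \ve, \we)$ with $\ue$ bounded in $L^\infty$ and $\ve, \we$ bounded in $L^2_{\loc}$; hence $f(\ue, \ve, \we) = \ue(\lambda_1 - \mu_1 \ue - a_1 \ve - a_2 \we)$ is bounded in $L^2_{\loc}(\Ombarinf)$, and thanks to the pointwise convergences \eqref{eq:eps_sea_0:u_l2}, \eqref{eq:eps_sea_0:v_l2}, \eqref{eq:eps_sea_0:w_l1} together with uniform integrability (Lemma~\ref{lm:u_v_w_l1}, Lemma~\ref{lm:ue_bdds}, Lemma~\ref{lm:ve_bdds}) and Vitali's theorem, $f(\ue, \ve, \we) \to f(u, v, w)$ at least weakly in $L^1_{\loc}$. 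Since $\une \to u_0$ in $\sob12$, the hypotheses of the relevant lemma in \cite{FuestStrongConvergenceWeighted2023} (force term converging weakly in space-time $L^1$, convergence of the initial data, the already-established weak convergence $\na \ue \wto \na u$ from \eqref{eq:eps_sea_0:delta_u_l2} via Poincaré, or directly from an $L^2_{\loc}$ bound on $\na\ue$) are met, and we conclude \eqref{eq:strong_grad_conv:u} after passing to a further subsequence.

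For the second component, the equation is $\vet = d_2 \Delta \ve - \xi \na\cdot(\sigmae(\ve)\na\ue) + g(\ue,\ve,\we)$. Here the relevant input is that the full right-hand side $-\xi\na\cdot(\sigmae(\ve)\na\ue) + g(\ue,\ve,\we)$ is bounded in $L^{6/5}_{\loc}(\Ombarinf)$ by Lemma~\ref{lm:nabla_ve_65}, hence (after extracting a subsequence) converges weakly in $L^{6/5}_{\loc}$, in particular weakly in $L^1_{\loc}$; the limit can be identified using the pointwise convergences from Lemma~\ref{lm:eps_sea_0} (for the zeroth-order part) and, for the divergence part, the weak $L^2_{\loc}$ convergence $\na\ue \wto \na u$, $\na\ve \wto \na v$ together with $\sigmae(\ve) \to v$ a.e.\ and boundedly-in-$L^2$ (so that $\sigmae(\ve)\na\ue \wto v\na u$ and $\sigmae(\ve)\na\ve \to$ the appropriate limit — strong convergence of $\na\ue$ would even help here). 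Since $\vne \to v_0$ in $\leb2$ and $\na\ve \wto \na v$ in $L^2_{\loc}$, the cutoff version of the strong-convergence theorem in \cite{FuestStrongConvergenceWeighted2023} applies and yields \eqref{eq:strong_grad_conv:v}, namely $\mathds 1_{\{\ve \le k\}}\na\ve \to \mathds 1_{\{v\le k\}}\na v$ in $L^2_{\loc}$ for every $k \in \N$ (using a diagonal argument over $k$ to keep a single subsequence).

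I would expect the main obstacle to be the careful verification that the right-hand sides converge in the \emph{exact} topology demanded by the cited theorem — in particular checking that weak $L^1$ (rather than weak $L^p$ for some $p>1$) convergence of the force term is what \cite{FuestStrongConvergenceWeighted2023} requires, and supplying the additional structural hypotheses of that result (e.g.\ that the diffusion coefficient is constant, the Neumann boundary condition, and possibly a mild a-priori bound such as $\ve$ or $\ln(\we+1)$ being bounded in $L^2_{\loc}(\sob12)$, all of which are available from Lemmas~\ref{lm:ue_bdds}--\ref{lm:nabla_we_l2}). A secondary technical point is the bookkeeping of subsequences: each application extracts a further subsequence, and for \eqref{eq:strong_grad_conv:v} one must moreover diagonalize over $k \in \N$; but since all of this is along a fixed null sequence this is routine. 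Everything else — the identification of weak limits of products like $\sigmae(\ve)\na\ue$, and the uniform-integrability arguments for $f$ and $g$ — follows directly from the convergences already collected in Lemma~\ref{lm:eps_sea_0} via Vitali's theorem.
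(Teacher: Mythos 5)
Your proposal takes essentially the same route as the paper: both convergences are obtained from the strong-convergence results of \cite{FuestStrongConvergenceWeighted2023}, fed with the $L^1_{\loc}$ convergence of the reaction term (resp.\ the $L^{6/5}$ bound of Lemma~\ref{lm:nabla_ve_65} for the full right-hand side of the $v$-equation), the initial-data convergence \eqref{eq:conv_init}, and an identification of the resulting limit with $(u,v)$ via the a.e.\ convergence from Lemma~\ref{lm:eps_sea_0}; you also note the same elementary alternative for \eqref{eq:strong_grad_conv:u} based on \eqref{eq:eps_sea_0:u_l2} and \eqref{eq:eps_sea_0:delta_u_l2}. The only inessential difference is that you spend effort identifying the weak limit of $-\xi\nabla\cdot(\sigmae(\ve)\nabla\ue)+g(\ue,\ve,\we)$, whereas the paper exploits that the cited theorem only requires weak convergence to \emph{some} limit $z$, so this identification can simply be omitted.
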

\begin{proof}
  As Lemma~\ref{lm:ue_bdds} shows that $(\ue)_{\eps \in (0, 1)}$ is bounded in $L_{\loc}^\infty(\Ombarinf)$,
  \eqref{eq:eps_sea_0:u_l2}, \eqref{eq:eps_sea_0:v_l2}, \eqref{eq:eps_sea_0:w_l1} and a variant of Lebesgue's theorem assert
  \begin{align*}
    \ue f(\ue, \ve, \we) = \ue^2 \cdot (\lambda_1 - \mu_1 \ue - a_1 \ve - a_2 \we) \to u f(u, v, w)
  \end{align*}
  in $L_{\loc}^1(\Ombarinf)$ as $\eps = \eps_j \sea 0$.
  As also $\une \to u_0$ in $\leb2$ as $\eps \sea 0$ by \eqref{eq:conv_init}, \eqref{eq:strong_grad_conv:u}
  follows upon an application of \cite[(Theorem~1.1 and) Theorem~1.3]{FuestStrongConvergenceWeighted2023} (with the choice $\psi(s)=\frac{s^2}{2}$ there, and possibly after switching to a subsequence).
(Alternatively, the proof of \eqref{eq:strong_grad_conv:u} can be based on \eqref{eq:eps_sea_0:u_l2} and \eqref{eq:eps_sea_0:delta_u_l2}.)
  
  By Lemma~\ref{lm:nabla_ve_65}, 
  there exist $z \in L_{\loc}^{6/5}(\Ombarinf)$ and a subsequence of $(\eps_j)_{j \in \N}$, not relabeled, such that
  \begin{align*}
     -\xi \nabla \cdot (\sigmae(\ve) \nabla \ue) + g(\ue, \ve, \we) \rh z
    \qquad \text{in $L_{\loc}^{6/5}(\Ombarinf)$ as $\eps = \eps_j \sea 0$.}
  \end{align*}
  In combination with \eqref{eq:conv_init}, this renders \cite[Theorem~1.1]{FuestStrongConvergenceWeighted2023} applicable,
  which allows us to extract a final subsequence of $(\eps_j)_{j \in \N}$ such that
  \begin{align*}
    \ve \ra \tilde v
    \quad \text{a.e.}
    \qquad \text{and} \qquad
    \mathds 1_{\{\ve \le k\}} \nabla \ve &\ra \mathds 1_{\{\tilde v \le k\}} \nabla \tilde v
    \quad \text{in $L_{\loc}^2(\Ombarinf)$ for all $k \in \N$} 
  \end{align*}
  as $\eps = \eps_j \sea 0$ for some function $\tilde v \colon \Ombarinf \to \R$.
  Since also $\ve \ra v$ a.e.\ as $\eps = \eps_j \sea 0$ by \eqref{eq:eps_sea_0:v_l2},
  we conclude $\tilde v = v$ a.e.\ and thus \eqref{eq:strong_grad_conv:v}.
\end{proof}

The convergence of $∇\ln(\we+1)$ in \eqref{eq:eps_sea_0:nabla_ln_w_l2} seems much too weak for a treatment of $∇\we$. 
If, however, the latter term is combined with some cutoff, these two a priori different modes of convergence become indistinguishable.
We make this observation precise in the following form allowing for rather direct application to the equations at hand. 
\begin{lem}\label{lm:boundedconvergence}
  Let $(u, v, w)$ and $(\eps_j)_{j \in \N}$ be as given by Lemma~\ref{lm:eps_sea_0} and Lemma~\ref{lm:strong_grad_conv}, respectively.
  Moreover, let $T>0$, $ζ\in C^0_c([0,∞)^2)$ and $φ\in C^\infty(\Ombar \times [0, T])$. Then
  \begin{alignat}{2}
   ζ(\ve,\we)∇\we\sqrt{|φ|} &\wto ζ(v,w)∇w\sqrt{|φ|} &&\qquad \text{in } L^2(\Om\times(0,T))\label{conv:naw}\\
   \we ζ(\ve,\we)∇(\ue\ve)\cdot ∇\we &\wto wζ(v,w)∇(uv)\cdot ∇w &&\qquad \text{in } L^1(\Om\times(0,T))\label{conv:alarmterm}
  \end{alignat}
  as $\eps = \eps_j \sea 0$. 
\end{lem}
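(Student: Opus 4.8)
The plan is to exploit the fact that $\zeta$ has compact support to turn both statements into assertions about $\nabla\we$ only on a region where $\we$ stays bounded, and there identify it with $\nabla\ln(\we+1)$ up to the bounded factor $\we+1$. Fix $k\in\N$ with $\supp\zeta\subset[0,k]^2$. On the set $\{\we\le k\}$ we have the pointwise identity $\mathds 1_{\{\we\le k\}}\nabla\we=\mathds 1_{\{\we\le k\}}(\we+1)\nabla\ln(\we+1)$, and since $\zeta(\ve,\we)$ vanishes whenever $\we>k$, we may write $\zeta(\ve,\we)\nabla\we=\zeta(\ve,\we)(\we+1)\nabla\ln(\we+1)$ as an equality in $L^1(\Om\times(0,T))$ (both sides understood via the cutoff). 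The factor $g_\eps\defs\zeta(\ve,\we)(\we+1)\sqrt{|\varphi|}$ is bounded in $L^\infty(\Om\times(0,T))$ uniformly in $\eps$ (by $ (k+1)\|\zeta\|_{C^0}\|\varphi\|_{C^0}^{1/2}$) and, using the a.e.\ convergences \eqref{eq:eps_sea_0:v_l2}, \eqref{eq:eps_sea_0:w_l1}, continuity of $\zeta$ and dominated convergence, it converges \emph{strongly} in $L^2(\Om\times(0,T))$ (indeed in every $L^p$, $p<\infty$) to $g\defs\zeta(v,w)(w+1)\sqrt{|\varphi|}$.

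For \eqref{conv:naw}: write $\zeta(\ve,\we)\nabla\we\sqrt{|\varphi|}=g_\eps\cdot\nabla\ln(\we+1)$. Since $g_\eps\to g$ strongly in $L^2$ while $\nabla\ln(\we+1)\wto\nabla\ln(w+1)$ weakly in $L^2_{\loc}$ by \eqref{eq:eps_sea_0:nabla_ln_w_l2} (and $\nabla\ln(\we+1)$ is moreover bounded in $L^2$, hence the product is bounded in $L^1$), the product of a strongly and a weakly convergent $L^2$ sequence converges weakly in $L^1$ to $g\cdot\nabla\ln(w+1)=\zeta(v,w)(w+1)\sqrt{|\varphi|}\,\nabla\ln(w+1)=\zeta(v,w)\nabla w\sqrt{|\varphi|}$ — the last step again using that $\zeta(v,w)$ is supported in $\{w\le k\}$ where $(w+1)\nabla\ln(w+1)=\nabla w$. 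This is exactly \eqref{conv:naw}. (One should check that $\nabla w$, with the implicit cutoff, is well-defined as the relevant $L^2_{\loc}$ limit; this is clear since $\zeta(v,w)\nabla w\defs\zeta(v,w)(w+1)\nabla\ln(w+1)$ makes sense with $\nabla\ln(w+1)\in L^2_{\loc}$.)

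For \eqref{conv:alarmterm}: factor the integrand as $\big(\we\,\zeta(\ve,\we)(\we+1)\sqrt{|\varphi|^{-1}\vee 1}\cdot\nabla(\ue\ve)\big)\cdot\big(\sqrt{|\varphi|\wedge 1}\,\nabla\ln(\we+1)\big)$ — more cleanly, just write it as $h_\eps\cdot\nabla\ln(\we+1)$ with $h_\eps\defs\we\,\zeta(\ve,\we)(\we+1)\,\nabla(\ue\ve)$, a vector field in $L^2(\Om\times(0,T))$. The first factor $h_\eps$ converges \emph{strongly} in $L^2(\Om\times(0,T))$: indeed $\we\,\zeta(\ve,\we)(\we+1)$ is bounded in $L^\infty$ and converges a.e.\ (hence strongly in every $L^p$, $p<\infty$, by dominated convergence) to $w\,\zeta(v,w)(w+1)$, while $\nabla(\ue\ve)\to\nabla(uv)$ strongly in $L^2_{\loc}$ — this last point follows from $\nabla(\ue\ve)=\ve\nabla\ue+\ue\nabla\ve$ together with the strong convergences $\nabla\ue\to\nabla u$ in $L^2_{\loc}$ (Lemma~\ref{lm:strong_grad_conv}), $\ve\to v$ in $L^2_{\loc}$ (Lemma~\ref{lm:eps_sea_0}), the $L^\infty$-bound on $\ue$ (Lemma~\ref{lm:ue_bdds}), and $\mathds 1_{\{\ve\le m\}}\nabla\ve\to\mathds 1_{\{v\le m\}}\nabla v$ in $L^2_{\loc}$ (Lemma~\ref{lm:strong_grad_conv}); note that on $\supp\zeta$ we only ever see $\ve\le k$, so the cutoff in \eqref{eq:strong_grad_conv:v} with $m=k$ suffices, and the product of the bounded, a.e.-convergent scalar with the $L^2$-strongly convergent gradient is again $L^2$-strongly convergent. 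Then, since $h_\eps\to h\defs w\,\zeta(v,w)(w+1)\nabla(uv)$ strongly in $L^2$ and $\nabla\ln(\we+1)\wto\nabla\ln(w+1)$ weakly in $L^2_{\loc}$, the product converges weakly in $L^1(\Om\times(0,T))$ to $h\cdot\nabla\ln(w+1)=w\,\zeta(v,w)\nabla(uv)\cdot\nabla w$, which is \eqref{conv:alarmterm}.

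The routine work — verifying uniform $L^\infty$ bounds, invoking dominated convergence for the bounded composite factors, and the elementary fact that (strong $L^2$)$\times$(weak $L^2$) $\to$ weak $L^1$ — is all straightforward. \textbf{The main obstacle} is purely bookkeeping: one must be consistent about what $\nabla\we$ and $\nabla w$ mean here. For the approximate solutions $\nabla\we$ is a genuine classical gradient, but for the limit $w$ only $\mathds 1_{\{w\le k\}}\nabla w\in L^2_{\loc}$ is available (Definition~\ref{def:sol_concept}); since $\zeta$ is compactly supported this is exactly enough, and the whole argument must be phrased so that every occurrence of $\nabla w$ is silently accompanied by the cutoff that makes it legitimate. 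Getting this reduction clean — i.e.\ replacing $\zeta(\ve,\we)\nabla\we$ by $\zeta(\ve,\we)(\we+1)\nabla\ln(\we+1)$ at the very first step and never touching an uncut $\nabla\we$ again — is what makes the proof go through.
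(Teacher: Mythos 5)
Your argument is correct and follows essentially the same route as the paper's proof: factor out the bounded, a.e.\ convergent weights built from $(\we+1)\zeta(\ve,\we)$ (times $\sqrt{|\varphi|}$, respectively times $\ve$ or $\ue$ after splitting $\nabla(\ue\ve)=\ve\nabla\ue+\ue\nabla\ve$), and pair them with the weak $L^2$ convergence of $\nabla\ln(\we+1)$ from \eqref{eq:eps_sea_0:nabla_ln_w_l2} and the strong convergences of Lemma~\ref{lm:strong_grad_conv}, using the compact support of $\zeta$ to absorb the cutoffs. The only point to tidy up is that \eqref{conv:naw} asserts weak convergence in $L^2$, not merely $L^1$; since your factor $g_\eps$ is uniformly bounded and converges in every $L^p$ with $p<\infty$, testing against $L^2$ functions yields this immediately with the argument you already give.
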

\begin{proof}
  We first note that the families $((\we+1)ζ(\ve,\we)\sqrt{|φ|})_{\eps \in (0, 1)}$, $(\ve\we(\we+1)ζ(\ve,\we))_{\eps \in (0, 1)}$ and $(\ue\we(\we+1)ζ(\ve,\we))_{\eps \in (0, 1)}$ are bounded due to compactness of the support of $ζ$ and Lemma~\ref{lm:ue_bdds} and, as $\eps = \eps_j \sea 0$, converge pointwise to their counterparts without $\eps$ as shown by \eqref{eq:eps_sea_0:delta_u_l2}, \eqref{eq:eps_sea_0:v_l2} and \eqref{eq:eps_sea_0:w_l1}.
  Combined with the identity $\frac{\nabla \we}{\we + 1}=\nabla \ln(\we + 1)$ and \eqref{eq:eps_sea_0:nabla_ln_w_l2}, this first shows \eqref{conv:naw}.
  From Lemma~\ref{lm:strong_grad_conv} we moreover have that $\nabla \ue \to \nabla u$ and $\mathds 1_{\{\ve \le k\}}\nabla \ve \to \mathds 1_{\{v \le k\}}\nabla v$ in $L^2(\Om\times(0,T))$ as $\eps = \eps_j \sea 0$ for all $k \in \N$.
  Choosing $k \in \N$ so large that $\supp ζ\subset [0,k)^2$ and again relying on \eqref{eq:eps_sea_0:nabla_ln_w_l2}, we conclude \eqref{conv:alarmterm}.
\end{proof}

\subsection{Proof of Theorem~\ref{th:xi>0}}\label{sec:xi>0:proof_thm}
At last, we show that Lemmata~\ref{lm:eps_sea_0}--\ref{lm:boundedconvergence} allow us to transfer sufficiently many solution properties
from $(\ue, \ve, \we)$ to the limit $(u, v, w)$ obtained in Lemma~\ref{lm:eps_sea_0};
that is, we show that the later is a global generalized solution of \eqref{system}.

\begin{lem}\label{lm:uvw_gen_sol}
  The triple $(u, v, w)$ constructed in Lemma~\ref{lm:eps_sea_0} is a global generalized solution of \eqref{system} in the sense of Definition~\ref{def:sol_concept}.
\end{lem}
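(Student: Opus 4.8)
The plan is to verify the three bullet points in Definition~\ref{def:sol_concept} separately, in each case starting from the corresponding (exact) identity for the approximate solutions $(\ue,\ve,\we)$ of \eqref{prob:approx} and passing to the limit $\eps=\eps_j\sea 0$ along the sequence furnished by Lemmata~\ref{lm:eps_sea_0}--\ref{lm:boundedconvergence}. First I would record that, by Lemma~\ref{lm:ue_bdds}, Lemma~\ref{lm:ve_bdds}, \eqref{eq:eps_sea_0:u_l2}, \eqref{eq:eps_sea_0:v_l2} and \eqref{eq:eps_sea_0:w_l1} together with a Vitali/dominated-convergence argument, all the \emph{zeroth-order} nonlinearities converge in $L^1_{\loc}(\Ombarinf)$: $f(\ue,\ve,\we)\to f(u,v,w)$, $g(\ue,\ve,\we)\to g(u,v,w)$, $h(\ue,\ve,\we)\to h(u,v,w)$ (the first two because $\ue,\ve$ are controlled in $L^\infty$ resp.\ $L^3$, the third after multiplying by a bounded cutoff of $\we$, or directly since $\we\to w$ in $L^1_{\loc}$ and $\we$ is $L^2_{\loc}$-bounded), and likewise $\sigmae(\ve)\to v$ a.e.\ with the domination $\sigmae(\ve)\le\ve$.

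For the first bullet, \eqref{eq:sol_concept:u_weak} and \eqref{eq:sol_concept:v_weak}: testing the first two equations of \eqref{prob:approx} against $\varphi\in C_c^\infty(\Ombarinf)$ gives the corresponding weak formulations for $(\ue,\ve,\we)$. In the $u$-equation every term passes to the limit by \eqref{eq:eps_sea_0:u_l2}, \eqref{eq:strong_grad_conv:u} (even weak convergence of $\na\ue$ suffices here) and the $L^1_{\loc}$ convergence of $f$. In the $v$-equation the only delicate term is $\xi\intninfom\sigmae(\ve)\na\ue\cdot\na\varphi$; here I would write $\sigmae(\ve)\na\ue = \sigmae(\ve)\na\ue$ and combine the a.e.\ convergence (with domination) $\sigmae(\ve)\to v$ with the \emph{strong} $L^2_{\loc}$ convergence $\na\ue\to\na u$ from \eqref{eq:strong_grad_conv:u} to obtain $\sigmae(\ve)\na\ue\to v\na u$ in, say, $L^1_{\loc}$, which against the smooth compactly supported $\na\varphi$ is enough; the remaining terms use \eqref{eq:eps_sea_0:v_l2}, \eqref{eq:eps_sea_0:nabla_v_l2} and the $L^1_{\loc}$ convergence of $g$.

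For the third bullet, \eqref{eq:sol_def:mass_ineq}: testing the $\we$-equation with the spatial constant $1$ gives the \emph{mass identity} $\intom\we(\cdot,t)=\intom\wne+\int_0^t\intom h(\ue,\ve,\we)$ for a.e.\ $t$; passing to the limit using $\wnej\to w_0$ in $L^1$, Fatou's lemma on the left (lower semicontinuity of the $L^1$ norm under a.e.\ convergence, valid for a.e.\ $t$ after extracting a further subsequence via Egorov/Fubini) and the $L^1_{\loc}$ convergence of $h$ on the right yields the inequality \eqref{eq:sol_def:mass_ineq} with a suitable null set $N$.

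The main obstacle is the second bullet, the weak $\phi$-supersolution property \eqref{eq:sol_concept:vw_phi_supersol}. Here I would proceed as follows. For fixed admissible $\phi$ (with $D\phi$ compactly supported and $\phi_{ww}\le 0$), test the second and third equations of \eqref{prob:approx} against $\phi_v(\ve,\we)\varphi$ resp.\ $\phi_w(\ve,\we)\varphi$ with $0\le\varphi\in C_c^\infty(\Ombarinf)$, and add; using the chain rule this produces the \emph{exact} identity in which $\phi(\ve,\we)$ plays the role of $\phi(v,w)$, with all the second-derivative terms $\phi_{vv},\phi_{vw},\phi_{ww}$ appearing exactly as on the right of \eqref{eq:sol_concept:vw_phi_supersol} but with $\eps$-subscripts and, crucially, with $\sigmae(\ve),\sigmae(\we)$ in place of $v,w$. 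There are four groups of terms to pass to the limit. (i) The time-derivative and zeroth-order terms are routine via the a.e.\ boundedness of $\phi$-derivatives on the compact support of $D\phi$, the a.e.\ convergences \eqref{eq:eps_sea_0:v_l2}, \eqref{eq:eps_sea_0:w_l1} and the $L^1_{\loc}$-convergence of $g,h$. (ii) The terms linear in $\na v$ or $\na w$ with coefficient $d_2$ or $d_3$ and a $\phi_{vv}$ or $\phi_{vw}$ (but not $\phi_{ww}$) factor are handled by combining the weak convergence \eqref{eq:eps_sea_0:nabla_v_l2} of $\na\ve$, resp.\ \eqref{conv:naw} of $\zeta(\ve,\we)\na\we\sqrt{|\varphi|}$ (with $\zeta$ built from the compactly supported $\phi$-derivatives), with the strong convergence of the remaining (bounded) factor; for the $\na\ve$-quadratic term $d_2\phi_{vv}|\na\ve|^2\varphi$ one uses instead the \emph{strong} $L^2_{\loc}$ convergence $\mathds 1_{\{\ve\le k\}}\na\ve\to\mathds 1_{\{v\le k\}}\na v$ from \eqref{eq:strong_grad_conv:v} (valid since $\phi_{vv}$ is supported in $\{v\le k\}$ for suitable $k$). (iii) The terms containing the taxis fluxes, namely $\xi\sigmae(\ve)\na\ue$ contracted with $\phi_{vv}\na\ve$, $\phi_{vw}\na\we$, $\phi_v\na\varphi$, and $\chi\sigmae(\we)\na(\ue\ve)$ contracted with $\phi_{vw}\na\ve$, $\phi_{ww}\na\we$, $\phi_w\na\varphi$: the $\xi$-terms use the strong $L^2_{\loc}$ convergence of $\na\ue$ (and of $\mathds 1_{\{\ve\le k\}}\na\ve$) together with $\sigmae(\ve)\to v$ a.e.\ with domination; the $\chi$-terms use $\na(\ue\ve)=\ve\na\ue+\ue\na\ve\to v\na u+u\na v=\na(uv)$ strongly in $L^2_{\loc}$ (from \eqref{eq:strong_grad_conv:u}, \eqref{eq:strong_grad_conv:v} and the $L^\infty_{\loc}$ bound on $\ue$), and for the $\phi_{ww}\na\we$ contraction one invokes precisely \eqref{conv:alarmterm} (identifying $\we\zeta(\ve,\we)\na(\ue\ve)\cdot\na\we\to w\zeta(v,w)\na(uv)\cdot\na w$ in $L^1_{\loc}$). (iv) The one term that genuinely needs a \emph{sign}: $-d_3\phi_{ww}(\ve,\we)|\na\we|^2\varphi$, for which weak convergence of $\na\we$ in $L^2_{\loc}$ is \emph{not} available (only of $\na\ln(\we+1)$). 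Here I would rewrite $|\na\we|^2=(\we+1)^2|\na\ln(\we+1)|^2$ and, since $-\phi_{ww}\ge 0$ and $\varphi\ge 0$, note that the corresponding term on the right-hand side of \eqref{eq:sol_concept:vw_phi_supersol} is of the form $\intninfom\big(-d_3\phi_{ww}(v,w)\varphi\big)(w+1)^2|\na\ln(w+1)|^2$ and, because $\phi_{ww}$ has compact support in the $w$-variable, $-d_3(\we+1)^2\phi_{ww}(\ve,\we)\varphi\to-d_3(w+1)^2\phi_{ww}(v,w)\varphi$ pointwise and boundedly; combined with the weak $L^2_{\loc}$ convergence $\sqrt{-d_3(\we+1)^2\phi_{ww}(\ve,\we)\varphi}\,\na\ln(\we+1)\wto\sqrt{-d_3(w+1)^2\phi_{ww}(v,w)\varphi}\,\na\ln(w+1)$ (which follows exactly as \eqref{conv:naw} with $\zeta=\sqrt{-d_3\phi_{ww}}$ absorbed), the weak lower semicontinuity of the $L^2$-norm gives
\begin{align*}
  \liminf_{\eps=\eps_j\sea 0}\Big(-\intninfom d_3\phi_{ww}(\ve,\we)|\na\we|^2\varphi\Big)
  \ge -\intninfom d_3\phi_{ww}(v,w)|\na w|^2\varphi,
\end{align*}
where on the right $|\na w|^2$ is interpreted on $\{w\le k\}\supset\supp\phi_{ww}$ via $\mathds 1_{\{w\le k\}}\na w\in L^2_{\loc}$. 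Since this term enters \eqref{eq:sol_concept:vw_phi_supersol} on the side of the inequality with the favourable sign, the inequality (rather than equality) is preserved in the limit; all other terms being limits of equalities, adding everything up yields \eqref{eq:sol_concept:vw_phi_supersol}, and collecting the three bullet points completes the proof.
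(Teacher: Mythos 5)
Your strategy coincides with the paper's: pass to the limit in the weak formulations for $\ue,\ve$, derive the $\eps$-level identity by testing with $\phi_v(\ve,\we)\varphi$ and $\phi_w(\ve,\we)\varphi$, use the strong convergences \eqref{eq:strong_grad_conv:u}, \eqref{eq:strong_grad_conv:v} together with \eqref{conv:naw}, \eqref{conv:alarmterm}, and treat the sign-definite term $-d_3\phi_{ww}(\ve,\we)|\nabla\we|^2\varphi$ by weak lower semicontinuity after rewriting with $\nabla\ln(\we+1)$ --- all of this is exactly the paper's argument and is sound. One imprecision in your step (iii): the claim $\nabla(\ue\ve)\to\nabla(uv)$ strongly in $L^2_{\loc}(\Ombarinf)$ does not follow from the cited facts, since only $\mathds 1_{\{\ve\le k\}}\nabla\ve$ converges strongly and $\ve$ is not uniformly bounded; what converges strongly are the cut-off quantities $\zeta(\ve,\we)\nabla(\ue\ve)$ with $\zeta$ compactly supported, which is what you actually use (the cutoff being supplied by $D\phi$), so this is harmless in context.

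The genuine gap is in the third bullet point. You assert $h(\ue,\ve,\we)\to h(u,v,w)$ in $L^1_{\loc}(\Ombarinf)$ ``directly since $\we\to w$ in $L^1_{\loc}$ and $\we$ is $L^2_{\loc}$-bounded'' and then pass to the limit in $\intnstom h(\uej,\vej,\wej)$. This inference is false: $L^2_{\loc}$-boundedness of $(\we)$ gives boundedness, but not uniform integrability, of $\we^2$, so the quadratic part $-\mu_3\we^2$ of $h$ need not converge in $L^1_{\loc}$; only $\we\rh w$ weakly in $L^2_{\loc}$ is available, cf.\ \eqref{eq:eps_sea_0:w_l2} (a concentrating sequence such as $j\mathds 1_{A_j}$ with $|A_j|=j^{-2}$ shows that strong $L^1$ convergence of the squares may fail). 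Inside the supersolution inequality this is immaterial because $\phi_w$ provides a compactly supported cutoff --- your alternative justification there is correct --- but in \eqref{eq:sol_def:mass_ineq} there is no cutoff. The repair is the sign observation you already exploit in step (iv): since $-\mu_3\we^2$ enters $h$ with a favourable sign, weak lower semicontinuity and \eqref{eq:eps_sea_0:w_l2} give $\liminf_{j\to\infty}\intnstom \wej^2\ge\intnstom w^2$, while $\wej(\lambda_3+b_2\uej+b_3\vej)\to w(\lambda_3+b_2u+b_3v)$ in $L^1(\Omega\times(0,t))$ by Vitali's theorem; hence only $\limsup_{j\to\infty}\intnstom h(\uej,\vej,\wej)\le\intnstom h(u,v,w)$ holds, which, combined with your Fatou argument for $\intom w(\cdot,t)$ (or the a.e.-$t$ convergence of $\intom\wej(\cdot,t)$ used in the paper) and $\wnej\to w_0$ in $\leb1$, still yields \eqref{eq:sol_def:mass_ineq}. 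With this one-sided treatment of the $\we^2$-term in place of the claimed $L^1_{\loc}$ convergence of $h$, your proof is complete and matches the paper's.
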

\begin{proof}
  We let $(\eps_j)_{j \in \N}$ be as given by Lemma~\ref{lm:strong_grad_conv}. 
  The required regularity and nonnegativity of $(u, v, w)$ already follow from Lemma~\ref{lm:eps_sea_0} 
  and the estimate $\mathds 1_{\{w \le k\}} |\nabla w| = (w+1) \mathds 1_{\{w \le k\}} |\nabla \ln(w+1)| \le (k+1) |\nabla \ln(w+1)|$, valid for all $k \in \N$.
  Moreover, \eqref{eq:sol_concept:u_weak} and \eqref{eq:sol_concept:v_weak} rapidly result
  from the fact that $\ue$ and $\ve$ are weak solutions of the respective subproblems in \eqref{prob:approx} for all $\eps \in (0, 1)$
  combined with the convergence properties asserted in \eqref{eq:conv_init}, Lemma~\ref{lm:eps_sea_0} and Lemma~\ref{lm:strong_grad_conv}.
  For instance, \eqref{eq:eps_sea_0:v_l2} and \eqref{eq:strong_grad_conv:u} imply $\ve \nabla \ue \to v \nabla u$ in $L_{\loc}^1(\Ombarinf)$ as $\eps = \eps_j \sea 0$,
  while $g(\ue, \ve, \we) \rh g(u, v, w)$ in $L_{\loc}^1(\Ombarinf)$ as $\eps = \eps_j \sea 0$ follows from \eqref{eq:eps_sea_0:u_l2}, \eqref{eq:eps_sea_0:v_l2} and \eqref{eq:eps_sea_0:w_l2}.

  As to \eqref{eq:sol_concept:vw_phi_supersol},
  we fix $\phi \in C^\infty([0, \infty)^2)$ with $D \phi \in C_c^\infty([0, \infty)^2)$ and $\phi_{ww} \le 0$ in $[0, \infty)^2$ as well as a nonnegative $\varphi \in C_c^\infty(\Ombarinf)$
  and choose $T > 0$ such that $\supp \varphi \subset \Ombar \times [0, T]$.
  Testing the second and third equation in \eqref{prob:approx} with $\phi_v(\ve, \we) \varphi$ and $\phi_w(\ve, \we) \varphi$, respectively,
  shows that an analogue of \eqref{eq:sol_concept:vw_phi_supersol} (even with equality) also holds on the $\eps$-level. 
  As the norm is weakly lower semicontinuous, \eqref{conv:naw} (applied with $ζ=\sqrt{\phi_{ww}}$) implies
  \begin{align*}
        \liminf_{j \to \infty} \intntom \phi_{ww}(\vej, \wej) |\nabla \wej|^2 \varphi
    \ge \intntom \phi_{ww}(v, w) |\nabla w|^2 \varphi.
  \end{align*}
  Moreover, according to \eqref{conv:alarmterm}, 
  \begin{align*}
        \lim_{j \to \infty} \intntom \wej \phi_{ww}(\vej, \wej) \nabla (\uej \vej) \cdot \nabla \wej \varphi
    =   \intntom \phi_{ww}(v, w) w \nabla (u v) \cdot \nabla w \varphi.
  \end{align*}
  All other terms converge to their respective counterparts in \eqref{eq:sol_concept:vw_phi_supersol}
  due to similar or straightforward applications of \eqref{eq:conv_init}, Lemma~\ref{lm:eps_sea_0} and Lemma~\ref{lm:strong_grad_conv}.

  Finally, by \eqref{eq:eps_sea_0:w_l1} and possibly after switching to a subsequence,
  there exists a null set $N \subset (0, \infty)$ such that $\intom \wej(\cdot, t) \to \intom w(\cdot, t)$ as $j \to \infty$ for all $t \in (0, \infty) \setminus N$.
  We now claim that \eqref{eq:sol_def:mass_ineq} holds with this choice of $N$.
  Indeed, for any $t \in (0, \infty) \setminus N$, \eqref{eq:eps_sea_0:u_l2}, \eqref{eq:eps_sea_0:v_l2} and \eqref{eq:eps_sea_0:w_l2}
  imply $\we(\lambda_3 + b_2 \ue + b_3 \ve) \rh w(\lambda_3 + b_2 u + b_3 v)$ in $L^1(\Omega \times (0, t))$ as $\eps = \eps_j \sea 0$.
  As additionally \eqref{eq:eps_sea_0:w_l2} and the weak lower semicontinuity of the norm assert $\liminf_{j \to \infty} \intnstom \wej^2 \ge \intnstom w^2$,
  combining these convergence properties, testing the third equation in \eqref{prob:approx} with $1$ and recalling \eqref{eq:conv_init} gives
  \begin{align*}
        \intnstom h(u, v, w)
    &=  \intnstom w(\lambda_3 - \mu_3 w + b_2 u + b_3 v)
     \ge \limsup_{j \to \infty} \intnstom h(\uej, \vej, \wej) \\
    &= \limsup_{j \to \infty} \left( \intom \wej(\cdot, t) - \intom \wnej \right)
    = \intom w(\cdot, t) - \intom w_0,
  \end{align*}
  that is, \eqref{eq:sol_def:mass_ineq} holds.
\end{proof}

\begin{proof}[Proof of Theorem~\ref{th:xi>0}]
  The statement is contained in Lemma~\ref{lm:uvw_gen_sol}.
\end{proof}


\footnotesize

\end{document}